\documentclass[10pt]{article}
\usepackage[utf8]{inputenc}

\usepackage{amsmath,color}
\usepackage{amssymb,amsthm}

\usepackage[permil]{overpic}
\usepackage[multiple]{footmisc}
\usepackage{xr}
\usepackage[left=1.6cm,right=1.6cm,top=2.50cm,bottom=2.50cm]{geometry}
\usepackage{graphicx}
\usepackage[font=small,labelfont=bf,
   justification=justified,
   format=plain]{caption}
\usepackage{subfig}
\usepackage{float}
\usepackage[english]{babel}
\usepackage[font=small,labelfont=bf,justification=centering]{caption}
\usepackage[T1]{fontenc}
\usepackage{lastpage}
\usepackage{enumerate}
\usepackage{enumitem}
\usepackage{lmodern}
\usepackage{array}
\usepackage{bm}
\usepackage{multirow}
\usepackage{dsfont}
\usepackage{tensor}
\usepackage{fancyhdr}
\usepackage{listings}
\usepackage{dsfont}
\usepackage{siunitx}
\usepackage{titling}
\usepackage{lipsum}
\usepackage{tabularx}
\usepackage{verbatim}
\usepackage{authblk}
\usepackage{csquotes}
\usepackage[
backend=biber,
%style=ieee,dashed=false,
style=numeric,
giveninits=true,
sorting=anyt
]{biblatex}

\addbibresource{References.bib}

\graphicspath{{Simulations/}}

% text letters in mathmode

\def\txtd{{\textnormal{d}}}

\def\txti{{\textnormal{i}}}

\def\txtD{{\textnormal{D}}}

% double bracket
\makeatletter
\newsavebox{\@brx}
\newcommand{\llangle}[1][]{\savebox{\@brx}{\(\m@th{#1\langle}\)}%
  \mathopen{\copy\@brx\kern-0.5\wd\@brx\usebox{\@brx}}}
\newcommand{\rrangle}[1][]{\savebox{\@brx}{\(\m@th{#1\rangle}\)}%
  \mathclose{\copy\@brx\kern-0.5\wd\@brx\usebox{\@brx}}}
\makeatother

% Avoid line break of equations
\relpenalty   = 10000
\binoppenalty = 10000

% Caption
\captionsetup{justification=justified,font=scriptsize,singlelinecheck=false}
%\captionsetup[subfigure]{aboveskip=10mm,justification=centering}
\captionsetup[subfloat]{captionskip=13pt,justification=centering}

\sisetup{output-exponent-marker=\textsc{e}, bracket-negative-numbers, open-bracket={\text{-}}, close-bracket={}}
\lstset{
  basicstyle=\ttfamily,
  columns=fullflexible,
  frame=single,
  breaklines=true,
  postbreak=\mbox{\textcolor{red}{$\hookrightarrow$}\space},
}

\setlength\parindent{0pt}

\newtheorem{deff}{Definition}[section]
\newtheorem{prop}[deff]{Proposition}
\newtheorem{thm}[deff]{Theorem}
\newtheorem{lm}[deff]{Lemma}
\newtheorem{cor}[deff]{Corollary}

\title{Bifurcations and Early-Warning Signs for SPDEs \\with Spatial Heterogeneity}

\author[*]{P. Bernuzzi}
\author[*,$\dag$]{C. Kuehn}
\affil[*]{\footnotesize{Technical University of Munich, School of Computation Information and Technology, Department of
Mathematics, Boltzmannstraße 3, 85748 Garching, Germany}}
\affil[$\dag$]{\footnotesize{Munich Data Science Institute, Technical University of Munich}}

\date{\today}

\begin{document}

\maketitle{}

\begin{abstract} 
Bistability is a key property of many systems arising in the nonlinear sciences. For example, it appears in many partial differential equations (PDEs). For scalar bistable reaction-diffusions PDEs, the bistable case even has taken on different names within communities such as Allee, Allen-Cahn, Chafee-Infante, Nagumo, Ginzburg-Landau, $\Phi^4$, Schl\"ogl, Stommel, just to name a few structurally similar bistable model names. One key mechanism, how bistability arises under parameter variation is a pitchfork bifurcation. In particular, taking the pitchfork bifurcation normal form for reaction-diffusion PDEs is yet another variant within the family of PDEs mentioned above. More generally, the study of this PDE class considering steady states and stability, related to bifurcations due to a parameter is well-understood for the deterministic case. For the stochastic PDE (SPDE) case, the situation is less well-understood and has been studied recently. In this paper we generalize and unify several recent results for SPDE bifurcations. Our generalisation is motivated directly by applications as we introduce in the equation a spatially heterogeneous term and relax the assumptions on the covariance operator that defines the noise. For this spatially heterogeneous SPDE, we prove a finite-time Lyapunov exponent bifurcation result. Furthermore, we extend the theory of early warning signs in our context and we explain, the role of universal exponents between covariance operator warning signs and the lack of finite-time Lyapunov uniformity. Our results are accompanied and cross-validated by numerical simulations.
\end{abstract}

% Header
\pagestyle{fancy}
\fancyhead{}
\renewcommand{\headrulewidth}{0pt}
\fancyhead[C]{\textit{Bifurcations and Early-Warning Signs for SPDEs with Spatial Heterogeneity}}

\section{Introduction}
\label{sec:intro}

Scalar PDEs with bistability have been deeply studied within many communities~\cite{allee1949principles,fitzhugh1955mathematical,stommel1961thermohaline,allen1972ground,CI,Henry81}. An algebraically particularly simple form of this class of PDEs is given by
\begin{equation}
\label{eq:CIintro}
\partial_t u(x,t)=\Delta u(x,t)+\alpha u(x,t)-u(x,t)^3
\end{equation}
where $x\in[0,L]$ for some fixed interval length $L>0$, $t\geq0$ and with a parameter $\alpha\in\mathbb{R}$. One natural option is to view \eqref{eq:CIintro} as an initial-boundary value problem endowed with zero Dirichlet conditions on $[0,L]$ and $u(x,0)=u_0(x)$, given $u_0\in H^1_0$, where $H^1_0=H^1_0([0,L],\mathbb{R})$ denotes the usual Sobolev space with one weak derivative in $L^2([0,L],\mathbb{R})$. The main result presented by Chafee-Infante in~\cite{CI} was to study the number of steady states for~\eqref{eq:CIintro} and their respective stability, depending on the value of $\alpha$. More precisely, let us denote by $\{-\lambda_k'\}_{k\in\mathbb{N}\setminus\{0\}}$ the eigenvalues of $\Delta$, the Laplacian operator with Dirichlet conditions, that take the form $-\lambda_k':=-\bigg( \dfrac{\pi k}{L} \bigg)^2$ for $k\in\mathbb{N}\setminus\{0\}$. It has been shown that for $\alpha<\lambda_1'$ the origin in $H^1_0$, i.e. the state being identically equal to zero, is the only steady state of the system and it is asymptotically stable. Conversely, for $\lambda_k'<\alpha\leq\lambda_{k+1}'$ and $k\in\mathbb{N}\setminus\{0\}$, the origin loses its stability and the number of different stationary solutions becomes $2k+1$. Of those, only two are asymptotically stable while the rest are unstable. Specifically, the only stable solutions are the ones that bifurcate from the origin when $\alpha$ crosses $\lambda_1'$. The study of the steady states of more general systems has been presented in detail in~\cite{Henry81} using a geometric approach. For example, the equation
\begin{equation}
\label{eq:hetPDE}
\partial_t u(x,t)=\Delta u(x,t)-g(x)u(x,t)+\alpha u(x,t)-u(x,t)^3
\end{equation}
for a bounded and continuous almost everywhere positive function $g$ and assumptions taken as in the previous case, has been studied. We will refer to~\eqref{eq:hetPDE} as a heterogeneous case due to the additional spatial heterogeneity induced by $g$. We are going to use certain important properties of the classical spatially homogeneous PDE that are inherited in the heterogeneous case, such as the loss of stability by the origin when $\alpha$ crosses $\lambda_1$, for $\{-\lambda_k\}_{k\in\mathbb{N}\setminus\{0\}}$ the eigenvalues of the Schrödinger operator $\Delta-g$. Moreover, the dimension of the unstable manifold of the origin is $k$ when $\lambda_k<\alpha<\lambda_{k+1}$. The existence of an attractor is satisfied for almost all $\alpha$ values by the dissipativity of system~(\cite{Henry81}, Chapter 5).\medskip

Beyond the deterministic PDE dynamics, our second main ingredient are stochastic dynamics techniques as we want to study an SPDE variant of~\eqref{eq:hetPDE}. We start with an introduction to the background in the stochastic case. It is well known that the definition of a stochastic bifurcation is much more complex than in the deterministic case~\cite{arnold1995random,berglund2006noise,kuehn2011mathematical}. For example, in~\cite{CF} it is shown, how an ODE supercritical pitchfork normal form, when perturbed by noise, yields a unique attractor in the form of a singleton despite the ODE being bistable above the deterministic pitchfork bifurcation point. In particular, it was proven how the Lyapunov exponent along such an attractor is strictly negative. Thus, the solutions under the same noise are expected to synchronize asymptotically following the attractor of the random dynamical system described by the problem. This event is called synchronization by noise. Later on, a similar approach was carried over~\cite{Caraballo} for an SPDE variant of the pitchfork normal form, i.e., for a stochastic variant of~\eqref{eq:CIintro}. For the SODE pitchfork case, it was shown that on finite time scales, the bifurcation effect still persists, first in the fast-slow setting \cite{berglund2002pathwise,berglund2006noise}. Afterwards, synchronization by noise was shown to be not instantaneous for the pitchfork SODE case~\cite{callaway}, in the sense that \textit{finite-time Lyapunov exponents} have a positive probability to be positive above the bifurcation threshold. The recent work~\cite{https://doi.org/10.48550/arxiv.2108.11073} has proven, how this result carries over to SPDEs without spatially heterogeneous terms. One of the objectives of this article is to prove that several results can be extended to a generalized, heterogeneous in space, family of SPDEs.\\

The SPDE we study is a reaction-diffusion equation with additive noise
\begin{equation}
\label{main_syst}
\begin{cases}
\txtd u(x,t)=(\Delta u(x,t) + f(x) u(x,t) -u(x,t)^3)~\txtd t +\sigma~\txtd W_t\\
u(\cdot,0)=u_0\in \mathcal{H}\\
\left.u(\cdot,t)\right|_{\partial\mathcal{O}}=0\;\;\;\;,\;\;\;\;\forall t\in\mathbb{R}
\end{cases}
\end{equation}
with $x\in\mathcal{O}:=[0,L]$, $\mathcal{H}:=\{v\in L^2(\mathcal{O}):v(0)=v(L)=0\}$, $\sigma>0$, $\mathcal{V}:=H^1_0(\mathcal{O})$ and $W_t$ a stochastic forcing to be specified below. We will assume $f(x)=\alpha-g(x)$ to be bounded, with $g$ having H\"older regularity $\gamma>\dfrac{1}{2}$ and $g>0$ almost everywhere in $\mathcal{O}$. The stochastic process $W_t$  will be taken as a two-sided $Q$-Wiener process $W_t$, whose covariance operator $Q$ is trace class and self-adjoint. We assume that the eigenvalues of $Q$ satisfy $q_j>0$, for $j\in\mathbb{N}\setminus\{0\}$, and they are associated with eigenfunctions $b_j$ such that $b_j=e_j'$ for $j>D>0$ and some integer $D>0$, with $e_j'$ normalized eigenfunctions of the Laplacian with zero Dirichlet boundary conditions. This assumption leads to the fact that the $b_j$ are finite combinations of $e_k'$ for $j,k\in\{1,...,D\}$. It can be shown (following the steps in \cite[Chapter 7]{DaPrato}) that the SPDE has a $\mathbb{P}-a.s.$ mild solution 
\begin{equation*}
u\in L^2(\Omega\times(0,T);\mathcal{V})\cap L^2(\Omega;\mathcal{C}((0,T);\mathcal{H}))\;\;.
\end{equation*}
The assumptions on $Q$ imply the existence of a compact attractor for the random dynamical system $(\varphi,\theta)$ defined by \eqref{main_syst} as shown in~\cite{debussche}. Furthermore, the attractor $a$ is a singleton for every $\alpha\in\mathbb{R}$; see~\cite{Caraballo}. Such a singleton attractor result is proven using the order-preservation of the SPDE \cite{chueshov,twardowska}.\medskip

This paper proves the behaviour of finite-time Lyapunov exponents (FTLE) on the attractor of \eqref{main_syst} depending on $\alpha$. For $\alpha<\lambda_1$ the FTLE are $\mathbb{P}-a.s.$ negative and for $\alpha>\lambda_1$ there is positive probability for them to be positive. Such a result can also be expanded, following \cite{https://doi.org/10.48550/arxiv.2108.11073}, for the cases where $\alpha$ crosses other eigenvalues of $\Delta-g$. In addition to FTLE, there is another natural way to study small stochastic perturbations near deterministic bifurcations, which is based upon moments. For stochastic ODEs~\cite{berglund2006noise,kuehn2011mathematical,berglund2012hunting,kuehn2013mathematical,berglund2015random}, it is well understood that critical slowing down near a bifurcation induces growth of the covariance. This growth can then be used in applications as an early-warning sign. For our SPDE, we study early warning signs associated to the covariance operator for the solution of the linearized system following the results of \cite{early,kuehn2019scaling}. In particular, we are going to prove a hyperbolic-function divergence of the covariance operator, when approaching a bifurcation, and of the variance in time of the solution $u$ on every point in $\mathring{\mathcal{O}}$. The precise rate of the phenomenon can be described depending on the studied spatial point. We then discuss the estimate of the error due to the linearization of the system and we cross-validate our theoretical results by numerical simulations.

\subsection{Hypothesis and assumptions}

In this paper, the object of study is the following parabolic SPDE with zero Dirichlet and given initial conditions:
\begin{equation}
\label{mainsyst}
\begin{cases}
\txtd u(x,t)=(\Delta u(x,t) + f(x) u(x,t) -u(x,t)^3)~\txtd t +\sigma~\txtd W_t\\
u(\cdot,0)=u_0\in \mathcal{H}\\
\left.u(\cdot,t)\right|_{\partial\mathcal{O}}=0\;\;\;\;,\;\;\;\;\forall t\in\mathbb{R}
\end{cases}\end{equation}
with $x\in\mathcal{O}:=[0,L]$, $\mathcal{H}:=\{v\in L^2(\mathcal{O}):v(0)=v(L)=0\}$, and $\mathcal{V}:=H^1_0(\mathcal{O})$. The stochastic process $W_t$ is a two-sided $Q$-Wiener process in $\mathcal{H}$ with covariance operator $Q$ that is assumed trace class, self-adjoint and positive. The function $f$ will be taken as $f=-g+\alpha$ with $g$ almost everywhere positive and uniformly H\"older regular of order $\dfrac{1}{2}<\gamma\leq1$, in particular it could be uniformly Lipschitz in $[0,L]$, and $\alpha\in\mathbb{R}$. The first equation in \eqref{mainsyst} can written as
\begin{equation}
\label{alternative}
\txtd u(x,t)=(A u(x,t)+\alpha u(x,t) -u(x,t)^3)~\txtd t +\sigma~\txtd W_t\;\;,
\end{equation}
with $A:=\Delta-g$. Therefore, we observe that A is a Schr\"odinger operator whose main properties are collected in Appendix A. In particular, the eigenvalues of $A:=\Delta-g$, $\{-\lambda_k\}_{k\in\mathbb{N}\setminus\{0\}}$, are strictly negative. As described in the introduction above, the eigenvalues and eigenfunctions of $\Delta$ are $-\lambda_k':=-\Big(\dfrac{\pi k}{L}\Big)^2$ and $e_k':=\sqrt{\dfrac{2}{L}}\text{sin}\Big(\dfrac{\pi k}{L} x\Big)$ with $k\in\mathbb{N}\setminus\{0\}$ and $x\in\mathcal{O}$. In Appendix A, it is illustrated how $\{-\lambda_k\}_{k\in\mathbb{N}\setminus\{0\}}$ and the corresponding normalized in $\mathcal{H}$ eigenfunctions of $A$, $\{e_k\}_{k\in\mathbb{N}\setminus\{0\}}$, behave asymptotically in $k$ in comparison to the eigenvalues $\{-\lambda_k'\}_{k\in\mathbb{N}\setminus\{0\}}$ and eigenfunctions $\{e_k'\}_{k\in\mathbb{N}\setminus\{0\}}$ respectively. Both $\Delta$ and $A$ are assumed to be closed operators on $\mathcal{H}$.\\ \medskip
The norm on a Banach space $X$ is denoted as $\lvert\lvert\cdot\rvert\rvert_X$ with the exception of the $L^p$ spaces for which it is noted as $\lvert\lvert\cdot\rvert\rvert_p$ for any $1\leq p\leq \infty$ or for further cases later described in detail. We use $\langle\cdot,\cdot\rangle$ to indicate the standard scalar product in $L^2(\mathcal{O})$. We use the Landau notation on sequences $\{\rho^1_k\}_{k\in\mathbb{N}\setminus\{0\}}\subset\mathbb{R},\;\{\rho^2_k\}_{k\in\mathbb{N}\setminus\{0\}}\subset\mathbb{R}_{>0}$ as $\rho^1_k=O(\rho^2_k)$ if  $0\leq\underset{k\rightarrow\infty}{\lim}\dfrac{\lvert\rho^1_k\rvert}{\rho^2_k}<\infty$.  We define the symbol $\sim$ to indicate $\rho_1\sim \rho_2$ if $0<\underset{\rho_2\rightarrow\infty}{\lim}\dfrac{\rho_1}{\rho_2}<\infty$ and we apply it also on sequences $\{\rho^1_k\}_{k\in\mathbb{N}\setminus\{0\}}\subset\mathbb{R},\;\{\rho^2_k\}_{k\in\mathbb{N}\setminus\{0\}}\subset\mathbb{R}_{>0}$ meaning that $0<\underset{k\rightarrow\infty}{\lim}\dfrac{\rho^1_k}{\rho^2_k}<\infty$.\medskip

Next, we proceed to specify the assumptions on the noise. The probability space that we will use is $(\Omega,\mathcal{F},\mathbb{P})$ with $\Omega:=\mathcal{C}_0(\mathbb{R};\mathcal{H})$ composed of the functions $\omega:\mathbb{R}\rightarrow \mathcal{H}$ such that $\omega(0)=0$ and endowed with the compact-open topology. With $\mathcal{F}$ we denote the Borel sigma-algebra on $\Omega$ and with $\mathbb{P}$ a Wiener measure generated by a two-sided Wiener process. As in \cite{https://doi.org/10.48550/arxiv.2108.11073}, we consider $\omega_t=W_t(\omega)$ and the two-sided filtration $(\mathcal{F}_{t_1}^{t_2})_{t_1<t_2}$ defined as $\mathcal{F}_{t_1}^{t_2}:=\mathfrak{B}(\omega_{t_2}-\omega_{t_1})$. We can then obtain
\begin{equation*} 
\begin{split}
\mathcal{F}_{-\infty}^t&=\mathfrak{B}(\mathcal{F}_{t_1}^t: t_1<t)\\
\mathcal{F}_t^\infty&=\mathfrak{B}(\mathcal{F}_t^{t_1}: t<t_1) \;.
\end{split} 
\end{equation*}
We also introduce the Wiener shift for all $t>0$
\begin{equation*} (\theta^t \omega)_{t_1}:=\omega_{t+t_1}-\omega_{t_1},\;\;\text{for any}\;\; t_1\in\mathbb{R},\omega\in\Omega\;.\end{equation*}
Thus $(\theta^t)_{t\in\mathbb{R}}$ is a family of $\mathbb{P}$-preserving transformations that satisfies the flow property and $(\Omega,\mathcal{F},\mathbb{P},(\theta^t)_{t\in\mathbb{R}})$ is an ergodic metric dynamical system~\cite{chueshov2001inertial}. The covariance operator $Q$ is assumed to have eigenvalues $\{q_k\}_{k\in\mathbb{N}\setminus\{0\}}$ and eigenfunctions $\{b_k\}_{k\in\mathbb{N}\setminus\{0\}}$ with the following properties:
\begin{enumerate}
\item \label{firstproperty} there exists a $D\in\mathbb{N}\setminus\{0\}$ such that $b_j=e_j'$ for all $j>D$,
\item \label{secondproperty} $q_j>0$ for all $j\in \mathbb{N}\setminus\{0\}$,
\item \label{thirdproperty} $\sum_{j=1}^\infty q_j \lambda_j'^\gamma <+\infty$ for a $\gamma>0$.
\end{enumerate}
The first property implies that $b_k$ is a finite combination of $\{e_j'\}_{j\in\{1,...,D\}}$ for $k\leq D$. The choice of $D>2$ can be used to have more freedom on the choice of $Q$ both within analytical and numerical results. As stated in Appendix B, the first and third assumption imply the continuity of the solutions of \eqref{mainsyst} in $\mathcal{V}$ if $u_0\in \mathcal{V}$ and that, for $u_0\in \mathcal{H}$, there exists $\mathbb{P}-a.s.$ a unique mild solution
\begin{equation*}u\in L^2(\Omega\times(0,T);\mathcal{V})\cap L^2(\Omega;\mathcal{C}((0,T);\mathcal{H}))\;\;.\end{equation*}

\subsection{Properties of the determistic case}

The heterogeneous system~\eqref{mainsyst} with $\sigma=0$ inherits some properties from the classical spatially homogeneous PDE:

\begin{prop}
\label{stab}
Set $u_*\in \mathcal{H}$ and let $\mathcal{D}(A)$ be the domain of $A$. Suppose $A:\mathcal{D}(A)\subset \mathcal{H}\longrightarrow \mathcal{H}$ is a sectorial linear operator and $F:U\rightarrow \mathcal{H}$ is a locally Lipschitz operator so that $U$ is a neighbourhood of $u_*$ and we have
\begin{equation*}
F(u_*+v)=F(u_*)+B v + G(v)  
\end{equation*}
with $u_*+v\in U$, $B\in\mathcal{L}_b(\mathcal{H})$ with $\mathcal{L}_b(\mathcal{H})$ denoting bounded linear operators on $\mathcal{H}$, $G:U\rightarrow \mathcal{H}$ and $\| G(u)\|\leq c \| u\|_\mathcal{V}^{\gamma} $ for $\gamma>1$, any $u\in \mathcal{V}$ and a constant $c>0$.\footnote{We use $c$ and $C$ to denote constants that are independent from other variables and parameters, unless clearly stated otherwise.} Assume $0=Au_*+F(u_*)$ and that the spectrum of $A+B$ is in $\{\lambda\in\mathbb{C}:\text{Re}(\lambda)<-\epsilon\}$ for a $\epsilon>0$, then $u_*$ is a locally asymptotically stable steady state of the evolution equation $\dot{u}=Au+F(u)$, where $u(t)=u(\cdot,t)$ and dot denotes the time derivative.
\end{prop}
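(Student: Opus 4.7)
The plan is to reduce the statement to the classical linearized-stability theorem for sectorial operators, in the spirit of \cite{Henry81}. First I would shift coordinates by setting $v:=u-u_*$, so that the evolution equation $\dot u=Au+F(u)$ becomes
$$\dot v=Av+F(u_*+v)-F(u_*)=(A+B)v+G(v),$$
using the stationarity $Au_*+F(u_*)=0$ together with the expansion assumed in the statement. It therefore suffices to prove local asymptotic stability of the zero solution of $\dot v=(A+B)v+G(v)$.

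Because $A$ is sectorial and $B\in\mathcal{L}_b(\mathcal{H})$, the perturbed operator $A+B$ is again sectorial, hence generates an analytic semigroup $T(t):=e^{(A+B)t}$ on $\mathcal{H}$. The spectral assumption $\sigma(A+B)\subset\{\mathrm{Re}\,\lambda<-\epsilon\}$ together with standard sectorial estimates yields, for some $M>0$,
$$\|T(t)\|_{\mathcal{H}\to\mathcal{H}}\leq Me^{-\epsilon t},\qquad \|T(t)\|_{\mathcal{H}\to\mathcal{V}}\leq M t^{-1/2}e^{-\epsilon t},\qquad t>0,$$
where $\mathcal{V}=H^1_0$ is identified with the fractional-power space $\mathcal{D}((-A-B)^{1/2})$.

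With the mild formulation
$$v(t)=T(t)v_0+\int_0^t T(t-s)\,G(v(s))\,\txtd s,$$
I would take the $\mathcal{V}$-norm and insert the hypothesis $\|G(w)\|\leq c\|w\|_\mathcal{V}^{\gamma}$, $\gamma>1$, to obtain
$$\|v(t)\|_\mathcal{V}\leq Mt^{-1/2}e^{-\epsilon t}\|v_0\|_\mathcal{H}+cM\int_0^t (t-s)^{-1/2}e^{-\epsilon(t-s)}\|v(s)\|_\mathcal{V}^{\gamma}\,\txtd s.$$
A short-time continuity argument upgrades $v_0\in\mathcal{H}$ with small $\|v_0\|_\mathcal{H}$ to $v(t_0)\in\mathcal{V}$ with correspondingly small $\|v(t_0)\|_\mathcal{V}$ for any $t_0>0$. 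Setting $\phi(t):=\sup_{t_0\leq s\leq t}e^{\epsilon' s}\|v(s)\|_\mathcal{V}$ for some $0<\epsilon'<\epsilon$, a standard bootstrap based on the integrability of $(t-s)^{-1/2}e^{-(\epsilon-\epsilon')(t-s)}$ shows that if $\|v_0\|_\mathcal{H}$ is sufficiently small, $\phi(t)$ remains uniformly bounded. This forces $\|v(t)\|_\mathcal{V}\to0$ exponentially, and in particular $\|v(t)\|_\mathcal{H}\to0$, which is local asymptotic stability of $u_*$.

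The main obstacle is the singular kernel $(t-s)^{-1/2}$ coming from the $\mathcal{H}\to\mathcal{V}$ smoothing estimate: one must check that the superlinear factor $\|v\|_\mathcal{V}^{\gamma}$ with $\gamma>1$ can be absorbed into the exponential decay produced by the spectral gap $\epsilon>0$. The condition $\gamma>1$ is precisely what makes this absorption possible, and it is where bootstrapping the smallness of $\phi$ closes the Gronwall-type inequality on a sufficiently small ball, delivering the claim.
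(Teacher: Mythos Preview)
Your sketch is correct and is precisely the classical linearized-stability argument for sectorial operators: shift to $v=u-u_*$, use that $A+B$ is sectorial with spectrum in $\{\mathrm{Re}\,\lambda<-\epsilon\}$, apply the analytic-semigroup decay and smoothing estimates to the mild formulation, and close a weighted-supremum bootstrap using $\gamma>1$. The paper does not give its own proof of this proposition but simply refers to \cite[Theorem 5.1.1]{Henry81}, which is exactly the argument you outline.
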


A proof of this standard proposition can be found in \cite[Theorem 5.1.1]{Henry81}. By Proposition \ref{stab} it is clear that the zero function is an asymptotic stable steady state of~\eqref{mainsyst} with $\sigma=0$ when $\alpha<\lambda_1$. In fact, for this parameter range it is also the only steady state by the strict negativity of $A$ and the strict dissipativity given by $F$. When $\alpha>\lambda_1$ the stability setting changes. The next proposition describes such a case; see \cite[Corollary 5.1.6]{Henry81}.

\begin{prop}
\label{unstab}
Set $u_*$, $A$ and $F$ as in Proposition \ref{stab}. Assume again $0=Au_*+F(u_*)$ and that the spectrum of $A+B$ has non-empty intersection with $\{\lambda\in\mathbb{C}:\text{Re}(\lambda)>\epsilon\}$ for an $\epsilon>0$, then $u_*$ is an unstable steady state of $\dot{u}=Au+F(u)$.
\end{prop}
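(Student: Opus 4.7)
The plan is to adapt the classical sectorial-operator instability theorem \cite[Thm.~5.1.3, Cor.~5.1.6]{Henry81} to the present setting. The argument splits into three ingredients: a spectral decomposition adapted to the hyperbolic structure of $A+B$, a mild formulation via variation of constants, and a Chetaev-type cone invariance that forces escape from small neighbourhoods of $u_*$.

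First I would translate the equilibrium to the origin by setting $v:=u-u_*$; using $Au_*+F(u_*)=0$, the evolution equation reduces to $\dot v=(A+B)v+G(v)$ with $v=0$ a fixed point and $\|G(v)\|\leq c\|v\|_\mathcal{V}^{\gamma}$, $\gamma>1$. Sectoriality of $A+B$ is inherited from sectoriality of $A$ via the standard bounded-perturbation theorem. Because $A=\Delta-g$ has compact resolvent (Appendix~A), the unstable set $\sigma_+:=\sigma(A+B)\cap\{\text{Re}(\lambda)>\epsilon\}$ consists of finitely many eigenvalues of finite algebraic multiplicity; a Dunford contour integral around $\sigma_+$ yields a bounded spectral projection $P$ onto a finite-dimensional invariant subspace $E_+\subset\mathcal{H}$, with complement $E_-=(I-P)\mathcal{H}$ also invariant. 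The analytic semigroup $S(t):=e^{t(A+B)}$ respects this splitting and, using finite-dimensionality of $E_+$ together with parabolic smoothing on $E_-$, satisfies
\begin{equation*}
c_+\, e^{\mu t}\|x\|\leq\|S(t)x\|\leq c_+'\, e^{\mu t}\|x\|\quad(x\in E_+),\qquad \|S(t)|_{E_-}\|_{\mathcal{H}\to\mathcal{V}}\leq Ct^{-1/2}e^{\nu t},
\end{equation*}
for some $\mu>\epsilon$ and some $\nu<\mu$.

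Next I would run a cone-invariance (Chetaev) argument. For a small $\kappa>0$ to be tuned, consider the wedge $\Sigma_\kappa:=\{v\in\mathcal{V}:\|Pv\|>\kappa\|(I-P)v\|_\mathcal{V}\}$ collecting the predominantly unstable directions, and the quadratic functional $V(v):=\|Pv\|^2-\kappa^2\|(I-P)v\|_\mathcal{V}^2$. A mild solution satisfying $\|v(s)\|_\mathcal{V}<\rho$ on $[0,t]$ obeys, by variation of constants,
\begin{equation*}
\|Pv(t)\|\geq c_+\|Pv_0\|e^{\mu t}-c_+'\, c\int_0^t e^{\mu(t-s)}\|v(s)\|_\mathcal{V}^{\gamma}\,\txtd s,
\end{equation*}
while the stable component $(I-P)v(t)$ is controlled in $\mathcal{V}$-norm by the parabolic smoothing bound above. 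Because $\gamma>1$, one has $\|v(s)\|_\mathcal{V}^{\gamma}\leq\rho^{\gamma-1}\|v(s)\|_\mathcal{V}$, so that taking $\rho$ sufficiently small renders $\Sigma_\kappa\cap B_\rho^\mathcal{V}$ forward-invariant for the nonlinear flow, with $V(v(t))\geq V(v_0)e^{\mu t}$ thereafter. Picking any initial datum $v_0\in E_+$ with $\|v_0\|_\mathcal{V}<\delta$, however small $\delta$ may be, the orbit must leave $B_\rho^\mathcal{V}$ in finite time, contradicting the definition of stability and thus establishing instability of $u_*$.

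The main technical obstacle, as in every PDE instability proof of this kind, is to reconcile the $\mathcal{H}$-norm natural for the spectral calculus with the $\mathcal{V}$-norm in which $G$ is actually controlled. On the finite-dimensional $E_+$ this is harmless because all norms are equivalent, whereas on $E_-$ one uses the parabolic smoothing $\|S(t)|_{E_-}\|_{\mathcal{H}\to\mathcal{V}}\lesssim t^{-1/2}e^{\nu t}$ and the integrability of $(t-s)^{-1/2}$, which merely modifies the constants. A second, more delicate point is quantitative: one must show that an \emph{arbitrarily} small initial perturbation in the unstable direction still triggers escape from a \emph{fixed} $\rho$-neighbourhood, which is precisely why the wedge formulation is needed rather than a naive lower bound on the linear part of the Duhamel formula alone. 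Both ingredients are standard in the semilinear parabolic literature, so after checking that the hypotheses of Proposition~\ref{stab}, together with compactness of the resolvent of $A$, suffice to invoke them in the present heterogeneous setting, the statement follows as in \cite{Henry81}.
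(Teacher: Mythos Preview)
The paper does not supply its own proof of this proposition; it simply refers the reader to \cite[Corollary~5.1.6]{Henry81}. Your sketch reproduces precisely the classical sectorial-operator instability argument from that reference (spectral splitting via Dunford projection, variation of constants, and a cone/Chetaev invariance), so it is fully aligned with what the paper invokes.
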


The last result can be applied to show that the zero function is unstable for $\alpha>\lambda_1$. Nonetheless, the global dissipativity still implies the existence of a compact attractor~\cite[Section 5.3]{Henry81}. The next theorem, based on \cite{crandall1971bifurcation}, establishes the bifurcation of near steady states from the trivial branch of zero solutions when $\alpha$ crosses a bifurcation point.

\begin{thm}[Crandall-Rabinowitz]
Set $X$ and $Y$ Banach spaces, $u_*\in X$ and $U$ neighbourhood of $(u_*,0)$ in $X\times\mathbb{R}$. Let $F:U\subset X\times\mathbb{R}\rightarrow Y$ be a $\mathcal{C}^3$ function in $(u_*,0)$ with $F(u_*,p)=0$ for all $p\in(-\delta,\delta)$ for $\delta>0$. Suppose the linearization $G:=\txtD_X F(0,0)$ is a Fredholm operator with index zero. Assume that $\dim(\textnormal{Ker}(G))=1$, $\textnormal{Ker}(G)=\textnormal{Span}\{\phi_0\}$ and $\txtD_p \txtD_X F(u_*,0)\phi_0\not\in\textnormal{Range}(G)$. Then $(u_*,0)$ is a bifurcation point and there exists a $\mathcal{C}^1$ curve $s\mapsto (\phi(s),p(s))$, for $s$ in a small interval, that passes through $(u_*,0)$ so that
\begin{equation}\label{cr}
    F(\phi(s),p(s))=0\;.
\end{equation}
In a sufficiently small neighbourhood $U$ the only solutions to $\eqref{cr}$ are the map $s\mapsto(\phi(s),p(s))$ and the trivial curve $\{(u_*,\alpha):(u_*,\alpha)\in U\}$.
\end{thm}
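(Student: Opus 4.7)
The plan is to reduce the equation to a scalar problem in the branch parameter $s$, via a Lyapunov--Schmidt decomposition combined with a rescaling that removes the trivial branch, and then apply the implicit function theorem (IFT). Because $G$ is Fredholm of index zero with one-dimensional kernel $\textnormal{Span}\{\phi_0\}$, I would first fix closed topological complements $X = \textnormal{Span}\{\phi_0\}\oplus X_1$ and $Y = \textnormal{Range}(G)\oplus Y_1$ with $\dim(Y_1)=1$; by the transversality hypothesis one can take $Y_1=\textnormal{Span}\{\txtD_p\txtD_X F(u_*,0)\phi_0\}$, and $G\vert_{X_1}:X_1\to\textnormal{Range}(G)$ is then a Banach-space isomorphism.

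Next I would parameterise nearby candidate solutions by $u = u_* + s(\phi_0 + v)$ with $s\in\mathbb{R}$ and $v\in X_1$, both small. Using the trivial-branch identity $F(u_*,p)\equiv 0$,
\begin{equation*}
F\bigl(u_* + s(\phi_0+v),\,p\bigr) \;=\; s\int_0^1 \txtD_X F\bigl(u_* + ts(\phi_0+v),\,p\bigr)(\phi_0+v)\,\txtd t \;=:\; s\,H(s,v,p).
\end{equation*}
Since $F\in\mathcal{C}^3$ near $(u_*,0)$, the integrand makes $H$ a $\mathcal{C}^2$ map on a neighbourhood of $(0,0,0)$ with $H(0,0,0)=G\phi_0=0$, and the rescaling converts the hunt for nontrivial zeros of $F$ into the hunt for zeros of $H$.

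I would then apply IFT to $H=0$ to solve for $(v,p)$ in terms of $s$. The partial derivative is
\begin{equation*}
L \;:=\; \txtD_{(v,p)}H(0,0,0) \;:\; X_1\times\mathbb{R}\to Y,\qquad (\eta,q)\longmapsto G\eta + q\,\txtD_p\txtD_X F(u_*,0)\phi_0.
\end{equation*}
Injectivity: any $(\eta,q)\in\textnormal{Ker}(L)$ satisfies $q\,\txtD_p\txtD_X F(u_*,0)\phi_0 = -G\eta\in\textnormal{Range}(G)$, so transversality forces $q=0$ and then $\eta\in X_1\cap\textnormal{Ker}(G)=\{0\}$. Surjectivity is immediate because $G\vert_{X_1}$ maps onto $\textnormal{Range}(G)$ and the transversal vector spans the complement $Y_1$. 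Hence $L$ is an isomorphism, and IFT produces a unique $\mathcal{C}^1$ curve $s\mapsto(v(s),p(s))$ with $v(0)=0$, $p(0)=0$, and $H(s,v(s),p(s))=0$. Setting $\phi(s):=u_*+s(\phi_0+v(s))$ yields the claimed branch solving \eqref{cr}.

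The step I expect to require the most care is the local uniqueness statement. After shrinking $U$, every $u$ near $u_*$ admits a unique decomposition $u-u_* = s\phi_0 + w$ with $w\in X_1$; for any solution $(u,p)\in U$ of \eqref{cr} either $s\ne 0$, in which case $v:=w/s$ is still small and IFT uniqueness identifies $(v,p)=(v(s),p(s))$, or $s=0$ and $u-u_* = w\in X_1$ is small. In the latter sub-case the equation reduces to $Gw + r(w,p)=0$ with $r$ of higher order, and invertibility of $G\vert_{X_1}$ forces $w=0$, returning the trivial branch $\{(u_*,p)\}$. The delicate point throughout is verifying that this rescaling captures \emph{every} nearby zero; once that is done, the simple-kernel and transversality hypotheses enter only through the invertibility of $L$, and the rest is a routine IFT application.
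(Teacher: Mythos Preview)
The paper does not actually prove this theorem: immediately after the statement it simply refers the reader to \cite[Lemma 6.3.1, Theorem 6.3.2]{Henry81} and \cite[Theorem 5.1]{kuehn2019pde}. Your argument is the classical Crandall--Rabinowitz proof via Lyapunov--Schmidt reduction and the implicit function theorem, which is precisely the approach taken in those references, so in substance you are supplying what the paper outsources.

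One point deserves a sentence more of justification than you give it. In the uniqueness step you assert that for a nontrivial nearby solution $u=u_*+s\phi_0+w$ with $s\neq0$ the rescaled component $v=w/s$ is still small, so that the IFT uniqueness applies. This does not follow from the decomposition alone; it requires the a priori estimate obtained by projecting $F(u_*+s\phi_0+w,p)=0$ onto $\textnormal{Range}(G)$ and using that $G\vert_{X_1}$ is an isomorphism, which yields $\|w\|\le C\bigl((|s|+\|w\|)^2+|p|(|s|+\|w\|)\bigr)$ and hence $\|w\|/|s|\to0$ as $(s,w,p)\to0$. You correctly flag this as the delicate step, but since it is the only place the argument could fail it is worth writing out explicitly rather than leaving it as a caveat.
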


A proof of the last result can be found in \cite[Lemma 6.3.1, Theorem 6.3.2]{Henry81} and \cite[Theorem 5.1]{kuehn2019pde}. For $Y=\mathcal{H}$, the space $X$ generated by $\{e_k\}_{k\in\mathbb{N}\setminus\{0\}}$, the normalized eigenfunctions of $A$, and given the nonlinear operator 
\begin{equation*}F(u,\lambda_k-\alpha)=Au+\alpha u -u^3\;,\end{equation*} 
then the Crandall-Rabinowitz theorem implies the existence of a bifurcation, which can be shown to be a pitchfork, at $\alpha=\lambda_k$ and in $u_*$ taken as the origin function in $\mathcal{V}$. It can also be proven that the new steady states that arise at $k=1$ are locally asymptotically stable.

\subsection{Random dynamical system and attractor}

The next properties are described in \cite{caraballo2000stability} and we summarize them here as we will need them later on.

\begin{prop}
\label{g1}
There exists a $\theta^t$-invariant subset $\Omega'\subset\Omega$ of full probability measure such that for all $\omega\in\Omega'$ and $t\geq0$ there is a Fréchet differentiable $\mathcal{C}^1$-semiflow on $\mathcal{H}$
\begin{equation*} 
u_0\longmapsto\varphi(t,\omega,u_0)=:\varphi_\omega^t(u_0)
\end{equation*}
that safisfies the following for any $\omega\in\Omega'$:
\begin{enumerate}
\item[(P1)] For all $T>0$ and $u_0\in \mathcal{H}$, the mapping $(\omega,t)\mapsto \varphi_\omega^t(u_0)$ in $\Omega\times[0,T]\mapsto \mathcal{H}$ is the unique pathwise mild solution of $\eqref{mainsyst}$.
\item[(P2)] It satisfies the cocycle property, i.e., for any $t_1,t_2>0$
\begin{equation*}
\varphi_\omega^{t_1+t_2}=\varphi_{\theta^{t_1} \omega}^{t_2} \circ \varphi_\omega^{t_1}\;.
\end{equation*}
\item[(P3)] Fixing $t_2>t_1>0$, for any $u\in \mathcal{H}$ the $\mathcal{H}$-valued random variable $\varphi_{\theta^{t_1} \omega} ^{t_2-t_1}(u)$ is $\mathcal{F}_{t_1}^{t_2}$-measurable.
\end{enumerate}
\end{prop}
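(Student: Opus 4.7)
The plan is to reduce the SPDE \eqref{mainsyst} to a random PDE via an Ornstein--Uhlenbeck transformation, solve that random PDE pathwise by semigroup methods, and then use the resulting solution map to define the cocycle $\varphi$ and verify (P1)--(P3). First I would introduce the stationary $\mathcal{V}$-valued Ornstein--Uhlenbeck process
\[
z(t,\omega) := \int_{-\infty}^t e^{A(t-s)} \sigma \, \txtd W_s(\omega),
\]
i.e.\ the two-sided mild solution of $\txtd z = A z \, \txtd t + \sigma \, \txtd W_t$. Standard covariance computations, combined with assumption~\ref{thirdproperty} and the spectral comparison of Appendix A between the eigenvalues of $A$ and those of $\Delta$, yield quantitative bounds on the temporal increments of $z$ in $\mathcal{V}$. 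Kolmogorov's continuity criterion then places $z$ in $\mathcal{C}(\mathbb{R};\mathcal{V})$ on a $\theta^t$-invariant set $\Omega'\subset\Omega$ of full $\mathbb{P}$-measure, on which the shift identity $z(t+s,\omega) = z(s,\theta^t\omega)$ holds.

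Setting $v := u - z$, the random PDE
\[
\dot{v} = A v + \alpha (v + z) - (v + z)^3, \qquad v(0) = u_0 - z(0,\omega),
\]
carries no noise term and has $z(\cdot,\omega)$ only as a continuous $\mathcal{V}$-valued forcing. Since $A$ is sectorial with strictly negative spectrum, $u \mapsto u^3$ is $\mathcal{C}^\infty$ and globally dissipative on $\mathcal{H}$, and $z(\cdot,\omega)$ is time-continuous, classical semigroup arguments (cf.\ Chapters 3 and 5 of \cite{Henry81}) deliver, for every $\omega\in\Omega'$ and $u_0\in\mathcal{H}$, a unique global mild solution $v(\cdot,\omega,u_0)\in\mathcal{C}([0,\infty);\mathcal{H})$. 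Fréchet $\mathcal{C}^1$-smoothness of $u_0 \mapsto v(t,\omega,u_0)$ follows by differentiating the variation-of-constants formula and contracting in a suitable weighted norm, using smoothness of the cubic nonlinearity. Setting
\[
\varphi^t_\omega(u_0) := v(t,\omega,u_0 - z(0,\omega)) + z(t,\omega)
\]
then produces the desired $\mathcal{C}^1$-semiflow; by construction $\varphi^t_\omega(u_0)$ is the pathwise mild solution of \eqref{mainsyst}, giving (P1).

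For (P2), the shift identity for $z$ implies that both $s \mapsto \varphi^{t_1+s}_\omega(u_0)$ and $s \mapsto \varphi^s_{\theta^{t_1}\omega}(\varphi^{t_1}_\omega(u_0))$ are pathwise mild solutions of \eqref{mainsyst} on $[0,t_2]$ with the common datum $\varphi^{t_1}_\omega(u_0)$ at $s=0$; uniqueness forces them to agree at $s = t_2$. For (P3), one decomposes the integral defining $z$ over $[t_1,t_2]$ into a contribution built only from the Wiener increments on $[t_1,t_2]$, hence $\mathcal{F}_{t_1}^{t_2}$-measurable, plus a term that is absorbed into the deterministic initial datum $u$; the continuous dependence of the random PDE on its forcing and initial data transfers this measurability to $\varphi^{t_2-t_1}_{\theta^{t_1}\omega}(u)$.

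The main technical obstacle is the construction of the $\theta^t$-invariant, full-measure set $\Omega'$ on which $z$ enjoys the $\mathcal{C}(\mathbb{R};\mathcal{V})$ regularity required to make the random PDE well posed: without enough regularity of $z$, the cubic term $(v+z)^3$ would not stay in $\mathcal{H}$ in a controlled way and the pathwise fixed-point argument for $v$ would break down. This step relies crucially on assumption~\ref{thirdproperty} and on the asymptotic spectral comparison from Appendix A between $A$ and $\Delta$, through which one controls fractional Sobolev norms of $z$. Once $\Omega'$ is fixed, the remaining steps are a deterministic adaptation of the argument in \cite{caraballo2000stability} to the spatially heterogeneous operator $A = \Delta - g$, with no fundamentally new ideas required.
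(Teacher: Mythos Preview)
The paper does not prove Proposition~\ref{g1} at all: it simply states ``The next properties are described in \cite{caraballo2000stability} and we summarize them here as we will need them later on,'' and leaves it at that. Your sketch is precisely the standard route taken in that reference and in the closely related works the paper cites (\cite{debussche}, \cite{crauel2000white}): subtract an Ornstein--Uhlenbeck process to obtain a pathwise random PDE, solve it by semigroup methods, and read off the cocycle and measurability properties. Indeed, the paper itself carries out exactly this transformation later in the proof of Lemma~\ref{Lemma1}, and the regularity of $z$ in $\mathcal{V}$ that you identify as the key technical point is the content of Appendix~B (Lemma~\ref{contlm}). So your proposal is correct and aligned with the intended argument; there is nothing to compare against because the paper offers no alternative.

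One small remark on (P3): your phrasing about ``a term absorbed into the deterministic initial datum $u$'' is a bit loose, since the stationary process $z(0,\theta^{t_1}\omega)$ genuinely depends on the whole past of $\omega$. The cleanest way to see (P3) is to bypass the stationary $z$ altogether and observe directly from the mild formulation that $\varphi^{t_2-t_1}_{\theta^{t_1}\omega}(u)$ is a measurable functional of the increments $\{W_{t_1+s}-W_{t_1}:0\le s\le t_2-t_1\}$ only; the dependence on the past that appears through $z$ cancels exactly because $u_0\mapsto u_0 - z(0,\omega)$ and $v\mapsto v+z(t,\omega)$ are inverse shifts. This is a cosmetic point, not a gap.
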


The pair of mappings $(\theta,\varphi)$ is the random dynamical system (RDS) generated by $\eqref{mainsyst}$. Also, property (P3) implies that the process $u_t$, defined by $u_t:=\varphi_\omega^t(u_0)$ for generic $\omega\in\Omega'$, is $\mathcal{F}_0^t$-adapted. For any $\mathcal{C}^1$ Fréchet differentiable mapping $\varphi:\mathcal{H}\rightarrow \mathcal{H}$, we denote by $\txtD_u\varphi\in \mathcal{L}(\mathcal{H})$ the Fréchet derivative of $\varphi$ in $u\in \mathcal{H}$, for $\mathcal{L}(\mathcal{H})$ that indicates the space of linear operators on $\mathcal{H}$. The next proposition, regarding the Fréchet derivative, is obtained by \cite[Lemma 4.4]{debussche}. 

\begin{prop}
\label{g2}
For all $T>0$ and $u_0,v_0\in \mathcal{H}$ and given the Fréchet differentiable $\mathcal{C}^1$ semiflow described in Proposition \ref{g1}, we have that the mapping $\Omega\times[0,T]\mapsto \mathcal{H}$ given by $(\omega,t)\mapsto \txtD_{u_0} \varphi_\omega^t(v_0)$ is the unique solution of the first variation evolution equation along $u$ with initial conditions set on $u_0$ and $v_0$:
\begin{equation}\label{vareq} 
\begin{cases}\txtd v=(\Delta v + f v - 3 u^2 v)~\txtd t \\
u(\cdot,0)=u_0\\
v(\cdot,0)=v_0\;. \end{cases}
\end{equation}
\end{prop}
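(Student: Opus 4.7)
My plan is to verify that the candidate process $v(t) := \txtD_{u_0}\varphi^t_\omega(v_0)$ solves the mild form of \eqref{vareq} by differentiating the mild formulation of \eqref{mainsyst} with respect to the initial datum, and then to establish uniqueness by a Gr\"onwall estimate. Let $S(t):=e^{t\Delta}$ denote the analytic semigroup generated by the Dirichlet Laplacian and $Z(t,\omega) := \int_0^t S(t-s)\,\txtd W_s$ the stochastic convolution. The mild solution reads
\begin{equation*}
\varphi^t_\omega(u_0) = S(t)u_0 + \int_0^t S(t-s)\bigl[f\,\varphi^s_\omega(u_0) - \varphi^s_\omega(u_0)^3\bigr]\,\txtd s + \sigma Z(t,\omega).
\end{equation*}
Since $Z$ does not depend on $u_0$ and $u_0\mapsto\varphi^t_\omega(u_0)$ is Fr\'echet $\mathcal{C}^1$ on $\mathcal{H}$ by Proposition \ref{g1}, applying $\txtD_{u_0}$ (acting on $v_0$) and interchanging derivative and integral gives
\begin{equation*}
v(t) = S(t)v_0 + \int_0^t S(t-s)\bigl[f v(s) - 3\,\varphi^s_\omega(u_0)^2\,v(s)\bigr]\,\txtd s,
\end{equation*}
which is precisely the mild form of \eqref{vareq}.

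Next I would rigorously justify that $v$ thus defined is indeed the Fr\'echet derivative. Properties \ref{firstproperty} and \ref{thirdproperty} of $Q$, combined with the regularity facts summarised in Appendix B, imply that $\varphi^\cdot_\omega(u_0) \in \mathcal{C}([0,T];\mathcal{V})$ almost surely. Since $\mathcal{V} = H^1_0(\mathcal{O}) \hookrightarrow L^\infty(\mathcal{O})$ in one spatial dimension, the multiplicative coefficient $f - 3\,\varphi^s_\omega(u_0)^2$ is essentially bounded on $\mathcal{O}\times[0,T]$, so the operator $w\mapsto [f - 3\varphi^s_\omega(u_0)^2]w$ is bounded on $\mathcal{H}$ uniformly in $s\in[0,T]$. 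Defining the remainder $R_h(t):=\varphi^t_\omega(u_0+h v_0)-\varphi^t_\omega(u_0)-h v(t)$, substituting the mild formulation and factoring the cubic via $a^3-b^3=(a-b)(a^2+ab+b^2)$ produces a linear integral inequality for $\|R_h(t)\|_\mathcal{H}$ with an integrable kernel; Gr\"onwall's lemma then yields $\|R_h(t)\|_\mathcal{H}=o(h)$ uniformly on $[0,T]$, confirming that $v$ is the Fr\'echet derivative.

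Uniqueness for \eqref{vareq} is then immediate from linearity: the difference $w:=v_1-v_2$ of two mild solutions with the same initial datum satisfies $w(t)=\int_0^t S(t-s)[f-3\,\varphi^s_\omega(u_0)^2]w(s)\,\txtd s$ with $w(0)=0$, and a standard Gr\"onwall argument based on the uniform $L^\infty$-bound on the coefficient forces $w\equiv0$. The main technical obstacle is precisely securing this $L^\infty$-in-space control of the cubic coefficient, since the nonlinearity $u^3$ is not globally Lipschitz on $\mathcal{H}$; once the pathwise $\mathcal{V}$-regularity of $\varphi^\cdot_\omega(u_0)$ guaranteed by the noise assumptions is combined with the one-dimensional Sobolev embedding $\mathcal{V}\hookrightarrow L^\infty(\mathcal{O})$, the remainder of the argument is routine and the whole proof can be compressed into the reference to \cite[Lemma 4.4]{debussche}.
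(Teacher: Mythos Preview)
Your sketch is essentially the standard argument and is correct in outline; note, however, that the paper does not supply its own proof of this proposition at all --- it simply records that the result ``is obtained by \cite[Lemma 4.4]{debussche}'' and moves on, so there is no proof in the paper to compare against beyond the very reference you yourself invoke at the end.

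One small imprecision worth flagging: you assert $\varphi^\cdot_\omega(u_0)\in\mathcal{C}([0,T];\mathcal{V})$ almost surely, but Appendix~B only guarantees this when $u_0\in\mathcal{V}$; for general $u_0\in\mathcal{H}$ the paper records merely $u\in L^2(\Omega\times(0,T);\mathcal{V})\cap L^2(\Omega;\mathcal{C}((0,T);\mathcal{H}))$. This does not break your argument, since $u\in L^2(0,T;\mathcal{V})\hookrightarrow L^2(0,T;L^\infty(\mathcal{O}))$ gives $\|u(s)\|_\infty^2\in L^1(0,T)$, and an integrable-in-time coefficient is all Gr\"onwall needs --- but the uniform $L^\infty$ bound you claim is stronger than what is actually available for arbitrary $u_0\in\mathcal{H}$.
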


Next, we present the relevant result for the existence and uniqueness of an invariant measure. The next lemma and proposition \cite[Chapter 8]{cerrai} are important for following results and are implied by Lemma \ref{contlm} and the strict positivity of the eigenvalues of $Q$.

\begin{lm}
\label{implm}
The transition semigroup of the system $\eqref{mainsyst}$ satisfies the strong Feller property and irreducibility.
\end{lm}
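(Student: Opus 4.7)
The plan is to establish the strong Feller property and irreducibility separately, following the framework developed in Cerrai's treatment of reaction-diffusion SPDEs with polynomial nonlinearity and non-degenerate additive noise, specialized to the heterogeneous operator $A=\Delta-g$ at hand.

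For the strong Feller property, I would rely on a Bismut-Elworthy-Li type identity. Writing $P_t \phi(u_0) := \mathbb{E}[\phi(\varphi^t_\omega u_0)]$ for bounded measurable $\phi$ and using the variational equation \eqref{vareq} from Proposition \ref{g2}, the identity reads, for $h\in\mathcal{H}$,
\begin{equation*}
\sigma\,\langle \txtD_{u_0} P_t \phi(u_0),\, h\rangle \;=\; \frac{1}{t}\,\mathbb{E}\!\left[\phi(\varphi^t_\omega u_0)\int_0^t \langle Q^{-1/2}\, \txtD_{u_0}\varphi^s_\omega h,\,\txtd W_s\rangle\right].
\end{equation*}
Since $q_j>0$ for every $j$, the pseudo-inverse $Q^{-1/2}$ is densely defined; the essential step is then an a priori $L^2(\Omega\times(0,t);\mathcal{H})$ bound on $Q^{-1/2}\txtD_{u_0}\varphi^s_\omega h$ uniform on bounded sets of $u_0$, obtained by combining a Gronwall estimate for \eqref{vareq} with the dissipative bounds on $\|u\|_{L^\infty}$ coming from the cubic term $-u^3$. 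This yields $|\txtD_{u_0} P_t \phi(u_0)|\leq C(t)\|\phi\|_\infty$, and hence the strong Feller property.

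For irreducibility, the strategy is a control argument coupled with a Stroock--Varadhan type support statement. Given $u_0,v\in\mathcal{H}$, $T>0$ and $\varepsilon>0$, I would construct a smooth path $\psi\in\mathcal{C}^1([0,T];\mathcal{V})$ with $\psi(0)=u_0$ and $\|\psi(T)-v\|_\mathcal{H}<\varepsilon/2$, and take the formal control
\begin{equation*}
\sigma\,\dot h(t) \;:=\; \dot\psi(t) - \Delta\psi(t) - f(x)\psi(t) + \psi(t)^3.
\end{equation*}
Because $q_j>0$ for every $j$, the range of $Q^{1/2}$ is dense in $\mathcal{H}$, so $\dot h$ can be approximated in the Cameron--Martin norm of $W$ by elements in the image of $Q^{1/2}$. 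Invoking the continuity statement of Lemma \ref{contlm} to transfer closeness of the controlled trajectory to closeness of the stochastic trajectory, together with the fact that $W$ assigns positive probability to every open neighbourhood of any element of its Cameron--Martin space, then yields $\mathbb{P}(\|\varphi^T_\omega u_0-v\|_\mathcal{H}<\varepsilon)>0$, which is the required irreducibility.

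The main obstacle is the Bismut--Elworthy--Li step: since $Q^{-1/2}$ is unbounded, controlling $\|Q^{-1/2}\txtD_{u_0}\varphi^s_\omega h\|_\mathcal{H}$ requires both the dissipative structure of the SPDE (to prevent blow-up of $\|u\|_{L^\infty}$ entering the variational equation) and the spectral compatibility between $Q$ and $\Delta$ for indices $j>D$ (so that the pseudo-inverse is effectively diagonal and controllable on the relevant tail of modes). The irreducibility part, by comparison, is much softer and reduces to continuity plus density of $\mathrm{Range}(Q^{1/2})$ in $\mathcal{H}$.
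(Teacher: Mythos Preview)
The paper does not give its own proof of this lemma; it invokes Chapter~8 of Cerrai's monograph and records that the hypotheses needed are Lemma~\ref{contlm} together with $q_j>0$ for all $j$. Your outline---Bismut--Elworthy--Li for the strong Feller property, a control-plus-support argument for irreducibility---is exactly the strategy underlying Cerrai's proofs, so you and the paper are pointing to the same argument.

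There is, however, a real gap in your strong Feller sketch. You assert that the $L^2$ bound on $Q^{-1/2}\,\txtD_{u_0}\varphi^s_\omega h$ is ``obtained by combining a Gronwall estimate for \eqref{vareq} with the dissipative bounds on $\|u\|_{L^\infty}$''. A Gronwall argument on the variational equation controls $\|v_s\|_{\mathcal{H}}$ (or, with more work, $\|v_s\|_{\mathcal{V}}$), but says nothing about $\|Q^{-1/2} v_s\|_{\mathcal{H}}$: the operator $Q^{-1/2}$ is unbounded and its domain is not preserved by the variational flow. What Cerrai actually uses is a null-controllability estimate of the form $\|Q^{-1/2}e^{tA}\|_{\mathcal{L}(\mathcal{H})}\le C\,t^{-\gamma}$ for some $\gamma\in[0,1)$; one then writes $v_s = e^{sA}h + \int_0^s e^{(s-r)A}(-3u_r^2 v_r)\,\txtd r$, applies $Q^{-1/2}$ term by term, and exploits the integrable singularity $s^{-\gamma}$. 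The ``spectral compatibility on the tail'' you mention is an ingredient in checking that estimate (it lets one compare $Q^{-1/2}$ with powers of $-A$), but it is not the whole story, and a Gronwall bound plays no role in this step. Your irreducibility sketch is essentially sound; the only minor correction is that the continuity you need is that of the It\^o map $\omega\mapsto\varphi^T_\omega(u_0)$ in the sup-norm on driving paths, which comes from the mild-solution representation together with a cut-off of the cubic nonlinearity, not directly from Lemma~\ref{contlm}.
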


By Krylov-Bogoliubov Theorem, for the existence, Doob's Theorem and by Khas'minskii's Theorem~\cite{khas1960ergodic,da1996ergodicity} we obtain the next proposition.

\begin{prop}
\label{impprop}
There exists a unique stationary measure $\mu$ for the process $u_t$ which is equivalent to any transition probability measure $P_t(u_0,\cdot)$ for all $t>0$ and $u_0\in \mathcal{H}$, in the sense that they are mutually absolutely continuous. It is defined as
\begin{equation*}
P_t(u_0,\mathcal{B}):=\mathbb{P}(\varphi_\omega^t(u_0)\in \mathcal{B}) \;\;\; 
\text{for}\; u_0\in \mathcal{H}\;,\; t>0\;,
\end{equation*}
for any $\mathcal{B}\in \mathfrak{B}(\mathcal{H})$, the sigma-algebra of Borelian sets in $\mathcal{H}$, and it is concentrated on $\mathcal{V}$.
\end{prop}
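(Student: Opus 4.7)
The plan is to combine the three classical theorems cited just before the statement with the properties of the SPDE already established in the excerpt. The bulk of the analytic work (strong Feller and irreducibility) has already been isolated in Lemma \ref{implm}, so what remains is essentially a sequence of applications of abstract results from Markov semigroup theory.

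First I would establish existence of an invariant measure via Krylov-Bogoliubov. The dissipativity induced by the cubic nonlinearity $-u^3$, together with the trace-class assumption on $Q$, yields a uniform-in-time bound on $\mathbb{E}\|\varphi_\omega^t(u_0)\|_{\mathcal{H}}^2$ and in fact a stronger $\mathcal{V}$-bound for $t$ bounded away from zero. Using the compact embedding $\mathcal{V}\hookrightarrow\mathcal{H}$, the family of time averages
\begin{equation*}
\mu_T(\mathcal{B}) := \frac{1}{T}\int_0^T P_t(u_0,\mathcal{B})\,\txtd t
\end{equation*}
is tight on $\mathcal{H}$, and any weak limit point is an invariant probability measure $\mu$. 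Alternatively, the compact attractor for the RDS $(\varphi,\theta)$ already mentioned in the introduction supplies the required tightness directly.

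Next, both the uniqueness of $\mu$ and the mutual equivalence assertion follow from Doob's theorem. By Lemma \ref{implm} the transition semigroup is both strong Feller and irreducible, which is precisely the hypothesis as stated in \cite{da1996ergodicity}: it implies that $\mu$ is the unique invariant probability measure on $\mathcal{H}$ and that for every $t>0$ and every $u_0\in\mathcal{H}$ the measures $P_t(u_0,\cdot)$ and $\mu$ are mutually absolutely continuous. Khas'minskii's theorem is used here to upgrade the local equivalence coming from regularity of $P_t$ to equivalence on the whole state space.

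Finally, to show that $\mu$ is concentrated on $\mathcal{V}$, I would exploit the smoothing effect of the analytic semigroup generated by $A$. Standard mild-solution estimates, of the type summarised in Appendix B and \cite[Chapter 7]{DaPrato}, yield that for any $u_0\in\mathcal{H}$ and any $t>0$ the solution $\varphi_\omega^t(u_0)$ belongs to $\mathcal{V}$ almost surely, so $P_t(u_0,\mathcal{V})=1$. Combining with invariance, $\mu(\mathcal{V})=\int P_t(u_0,\mathcal{V})\,\txtd\mu(u_0)=1$. The main technical obstacle in this outline is really Lemma \ref{implm}; once strong Feller and irreducibility are in hand, the remainder of the argument is classical bookkeeping from Markov semigroup theory.
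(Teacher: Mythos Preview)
Your proposal is correct and follows exactly the route the paper takes: the paper's entire proof consists of the one-line citation of Krylov--Bogoliubov for existence, Doob's theorem and Khas'minskii's theorem for uniqueness and equivalence, relying on Lemma~\ref{implm} for the strong Feller and irreducibility hypotheses. Your write-up is simply a fleshed-out version of that citation, including the standard argument for concentration on $\mathcal{V}$ via the smoothing of the semigroup, so there is nothing to add or correct.
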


Finally, we can consider the random attractor itself.

\begin{deff}
A \textbf{global random attractor} of an RDS $(\varphi,\theta)$ is a compact random set $\mathcal{A}\subset \mathcal{H}$, i.e., it depends on $\omega\in\Omega'$ and satisfies:
\begin{itemize}
\item it is invariant under the RDS, which means $\varphi_\omega^t(\mathcal{A}(\omega))=\mathcal{A}(\theta^t \omega)$;
\item it is attracting so that for every bounded set $\mathcal{B}\subset \mathcal{H}$ 
\begin{equation*}\lim_{t\rightarrow\infty} \| \varphi^t_{\theta^{-t}\omega}(\mathcal{B})-\mathcal{A}(\omega) \|_\mathcal{H} =0
\end{equation*}
for $\mathbb{P}$-a.s.~in $\omega$.
\end{itemize}
\end{deff}

We can use Proposition \ref{impprop} to prove a strong property of the random attractor.

\begin{prop}
For any $\alpha\in\mathbb{R}$ the random dynamical system generated by $\eqref{mainsyst}$ has a global random attractor $\mathcal{A}$ that is a singleton, i.e., there exists a random variable $a:\Omega\rightarrow \mathcal{H}$ such that $\mathcal{A}(\omega)=a(\omega)$ $\mathbb{P}$-a.s., and it is $\mathcal{F}_{-\infty}^0$-measurable. The law of the attractor $a$ is the unique invariant measure of the system.
\end{prop}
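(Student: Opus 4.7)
The plan is to split the statement into three parts and handle each in turn: existence of a compact global random attractor, the singleton property via order-preservation, and the identification of its law with the unique invariant measure from Proposition \ref{impprop}.

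First I would establish the existence of a compact global attractor. The standard approach is to rewrite the mild solution by means of the Ornstein--Uhlenbeck process $z_t$ associated with $\txtd z = A z\,\txtd t + \sigma\,\txtd W_t$, so that $v := u - z$ solves a random PDE with no noise term. The assumptions on $Q$ (finitely many non-Laplacian eigenfunctions, trace-class with $\sum q_j \lambda_j'^{\gamma}<\infty$) guarantee that $z_t$ is a tempered process in $\mathcal{V}$. Then the cubic dissipation $-u^3$ together with the boundedness of $f$ yields standard energy estimates $\tfrac{1}{2}\tfrac{\txtd}{\txtd t}\|v\|^2 \le -c\|v\|_{\mathcal{V}}^2 + R(\theta^t\omega)$, producing a tempered absorbing ball. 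Compactness of the absorbing set in $\mathcal{H}$ follows from regularization in $\mathcal{V}\hookrightarrow\mathcal{H}$; the general framework in \cite{debussche} then delivers a global random attractor $\mathcal{A}(\omega)$. This is essentially a citation, so I would only sketch the idea and defer to that reference.

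The crux is the singleton property, for which I would invoke the order-preservation of the scalar SPDE. Because the noise in \eqref{mainsyst} is additive, the difference $w:=u^{(1)}-u^{(2)}$ of any two solutions with the same realization $\omega$ obeys the deterministic random PDE
\begin{equation*}
\partial_t w = \Delta w + f(x) w - \bigl((u^{(1)})^2+u^{(1)}u^{(2)}+(u^{(2)})^2\bigr) w,
\end{equation*}
with a non-positive zeroth-order coefficient after the cubic factor is bounded below and absorbed; a standard parabolic maximum principle (\cite{chueshov,twardowska}) yields that $u_0\le v_0$ pointwise implies $\varphi_\omega^t(u_0)\le \varphi_\omega^t(v_0)$ for every $t\ge 0$. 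Taking two initial data $u_0,v_0$ in the absorbing set and sandwiching them between $\min(u_0,v_0)$ and $\max(u_0,v_0)$, order-preservation plus dissipativity forces $\|\varphi^t_{\theta^{-t}\omega}(u_0)-\varphi^t_{\theta^{-t}\omega}(v_0)\|_{\mathcal{H}}\to 0$ as $t\to\infty$. Combined with the attractor's non-emptiness this collapses $\mathcal{A}(\omega)$ to a single point $a(\omega)$. The main obstacle is making the contraction argument quantitative enough to cover arbitrary pairs on the attractor (not just comparable ones); the standard trick is to insert a ``sup--inf'' of the two trajectories in the ordering and use that the monotone envelopes are themselves attracted.

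Finally, measurability and invariance are now soft. Since $\mathcal{A}(\omega)=\lim_{t\to\infty}\varphi^t_{\theta^{-t}\omega}(u_0)$ in $\mathcal{H}$, property (P3) of Proposition \ref{g1} shows that the approximants are $\mathcal{F}_{-t}^{0}$-measurable, so the limit $a(\omega)$ is $\mathcal{F}_{-\infty}^{0}$-measurable. Invariance $\varphi_\omega^t(a(\omega))=a(\theta^t\omega)$ combined with the $\theta$-invariance of $\mathbb{P}$ shows that the law $\mu_a$ of $a$ is stationary for the Markov semigroup $P_t$. By Proposition \ref{impprop} the invariant measure is unique, so $\mu_a=\mu$, concluding the proof.
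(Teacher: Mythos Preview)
Your proposal is correct and follows essentially the same route as the paper: existence of the attractor via \cite{debussche}, the singleton property via order-preservation as in \cite{Caraballo,chueshov,twardowska}, $\mathcal{F}_{-\infty}^0$-measurability from the pullback construction, and identification of the law with the unique invariant measure from Proposition \ref{impprop}. The paper's own proof is in fact just a string of citations to these references, so your outline is a more fleshed-out version of the same argument.
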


The proof of this proposition follows the steps of \cite[Theorem 6.1]{Caraballo}. In particular, the existence of the global random attractor $\mathcal{A}$ is the result of \cite[Theorem 4.5]{debussche} and its $\mathcal{F}_{-\infty}^0$-measurability can be derived from \cite[Theorem 3.3]{crauel2000white}. Another important property is the order-preservation of the system~\eqref{mainsyst}, i.e., for two initial conditions $u_1(x,0)\leq u_2(x,0)$ for almost all $x\in\mathcal{O}$ the solutions satisfy $u_1(x,t)\leq u_2(x,t)$ for almost all $x\in\mathcal{O}$ and $t>0$. This is a consequence of \cite[Theorem 5.1]{twardowska} and \cite[Theorem 5.8]{chueshov}.\medskip

The main result of the paper \cite{https://doi.org/10.48550/arxiv.2108.11073} is the description of the possible influences of the attractor $a$ on close solutions, for different values of $\alpha$. This can be expressed by the sign of the leading finite-time Lyapunov exponent (FTLE).

\begin{deff}
The (leading) \textbf{finite-time Lyapunov exponents}, at a time $t>0$ and sample $\omega\in\Omega$, of an RDS $(\varphi,\theta)$ with Fréchet differentiable semiflow $\varphi$ is defined by the following equation:
\begin{equation*} 
\mathfrak{L}_1(t;\omega,u):=\dfrac{1}{t}\log \| \txtD_{u} \varphi^t_\omega \|_\mathcal{H}
\end{equation*}
for any $u\in \mathcal{H}$.
\end{deff}

The FTLE describes the influence of the linear operator $\txtD_{u} \varphi^t_\omega$, for given $\omega\in\Omega$, $u\in \mathcal{H}$ and $t>0$, on elements of $\mathcal{H}$ close to $u$. A positive FTLE indicates that close functions in $\mathcal{H}$, that are near $u$, tend to separate in time $t$. Conversely, a negative FTLE indicates the distance between elements in a neighbourhood of $u$ tends to be smaller after time $t$. Of great interest is the FTLE on the attractor, defined as 
\begin{equation*} 
\mathfrak{L}_1(t;\omega):=\mathfrak{L}_1(t;\omega,a(\omega)).
\end{equation*}
By the subadditive ergodic theorem there exists a limit $t\rightarrow \infty$ of $\mathfrak{L}_1(t;\omega)$ for $\omega-\mathbb{P}$-a.s., referred to as the Lyapunov exponent of $a(\omega)$.\footnote{From the synchronization of the system it would be expected that the Lyapunov exponent is non-positive with probability $1$. This is still an open problem.}\medskip

Next, we comment on extension to further bifurcations beyond the first pitchfork bifurcation point. As previously described, any $\alpha\in\{\lambda_j\}_{j\in\mathbb{N}\setminus\{0\}}$ is a deterministic bifurcation threshold of the system for which, when crossed, two new steady states appear and the origin in $\mathcal{H}$ increases dimension of its unstable manifold by one. To study this case we make use of wedge products. For a separable Hilbert space $\mathcal{H}$ and for $v_1,...,v_k\in \mathcal{H}$, the wedge product ($k$-blade) is denoted by
\begin{equation*} 
v_1\wedge...\wedge v_k.
\end{equation*}
We define the following scalar product for all $v_1,...,v_k,w_1,...,w_k\in \mathcal{H}$,
\begin{equation} 
\label{other scal}
(v_1\wedge...\wedge v_k, w_1\wedge...\wedge w_k):=\det[(v_{j_1},w_{j_2})_{j_1,j_2}]
\end{equation}
with $(v_{j_1},w_{j_2})_{j_1,j_2}$ denoting the $k\times k$ matrix with $j_1,j_2$-th element $(v_{j_1},w_{j_2})$. The set $\wedge^k \mathcal{H}$ is the closure of finite linear combinations of $k$-blades under the norm defined by such inner product. Given $\{e_j\}_{j\in\mathbb{N}\setminus\{0\}}$ as a basis of $\mathcal{H}$, we can define a basis for $\wedge^k \mathcal{H}$ whose elements are the $k$-blades
\begin{equation*} 
\textbf{e}_{\textbf{i}}:=e_{i_1}\wedge...\wedge e_{i_k} 
\end{equation*}
with $\textbf{i}=(i_1,...,i_k)$ for $0<i_1<...<i_k$. Given $B\in \mathcal{L}(\mathcal{H})$ we can obtain an operator in $\mathcal{L}(\wedge^k \mathcal{H})$ by the following operation:
\begin{equation*} 
\wedge^k B(v_1\wedge...\wedge v_k):=Bv_1\wedge...\wedge Bv_k,
\end{equation*}
for all $v_1,...,v_k\in \mathcal{H}$. Such operators satisfy the property\footnote{The operator $\left. B\right |_E:E\rightarrow B(E)$ is assumed to be a linear operator on a finite-dimensional inner product space and we set $\det(\left. B\right |_E)=0$ for $E$ such that $\text{dim}(E)<k$.}
\begin{equation*} 
\| \wedge^k B \|_{\wedge^k \mathcal{H}}=\max\{\lvert \det(\left. B\right |_E)\lvert\;:\; E\subset \mathcal{H},\; \text{dim}(E)=k\}\;. 
\end{equation*}
The last definitions permit us to introduce
\begin{equation*}
\begin{split} &\mathfrak{L}_k(t;\omega,u):=\dfrac{1}{t}\log \| \wedge^k \txtD_u \varphi_\omega^t \|_{\wedge^k \mathcal{H}} \\
& \mathfrak{L}_k(t;\omega):=\mathfrak{L}_k(t;\omega,a(\omega)). 
\end{split}\end{equation*}
Such functions describe the behaviour of volumes defined by the position of elements of $\mathcal{H}$ in a neighbourhood of $u\in \mathcal{H}$ or of the attractor. 

%Their construction is justified by the fact that the number of positive eigenvalues of $A+\alpha$ describe the dimension of unstable manifolds of the origin. Since in the deterministic Chafee-Infante the null function is either an unstable equilibrium situated between the two stable stationary solutions or is the unique stable equilibrium, it is expected to be approached by the attractor in the stochastic setting. Thus $\mathfrak{L}_k$ is able to detect the distancing of close functions in $k$ directions.

%TODO: CK to continue here

\section{Bounds for FTLEs}

The proofs of the theorems considered in this section build upon the approach described in \cite{https://doi.org/10.48550/arxiv.2108.11073}. The main results are Theorems \ref{tm1} and \ref{tm2}. Theorem \ref{tm1} gives an upper bound to $\mathfrak{L}_k$ for any value of $\alpha$. It follows from this result that for $k=1$ and $\alpha<\lambda_1$ the FTLE is almost surely negative. Theorem \ref{tm2} provides a lower bound to the highest admissible value that $\mathfrak{L}_k$ can assume. Specifically, it shows that there is a positive probability of $\mathfrak{L}_k$ to be positive for $\alpha$ beyond the $k$-th bifurcation. 

%One of the preliminary results, Lemma \ref{Lemma1}, proves that the attractor can be in a neighbourhood of the null function with positive probability. In such a set the solutions are less influenced by the dissipative term and the instability of the deterministic equilibrium can bifurcate close solutions.

\subsection{Upper bound}
\begin{thm}\label{tm1}
For any $k\geq 1$,
\begin{equation*}\mathfrak{L}_k(t;\omega)\leq\sum_{j=1}^k\big(\alpha-\lambda_j \big)\end{equation*}
with probability $1$ for all  $t>0$.
\end{thm}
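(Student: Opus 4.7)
The plan is to bound $\|\wedge^k D_u \varphi_\omega^t\|_{\wedge^k \mathcal{H}}$ uniformly in $\omega$, using the variational equation from Proposition~\ref{g2} and a volume-growth / Liouville-type argument applied to the self-adjoint linearization $L(s):=A+\alpha-3u(s)^2$. First I would note that for fixed $\omega$ and fixed $k$-dimensional subspace $E\subset\mathcal{H}$, if $v_1(0),\dots,v_k(0)$ is an orthonormal basis of $E$ and each $v_j(s)$ solves \eqref{vareq} along the path $u(s)=\varphi_\omega^s(u_0)$, then the $k$-volume $V(s):=\|v_1(s)\wedge\cdots\wedge v_k(s)\|_{\wedge^k\mathcal{H}}$ equals $|\det(D_u\varphi_\omega^s|_E)|$. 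Taking $\frac{1}{t}\log$ and then the supremum over $E$ will give $\mathfrak{L}_k(t;\omega,u)$.

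Next I would derive the standard Liouville-type identity
\begin{equation*}
\frac{\txtd}{\txtd s}\log V(s) \;=\; \sum_{j=1}^k \langle L(s)\tilde v_j(s),\tilde v_j(s)\rangle \;=\; \mathrm{tr}\bigl(P_{E(s)} L(s) P_{E(s)}\bigr),
\end{equation*}
where $\tilde v_j(s)$ is the Gram--Schmidt orthonormalization of $v_1(s),\dots,v_k(s)$ and $E(s)=\mathrm{span}\{v_1(s),\dots,v_k(s)\}$. This is the infinite-dimensional analogue of the formula for the evolution of $k$-dimensional volumes under a (not necessarily self-adjoint) linear flow; it depends only on the symmetric part of $L(s)$, which here coincides with $L(s)$ since $L(s)$ is self-adjoint on $\mathcal{H}$.

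The key analytic step is then a Ky~Fan type eigenvalue bound: for every $s$ and every $k$-dimensional subspace $F\subset\mathcal{H}$, the trace of the self-adjoint operator $A+\alpha=\Delta-g+\alpha$ restricted to $F$ is majorized by the sum of its top $k$ eigenvalues, $\sum_{j=1}^k(\alpha-\lambda_j)$. Combining this with the pointwise inequality $-3u(s)^2\le 0$ (so $\mathrm{tr}(P_{E(s)} L(s) P_{E(s)})\le\sum_{j=1}^k(\alpha-\lambda_j)-3\sum_j\langle u(s)^2\tilde v_j(s),\tilde v_j(s)\rangle$, with the last sum nonnegative) and integrating from $0$ to $t$ yields
\begin{equation*}
\log V(t)\;\le\;\log V(0) + t\sum_{j=1}^k(\alpha-\lambda_j)\;=\;t\sum_{j=1}^k(\alpha-\lambda_j).
\end{equation*}
Taking the supremum over $k$-dimensional $E$ and specializing $u_0=a(\omega)$ produces the claimed bound on $\mathfrak{L}_k(t;\omega)$, almost surely.

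The bound is deterministic (independent of $\omega$), so the $\mathbb{P}$-a.s.\ statement will follow immediately from the pathwise estimate on the set $\Omega'$ of Proposition~\ref{g1}. The main obstacle I expect is technical rather than conceptual: justifying the Liouville identity for the mild solutions of \eqref{vareq} in the Hilbert space $\mathcal{H}$, in particular ensuring that $3u(s)^2\tilde v_j(s)\in\mathcal{H}$ and that the Gram--Schmidt procedure behaves sufficiently regularly in $s$ (so that the derivative above is literally valid and not just formal). This should be handled either by working with $u_0\in\mathcal{V}$ and the additional regularity guaranteed by assumption~\ref{thirdproperty} on $Q$, followed by an approximation argument to pass to general $u_0\in\mathcal{H}$, or by invoking the corresponding regularity results used in \cite{https://doi.org/10.48550/arxiv.2108.11073} for the spatially homogeneous case, which transfer to our setting since the extra term $g(x)v$ is a bounded perturbation of $\Delta$.
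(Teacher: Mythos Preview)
Your proposal is correct and follows essentially the same approach as the paper. Both arguments differentiate the $k$-volume $\|v_1(s)\wedge\cdots\wedge v_k(s)\|_{\wedge^k\mathcal{H}}$ along the first-variation flow, drop the nonpositive contribution of $-3u(s)^2$, and bound the remaining $(A+\alpha)$-part by $\sum_{j=1}^k(\alpha-\lambda_j)$ via a Ky~Fan/min--max argument; the only cosmetic difference is that the paper carries this out directly in wedge-product notation (writing $\tfrac{1}{2}\tfrac{\txtd}{\txtd t}\|\mathbf{v}_t\|^2$ and passing through the semigroup $\wedge^k e^{(A+\alpha)r}$, invoking \cite[Lemma~3.4]{https://doi.org/10.48550/arxiv.2108.11073}), whereas you phrase the same computation as the classical Liouville identity $\tfrac{\txtd}{\txtd s}\log V(s)=\mathrm{tr}(P_{E(s)}L(s)P_{E(s)})$ via Gram--Schmidt.
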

\begin{proof}
Given $\textbf{v}_t=v^1_t\wedge...\wedge v^k_t=\wedge^k \txtD_{a(\omega)} \varphi_\omega^t(\textbf{v}_0)$, with $\textbf{v}_0\in \wedge^k \mathcal{H}$, we obtain by \cite[Lemma 3.4]{https://doi.org/10.48550/arxiv.2108.11073} and the min-max principle that
\begin{equation*}\begin{split}\dfrac{1}{2}\dfrac{\textnormal{d}}{\textnormal{d}t}\lvert\lvert\textbf{v}_t\rvert\rvert_{\wedge^k \mathcal{H}}^2&=\sum_{j=1}^k (\textbf{v}_t,v^1_t\wedge...\wedge v^{j-1}_t\wedge (f+\Delta+B_\omega^t) v^j_t \wedge v^{j+1}_t \wedge ... \wedge v^k_t)\\
&\leq\sum_{j=1}^k (\textbf{v}_t,v^1_t\wedge...\wedge v^{j-1}_t\wedge (f+\Delta) v^j_t \wedge v^{j+1}_t \wedge ... \wedge v^k_t)\\
&=\dfrac{\txtd}{\txtd r}\bigg|_{r=0} (\textbf{v}_t,\wedge^k e^{(f+\Delta)r} \textbf{v}_t)
\leq\dfrac{\txtd}{\txtd r}\bigg|_{r=0} e^{\sum_{j=1}^k(\alpha-\lambda_j) r}\lvert\lvert\textbf{v}_t\rvert\rvert_{\wedge^k \mathcal{H}}^2
=\sum_{j=1}^k\big(\alpha-\lambda_j\big)\lvert\lvert \textbf{v}_t\rvert\rvert_{\wedge^k \mathcal{H}}^2
\end{split}\end{equation*}
for $B^t_\omega:=-3 a (\theta^t \omega)^2$.
\end{proof}

\subsection{Lower bound}
\begin{lm}\label{Lemma1}
With probability $1$, $a(\theta^t\omega)\in \mathcal{V}$ for all $\alpha,t\in \mathbb{R}$. Also, for any $T,\epsilon>0$, there exists an $\mathcal{F}^T_{-\infty}$-measurable set $\Omega_0\subset\Omega$ such that $\mathbb{P}(\Omega_0)>0$ and 
\[\lvert\lvert a(\theta^t\omega)\rvert\rvert_\mathcal{V}\in(0,\epsilon)\;\;\; \text{for all}\;\alpha\in\mathbb{R},\;t\in[0,T]\;\text{and}\;\omega\in\Omega_0.\]
\end{lm}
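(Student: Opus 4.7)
The plan is to split the lemma into its two distinct assertions: the qualitative regularity statement $a(\theta^t\omega)\in\mathcal{V}$, and the quantitative small-ball estimate on $[0,T]$. Throughout I interpret the $\forall\alpha$ in the second assertion as ``for each fixed parameter $\alpha$, a corresponding $\Omega_0=\Omega_0(\alpha)$ exists'', since the attractor $a$ itself depends on $\alpha$.

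For the regularity claim I would invoke Proposition \ref{impprop}: the unique invariant measure $\mu$ is concentrated on $\mathcal{V}$. Since the law of the random variable $a$ is $\mu$, there is a $\mathbb{P}$-full set $\Omega_1$ with $a(\omega)\in\mathcal{V}$ for $\omega\in\Omega_1$. For $t\geq 0$ the cocycle identity $a(\theta^t\omega)=\varphi_\omega^t(a(\omega))$ combined with the $\mathcal{V}$-regularity of mild solutions from $\mathcal{H}$-initial data (stated just after the assumptions on $Q$ and proved in Appendix B) gives $a(\theta^t\omega)\in\mathcal{V}$. For $t<0$ I use that $\theta$ is $\mathbb{P}$-preserving: the set $\theta^{-|t|}\Omega_1$ also has full measure and is contained in $\{a(\theta^t\cdot)\in\mathcal{V}\}$.

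For the small-ball estimate the plan is to write $\Omega_0=\Omega_0^-\cap\Omega_0^+$, where $\Omega_0^-:=\{0<\|a(\omega)\|_\mathcal{V}<\delta\}\in\mathcal{F}_{-\infty}^0$ controls the ``initial'' position of the attractor and $\Omega_0^+:=\{N_T(\omega)<\eta\}\in\mathcal{F}_0^T$ is a small-noise event, with $N_T$ a suitable $\mathcal{V}$-norm of the stochastic convolution on $[0,T]$. Since $\mathcal{F}_{-\infty}^0$ and $\mathcal{F}_0^T$ are independent sub-$\sigma$-algebras (generated by disjoint Brownian increments), $\mathbb{P}(\Omega_0)=\mathbb{P}(\Omega_0^-)\,\mathbb{P}(\Omega_0^+)$, and $\Omega_0\in\mathcal{F}_{-\infty}^T$ is automatic. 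Positivity of $\mathbb{P}(\Omega_0^-)$ will come from Lemma \ref{implm} and Proposition \ref{impprop}: strong Feller plus irreducibility together with the parabolic smoothing of $e^{tA}$ and the non-degeneracy $q_j>0$ force $\mu$ to charge every $\mathcal{V}$-open neighbourhood of $0$, while also giving $\mu(\{0\})=0$ so that the punctured ball has positive measure. Positivity of $\mathbb{P}(\Omega_0^+)$ is a classical Gaussian small-ball estimate for $Q$-Wiener processes, where property \ref{thirdproperty} on the eigenvalues of $Q$ ensures $N_T<\infty$ almost surely.

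On $\Omega_0^-\cap\Omega_0^+$ I would propagate smallness across $[0,T]$ via a Gr\"onwall estimate on the mild-solution formula,
\[
\|a(\theta^t\omega)\|_\mathcal{V}=\|\varphi_\omega^t(a(\omega))\|_\mathcal{V}\le C_T\bigl(\|a(\omega)\|_\mathcal{V}+N_T(\omega)\bigr),\qquad t\in[0,T],
\]
where the constant $C_T$ reflects the local Lipschitz behaviour of the cubic drift inside a fixed ball of $\mathcal{V}$. Choosing $\delta,\eta$ so that $C_T(\delta+\eta)<\epsilon$ gives the upper bound, while the strict lower bound $\|a(\theta^t\omega)\|_\mathcal{V}>0$ on $[0,T]$ is retained because $\mu(\{0\})=0$ for each fixed $t$ and the sample path $t\mapsto a(\theta^t\omega)$ is continuous in $\mathcal{V}$, so shrinking $\Omega_0$ by a null set eliminates the zero-crossings. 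The hardest step I anticipate is the last one in $\mathcal{V}$-norm rather than $\mathcal{H}$-norm: the cubic nonlinearity has to be handled in $H^1_0$, either by running the Gr\"onwall directly on $\|\cdot\|_\mathcal{V}$ using the embedding $H^1_0(\mathcal{O})\hookrightarrow L^\infty(\mathcal{O})$ in one space dimension, or by a short bootstrap from $\mathcal{H}$ to $\mathcal{V}$ via the smoothing of $e^{tA}$. A secondary but non-trivial point is upgrading the $\mathcal{H}$-irreducibility given in Lemma \ref{implm} to a $\mathcal{V}$-support statement on $\mu$; this is again tractable with the smoothing of $e^{tA}$ and the assumptions on $Q$, but it must be checked rather than quoted verbatim.
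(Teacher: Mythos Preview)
Your proposal is correct and follows essentially the same architecture as the paper's proof: the same decomposition $\Omega_0=\Omega_0^-\cap\Omega_0^+$ with $\Omega_0^-\in\mathcal{F}_{-\infty}^0$ coming from local positivity of the invariant law on $\mathcal{V}$ and $\Omega_0^+\in\mathcal{F}_0^T$ a small-ball event for the stochastic convolution, followed by a Gr\"onwall-type propagation in $\mathcal{V}$. The paper makes two points explicit that you only sketch: it subtracts the Ornstein--Uhlenbeck process $z(t)=\sigma\int_0^t S(t-s)\,\txtd W_s$ (your $N_T$ is $\sup_{[0,T]}\|z\|_\mathcal{V}$) and handles the cubic via a cut-off to make it globally Lipschitz $\mathcal{V}\to\mathcal{H}$, then uses the smoothing bound $\|S(t)\|_{\mathcal{L}(\mathcal{H},\mathcal{V})}\le c\,t^{-1/2}e^{(\alpha-\lambda_1)t}$, so the resulting integral inequality has a weakly singular kernel and needs a Henry-type Gr\"onwall rather than the classical one---this is the precise content of the ``bootstrap'' you anticipate.
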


\begin{proof}
We begin the proof assuming for the initial condition $u_0\in \mathcal{V}$. From Proposition \ref{impprop} and the continuity of the solution in $\mathcal{V}$ we get for the singleton attractor that $a(\omega)\in \mathcal{V}$ for all $\omega\in\Omega$. Additionally, from Lemma \ref{implm} and Proposition \ref{impprop} we know that the invariant measure that describes the law of the attractor is locally positive on $\mathcal{V}$. As a result it follows that $\lvert\lvert a(\omega)\rvert\rvert_\mathcal{V}\in(0,\eta)$, for all $\omega\in\Omega_1$ with $\Omega_1$ that is $\mathcal{F}_{-\infty}^0$-measurable (\cite[Proposition 3.1]{chueshov2001inertial}) and $\mathbb{P}(\Omega_1)>0$ and for a constant $\eta>0$ dependent on $\Omega_1$. We now define the family of operators $S(t):=\textnormal{e}^{(A+\alpha)t}$ in order to study the solutions of
\begin{equation*} 
\txtd u=(\Delta u - g u+\alpha u - u^3) ~\txtd t+\sigma~\txtd W_t \;.\end{equation*}
From this SPDE we subtract the Orstein-Uhlenbeck process which is solution of
\begin{equation*} \begin{cases}
\txtd z=(\Delta z - g z+\alpha z) ~\txtd t+ \sigma ~\txtd W_t \;,\\
z(x,0)=0\;\;\;\;\;,\;\;\;\;\forall x\in\mathcal{O},
\end{cases}\end{equation*}
with zero Dirichlet boundary conditions and takes the form
\begin{equation*} z(t)=\sigma\int_0^t S(t-t_1)\;\txtd W_{t_1} \;,\end{equation*}
to introduce $\tilde{u}:=u-z$.
Hence, we obtain the random PDE
\begin{equation*}\txtd\tilde{u}=(\Delta \tilde{u}- g \tilde{u}+\alpha \tilde{u}) ~\txtd t - (\tilde{u}+z)^3~ \txtd t\;, \end{equation*}
whose mild solution is
\begin{equation}\label{mild1}\tilde{u}(t)=S(t)\tilde{u}_0+\int_0^t S(t-t_1)\;\iota(\tilde{u}(t_1)+z(t_1)) \txtd t_1\;\end{equation}
with $\tilde{u}_0:=\tilde{u}(0)=u_0$ and $\iota(x)=-x^3$ for all $x\in\mathcal{O}$. From \cite{agmon1962eigenfunctions,da2004functional} we obtain that $S(t)$ is an analytic semigroup. The norms $\lvert\lvert\cdot\rvert\rvert_\mathcal{V}=\lvert\lvert(-\Delta)^{\frac{1}{2}}\cdot\rvert\rvert_\mathcal{H}$ and $\lvert\lvert\cdot\rvert\rvert_A=\lvert\lvert (-A)^{\frac{1}{2}}\cdot\rvert\rvert_\mathcal{H}$ are equivalent in $\mathcal{V}$ as proven in Appendix A. Therefore we obtain the following inequalities from \cite{Henry81}:
\begin{equation}\begin{split}
\lvert\lvert S(t)\rvert\rvert_{\mathcal{L}(\mathcal{V})}&\leq \textnormal{e}^{(-\lambda_1+\alpha)t}, \;\text{for all}\;t>0\label{est1}\;,\\
\lvert\lvert S(t)\rvert\rvert_{\mathcal{L}(\mathcal{H},\mathcal{V})}&\leq c t^{-\frac{1}{2}} \textnormal{e}^{(-\lambda_1+\alpha)t}, \;\text{for all}\;t>0
\end{split}\end{equation}
for a certain constant $c>0$. Another important estimate is obtained by the fact that the nonlinear term $\iota:\mathcal{V}\longrightarrow \mathcal{H}$ is locally Lipschitz and thus for any $u_1,u_2\in U\subset \mathcal{V}$ there exists a constant $\ell>0$ such that
\begin{equation*}
\lvert\lvert \iota(u_1)-\iota(u_2)\rvert\rvert_\mathcal{H}\leq \ell \lvert\lvert u_1-u_2\rvert\rvert_\mathcal{V} 
\end{equation*}
with $U$ being a bounded subset of $\mathcal{V}$. Using a cut-off technique, we can truncate $\iota$ outside of a ball in $\mathcal{V}$ of radius $R>0$ and center in the null function and obtain the globally Lipschitz function
\begin{equation*} \tilde\iota(u):=-\Theta\bigg(\dfrac{\lvert\lvert u \rvert\rvert_\mathcal{V}}{R}\bigg) u^3\;,\end{equation*}
with $\Theta:\mathbb{R}^+\longrightarrow [0,1]$, a $\mathcal{C}^1$ cut-off function. As for $\iota$, the Lipschitz inequality takes the form, for a certain $\tilde\ell>0$,
\begin{equation*}
\lvert\lvert \tilde\iota(u_1)-\tilde\iota(u_2)\rvert\rvert_\mathcal{H}\leq \tilde\ell \lvert\lvert u_1-u_2\rvert\rvert_\mathcal{V}\;\text{for all}\;u_1,u_2\in \mathcal{V} \;. 
\end{equation*}
From \eqref{mild1}, the estimates \eqref{est1} and the fact that $\tilde\iota(u)=\iota(u)$ on the ball with center in the null function and radius $R$ in $\mathcal{V}$, we can obtain the following inequality
\begin{equation}\begin{split} \label{stepa}
\lvert\lvert \tilde{u}(t) \rvert\rvert_\mathcal{V} &\leq \lvert\lvert S(t)\tilde{u}_0\rvert\rvert_\mathcal{V}+\int_0^t \lvert\lvert S(t-t_1) \rvert\rvert_{\mathcal{L}(\mathcal{H},\mathcal{V})}\lvert\lvert \tilde\iota(\tilde{u}(t_1)+z(t_1))\rvert\rvert_\mathcal{H} \txtd t_1\\
&\leq \textnormal{e}^{(-\lambda_1+\alpha)t}\lvert\lvert \tilde{u}_0\rvert\rvert_\mathcal{V} + c \int_0^t \textnormal{e}^{(-\lambda_1+\alpha)(t-t_1)}(t-t_1)^{-\frac{1}{2}}(\tilde\ell \lvert\lvert \tilde{u}(t_1)+z(t_1)\rvert\rvert_\mathcal{V}) \txtd t_1\;,
\end{split}\end{equation}
for which we have assumed $R$ large enough to have $\underset{t\in[0,T]}{\sup} \lvert\lvert \tilde{u}(t)+z(t) \rvert\rvert_\mathcal{V}<R$.
By \cite[Proposition 3.1]{chueshov2001inertial} we can consider the set
\begin{equation*}\Omega_2:=\Big\{\omega\in\Omega : \underset{t\in[0,T]}{\sup}\lvert\lvert z(t) \rvert\rvert_\mathcal{V}\leq\eta\Big\}\in\mathcal{F}_0^T\;,\end{equation*}
which has positive probability. Since $\Omega_1\in\mathcal{F}_{-\infty}^0$ and $\Omega_2\in\mathcal{F}_0^T$, they are independent and $\Omega_0:=\Omega_1\cap\Omega_2\in\mathcal{F}_{-\infty}^T$ has also positive probability. We therefore fix $\omega\in\Omega_0$ and derive from \eqref{stepa}
\begin{equation*}
\lvert\lvert \tilde{u}(t) \rvert\rvert_\mathcal{V} \leq \textnormal{e}^{(-\lambda_1+\alpha)t}\lvert\lvert \tilde{u}_0\rvert\rvert_\mathcal{V} +\tilde\ell \eta c \int_0^t \textnormal{e}^{(-\lambda_1+\alpha)(t-t_1)}(t-t_1)^{-\frac{1}{2}} \txtd t_1 + \tilde\ell c \int_0^t \textnormal{e}^{(-\lambda_1+\alpha)(t-t_1)}(t-t_1)^{-\frac{1}{2}} \lvert\lvert \tilde{u}(t_1)\rvert\rvert_\mathcal{V} \txtd t_1\;,\end{equation*}
and then
\begin{equation} \label{stepb}
\textnormal{e}^{-(-\lambda_1+\alpha)t}\lvert\lvert \tilde{u}(t) \rvert\rvert_\mathcal{V} \leq \lvert\lvert \tilde{u}_0\rvert\rvert_\mathcal{V} +\tilde\ell \eta c \int_0^t \textnormal{e}^{-(-\lambda_1+\alpha)t_1}(t-t_1)^{-\frac{1}{2}} \txtd t_1 + \tilde\ell c \int_0^t \textnormal{e}^{-(-\lambda_1+\alpha)t_1}(t-t_1)^{-\frac{1}{2}} \lvert\lvert \tilde{u}(t_1)\rvert\rvert_\mathcal{V} \txtd t_1\;.\end{equation}
The rest of the proof is equivalent to the steps in \cite[Proposition 2.6]{https://doi.org/10.48550/arxiv.2108.11073}, where it is proven that the right-hand side in $\eqref{stepb}$ is bounded by $c_1 \eta$, for $c_1>0$ that does not depend on $t$ but is dependent on $T$, by assuming $\tilde{u}_0=a(\omega)$.
\end{proof}

We define now, for $\textbf{v}=v_1\wedge...\wedge v_k$ and $\textbf{w}=w_1\wedge...\wedge w_k \in \wedge^k \mathcal{H}$,
\begin{equation*} Q^{(k)}_\delta(\textbf{v},\textbf{w}):=\delta\big(\Pi_{\textbf{i}_0}\textbf{v},\textbf{w}\big)-\big(\Pi_{\textbf{i}_0}^\perp\textbf{v},\textbf{w}\big)\end{equation*}
with $\Pi_{\textbf{i}}$ denoting the projection on $\textbf{e}_{\textbf{i}}:=e_{i_1}\wedge...\wedge e_{i_k}$, $\textbf{i}=\{i_1,...,i_k\}$ and $\textbf{i}_0=\{1,...,k\}$ aside from permutations. The scalar product present in such construction is defined in $\eqref{other scal}$. We denote by $\lvert\lvert\cdot\rvert\rvert$ the norm defined by it on $\wedge^k\mathcal{H}$. In the next proof we make use of $\Lambda_{\textbf{i}_0}:=\sum_{j=1}^k (\alpha-\lambda_{i_j})$.
\begin{lm}\label{Lemma2}
Let $T>0$ and $0<\epsilon\ll \dfrac{1}{k} (\lambda_{k+1}-\lambda_k)$ be fixed, and let $\omega\in\Omega$ be an event with the property that the nonlinear term $B^t_\omega:=-3 a (\theta^t \omega)^2$ satisfies
\[\lvert\lvert B_\omega^t\rvert\rvert_\mathcal{V}\leq\epsilon\]
for all $t\in[0,T]$. Finally, assume $\textbf{v}_0=v_0^1\wedge...\wedge v_0^k\in\wedge^k L^2$ satisfies $Q_\delta^{(k)}(\textbf{v}_0)>0$ for $\delta>0$ for which
\[\epsilon (1+\delta)k\leq \Lambda_{\textbf{i}_0}-\Lambda_{\textbf{i}}-\frac{\epsilon (1+\delta) k}{\delta} \;\;\;,\]
for all $\textbf{i}\neq\textbf{i}_0$. Under these conditions, the $k$-blade $\textbf{v}_t:=\wedge^k \txtD_{a(\omega)}\varphi_\omega^t(\textbf{v}_0)$, corresponding to the solutions $v_t^j=\txtD_{a(\omega)}\varphi_\omega^t(v_0^j)$ of the first variation equation with initial condition set at $v_0^j$ for $j\in\{1,...,k\}$ and assumed at time $t>0$, satisfies the inequality
\begin{equation}\label{disq}\frac{1}{2}\frac{\txtd}{\txtd t}Q_\delta^{(k)}(\textbf{v}_t)\geq \bigg(\Lambda_{\textbf{i}_0}-\frac{(1+\delta)k\epsilon}{\delta}\bigg)Q_\delta^{(k)}(\textbf{v}_t).
\end{equation}
\end{lm}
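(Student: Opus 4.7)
The plan is to differentiate $Q_\delta^{(k)}(\textbf{v}_t)=Q_\delta^{(k)}(\textbf{v}_t,\textbf{v}_t)$ using the symmetry of the bilinear form, substitute the wedge-product first-variation equation, and then split the resulting quadratic form into a diagonal contribution from $A+\alpha$ and a perturbation contribution from $B_\omega^t$. By Proposition~\ref{g2} combined with the derivation identity $\tfrac{\txtd}{\txtd t}(v_t^1\wedge\cdots\wedge v_t^k)=\sum_j v_t^1\wedge\cdots\wedge\dot v_t^j\wedge\cdots\wedge v_t^k$ already used in the proof of Theorem~\ref{tm1} (cf.\ \cite[Lemma 3.4]{https://doi.org/10.48550/arxiv.2108.11073}), one obtains
\[\tfrac{1}{2}\tfrac{\txtd}{\txtd t}Q_\delta^{(k)}(\textbf{v}_t)=\sum_{j=1}^{k}Q_\delta^{(k)}\!\bigl(\textbf{v}_t,\,v_t^1\wedge\cdots\wedge(A+\alpha+B_\omega^t)v_t^j\wedge\cdots\wedge v_t^k\bigr).\]

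For the $A+\alpha$ contribution I expand $\textbf{v}_t=\sum_{\textbf{i}}c_{\textbf{i}}(t)\,\textbf{e}_{\textbf{i}}$ in the orthonormal wedge basis built from the normalised eigenfunctions of $A$; each $\textbf{e}_{\textbf{i}}$ is an eigenvector of the corresponding wedge-derivation with eigenvalue $\Lambda_{\textbf{i}}$, so that part equals exactly $\delta\Lambda_{\textbf{i}_0}|c_{\textbf{i}_0}|^2-\sum_{\textbf{i}\neq\textbf{i}_0}\Lambda_{\textbf{i}}|c_{\textbf{i}}|^2$. For the $B_\omega^t$ contribution I rely on the standard bound $\bigl\|\sum_j v_t^1\wedge\cdots\wedge B_\omega^t v_t^j\wedge\cdots\wedge v_t^k\bigr\|_{\wedge^k\mathcal{H}}\leq k\|B_\omega^t\|_{\mathcal{L}(\mathcal{H})}\|\textbf{v}_t\|_{\wedge^k\mathcal{H}}$ together with the one-dimensional Sobolev embedding $\mathcal{V}\hookrightarrow L^\infty(\mathcal{O})$, which gives $\|B_\omega^t\|_{\mathcal{L}(\mathcal{H})}\leq\|B_\omega^t\|_{L^\infty}\leq\epsilon$ once the embedding constant is absorbed into $\epsilon$ (as in the cited reference). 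Applying Cauchy--Schwarz to the $\Pi_{\textbf{i}_0}$ and $\Pi_{\textbf{i}_0}^\perp$ pieces of $Q_\delta^{(k)}$ separately, and using $\|\Pi_{\textbf{i}_0}\textbf{v}_t\|,\|\Pi_{\textbf{i}_0}^\perp\textbf{v}_t\|\leq\|\textbf{v}_t\|$, delivers the sharp estimate
\[\bigl|Q_\delta^{(k)}\!\bigl(\textbf{v}_t,\textstyle\sum_{j}v_t^1\wedge\cdots\wedge B_\omega^t v_t^j\wedge\cdots\wedge v_t^k\bigr)\bigr|\leq k(1+\delta)\epsilon\Bigl(|c_{\textbf{i}_0}|^2+\textstyle\sum_{\textbf{i}\neq\textbf{i}_0}|c_{\textbf{i}}|^2\Bigr).\]

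Writing $c:=k(1+\delta)\epsilon$ and combining the two parts, the left-hand side of \eqref{disq} is at least $(\delta\Lambda_{\textbf{i}_0}-c)|c_{\textbf{i}_0}|^2-\sum_{\textbf{i}\neq\textbf{i}_0}(\Lambda_{\textbf{i}}+c)|c_{\textbf{i}}|^2$, whereas the claimed right-hand side expands as $(\delta\Lambda_{\textbf{i}_0}-c)|c_{\textbf{i}_0}|^2-(\Lambda_{\textbf{i}_0}-c/\delta)\sum_{\textbf{i}\neq\textbf{i}_0}|c_{\textbf{i}}|^2$. The coefficients of $|c_{\textbf{i}_0}|^2$ match automatically, and the coefficient-wise inequality for each $\textbf{i}\neq\textbf{i}_0$ collapses to $\Lambda_{\textbf{i}_0}-\Lambda_{\textbf{i}}\geq c+c/\delta = k(1+\delta)^2\epsilon/\delta$, which is precisely the stated hypothesis $\epsilon(1+\delta)k\leq\Lambda_{\textbf{i}_0}-\Lambda_{\textbf{i}}-\epsilon(1+\delta)k/\delta$. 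The main technical obstacle is sharpness: a crude bound of the form $|Q_\delta^{(k)}(\textbf{v},\textbf{w})|\leq C\|\textbf{v}\|\|\textbf{w}\|$ with a non-optimal $C$ would destroy the exact coefficient match, so splitting $Q_\delta^{(k)}$ into its $\Pi_{\textbf{i}_0}$ and $\Pi_{\textbf{i}_0}^\perp$ components \emph{before} applying Cauchy--Schwarz is essential, as is the weighted-Young-type arithmetic $c+c/\delta=(1+\delta)c/\delta$ that converts the operator-norm bound into exactly the factor appearing in the gap hypothesis.
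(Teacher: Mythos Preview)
Your proposal is correct and follows essentially the same route as the paper: differentiate $Q_\delta^{(k)}$ via the wedge derivation, diagonalise the $A+\alpha$ contribution in the basis $\{\textbf{e}_{\textbf{i}}\}$ to obtain $\delta\Lambda_{\textbf{i}_0}\rho_{\textbf{i}_0}^2-\sum_{\textbf{i}\neq\textbf{i}_0}\Lambda_{\textbf{i}}\rho_{\textbf{i}}^2$, bound the $B_\omega^t$ contribution by $k(1+\delta)\epsilon\lVert\textbf{v}_t\rVert^2$, and finish by comparing coefficients under the gap hypothesis. The paper uses the cruder-looking bound $\lvert Q_\delta^{(k)}(\textbf{v},\textbf{w})\rvert\leq(1+\delta)\lVert\textbf{v}\rVert\lVert\textbf{w}\rVert$ directly rather than your projection-splitting description, but these produce the same constant $(1+\delta)$, so there is no substantive difference; your remark that a non-optimal constant would spoil the match is correct, though the optimal one is already immediate from the definition of $Q_\delta^{(k)}$.
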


\begin{proof}
We know that

\begin{equation*} \lvert Q_\delta^{(k)}(\textbf{v},\textbf{w})\rvert \leq (1+\delta) \lvert\lvert \textbf{v}\rvert\rvert \; \lvert\lvert \textbf{w}\rvert\rvert \;\;.\end{equation*}
We then denote $v_j=v^j_t=\txtD_{a(\omega)}\varphi_\omega^t(v^j_0)$. For the multiplication operator $B=B_\omega^t$ we use the hypothesis $\lvert\lvert B\rvert\rvert_\mathcal{V}\leq \epsilon$ which implies that $\lvert\lvert B v\rvert\rvert_\mathcal{H} \leq \epsilon \lvert\lvert v \rvert\rvert_\mathcal{H}$ for any $v \in \mathcal{H}$. Hence, we obtain
\begin{equation*}\begin{split} 
\dfrac{1}{2}\dfrac{\txtd}{\txtd t} Q_\delta^{(k)}(v_1\wedge...\wedge v_k, \; v_1\wedge...\wedge v_k)&=\sum_{j=1}^k  Q_\delta^{(k)}(v_1\wedge...\wedge v_k, \; v_1\wedge...\wedge v_{j-1}\wedge \dot{v}_j \wedge v_{j+1}\wedge...\wedge v_k)\\
&=\sum_{j=1}^k  Q_\delta^{(k)}(v_1\wedge...\wedge v_k, \; v_1\wedge...\wedge v_{j-1}\wedge (\Delta-g+\alpha+B)v_j \wedge v_{j+1}\wedge...\wedge v_k)\\
&\geq\sum_{j=1}^k  Q_\delta^{(k)}(v_1\wedge...\wedge v_k, \; v_1\wedge...\wedge v_{j-1}\wedge (\Delta-g+\alpha)v_j \wedge v_{j+1}\wedge...\wedge v_k)\\
&\quad -\epsilon(1+\delta)k\lvert\lvert v_1\wedge...\wedge v_k\rvert\rvert^2\;\;.
\end{split}\end{equation*}
We know the existence of some coefficients $\rho_{\textbf{i}}$ so that

\begin{equation*}v_1\wedge...\wedge v_k=\sum_{\textbf{i}} \rho_{\textbf{i}} \textbf{e}_{\textbf{i}}\;\;.\end{equation*}
This leads to

\begin{equation*} 
Q_\delta^{(k)}(v_1\wedge...\wedge v_k, \; v_1\wedge...\wedge v_{j-1}\wedge (\Delta-g+\alpha)v_j \wedge v_{j+1}\wedge...\wedge v_k)=\sum_{\textbf{i},\textbf{i}'}\rho_{\textbf{i}} \rho_{\textbf{i}'}(\alpha-\lambda_{i'_j})Q_\delta^{(k)}(\textbf{e}_{\textbf{i}},\textbf{e}_{\textbf{i}'})\;\;,
\end{equation*}
for which

\begin{equation*}
Q_\delta^{(k)}(\textbf{e}_{\textbf{i}},\textbf{e}_{\textbf{i}'})=\begin{cases} 
0 &,\text{if}\;\; \textbf{i}\neq\textbf{i}'\\ 
\delta &, \text{if} \;\; \textbf{i}=\textbf{i}'=\textbf{i}_0\\
-1 &, \text{if} \;\; \textbf{i}=\textbf{i}'\neq\textbf{i}_0
\end{cases} \;\;.\end{equation*}
We can obtain therefore

\begin{equation*} 
Q_\delta^{(k)}(v_1\wedge...\wedge v_k, \; v_1\wedge...\wedge v_{j-1}\wedge (\Delta-g+\alpha)v_j \wedge v_{j+1}\wedge...\wedge v_k)=\delta (\alpha-\lambda_j) \rho_{\textbf{i}_0}^2-\sum_{\textbf{i}\neq\textbf{i}_0}\rho_{\textbf{i}}^2(\alpha-\lambda_{i_j})
\end{equation*}
and

\begin{equation*} 
\sum_{j=1}^k Q_\delta^{(k)}(v_1\wedge...\wedge v_k, \; v_1\wedge...\wedge v_{j-1}\wedge (\Delta-g+\alpha)v_j \wedge v_{j+1}\wedge...\wedge v_k)=\delta \Lambda_{\textbf{i}_0} \rho_{\textbf{i}_0}^2-\sum_{\textbf{i}\neq\textbf{i}_0} \Lambda_{\textbf{i}} \rho_{\textbf{i}}^2\;\;.
\end{equation*}
This gives us the following result

\begin{equation*}\begin{split} 
\dfrac{1}{2}\dfrac{\txtd}{\txtd t} Q_\delta^{(k)}(v_1\wedge...\wedge v_k, \; v_1\wedge...\wedge v_k)&\geq \delta \Lambda_{\textbf{i}_0} \rho_{\textbf{i}_0}^2-\sum_{\textbf{i}\neq\textbf{i}_0} \Lambda_{\textbf{i}} \rho_{\textbf{i}}^2-\epsilon(1+\delta)k\sum_{\textbf{i}} \rho_{\textbf{i}}^2\\
&=\delta\bigg(\Lambda_{\textbf{i}_0}-\frac{\epsilon(1+\delta)k}{\delta}\bigg) \rho_{\textbf{i}_0}^2-\sum_{\textbf{i}\neq\textbf{i}_0}(\Lambda_{\textbf{i}}+\epsilon (1+\delta)k)\rho_{\textbf{i}}^2\;\;.
\end{split}\end{equation*}
Hence, for parameters that satisfy

\begin{equation*}
\epsilon (1+\delta)k\leq \Lambda_{\textbf{i}_0}-\Lambda_{\textbf{i}}-\frac{\epsilon (1+\delta) k}{\delta}
\end{equation*}
for all $\textbf{i}\neq\textbf{i}_0$, or equivalently,
\begin{equation*}
\epsilon k \frac{(1+\delta)^2}{\delta}
\leq \lambda_{k+1}-\lambda_k\;,
\end{equation*}
we can deduce that

\begin{equation*}
\dfrac{1}{2}\dfrac{\txtd}{\txtd t} Q_\delta^{(k)}(v_1\wedge...\wedge v_k, \; v_1\wedge...\wedge v_k)\geq \delta\bigg(\Lambda_{\textbf{i}_0}-\frac{\epsilon(1+\delta)k}{\delta}\bigg) Q_\delta^{(k)}(v_1\wedge...\wedge v_k, \; v_1\wedge...\wedge v_k) \;\;.
\end{equation*}

\end{proof}

\begin{thm}\label{tm2}
For any $k\geq 1$, $0<\eta\ll \lambda_{k+1}-\lambda_k$ and $T>0$, there exists $\Omega_0\subset \Omega$, a positive probability event, such that
\begin{equation*}\mathfrak{L}_k(t;\omega)\geq\sum_{j=1}^k\big(\alpha-\lambda_j \big)-\eta\end{equation*}
for all $\omega\in\Omega_0, \; t\in[0,T]$.
\end{thm}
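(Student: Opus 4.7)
The plan is to apply Lemma \ref{Lemma2} to a carefully chosen initial $k$-blade on a positive-probability event on which the multiplier $B_\omega^t=-3a(\theta^t\omega)^2$ is uniformly small in $t$. First, I would select $\delta>0$ and $\epsilon>0$ so that both of the required inequalities in Lemma \ref{Lemma2} are met, namely $\epsilon k(1+\delta)^2/\delta\leq\lambda_{k+1}-\lambda_k$ and $(1+\delta)k\epsilon/\delta\leq\eta$; since the theorem assumes $\eta\ll\lambda_{k+1}-\lambda_k$, a feasible pair exists, e.g.\ $\delta=1$ together with $\epsilon$ taken sufficiently small. Then, by Lemma \ref{Lemma1}, I would produce an $\mathcal{F}^T_{-\infty}$-measurable event $\Omega_0\subset\Omega$ of positive probability on which $\|a(\theta^t\omega)\|_\mathcal{V}<\epsilon'$ for every $t\in[0,T]$, with $\epsilon'$ as small as needed. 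The one-dimensional Sobolev embedding $\mathcal{V}\hookrightarrow L^\infty(\mathcal{O})$ gives $\|B^t_\omega v\|_\mathcal{H}\leq 3 C_\infty^2(\epsilon')^2\|v\|_\mathcal{H}$ for all $v\in\mathcal{H}$, so choosing $\epsilon'$ such that $3C_\infty^2(\epsilon')^2\leq\epsilon$ secures the standing hypothesis of Lemma \ref{Lemma2} uniformly on $\Omega_0\times[0,T]$.

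On $\Omega_0$, I set $\textbf{v}_0:=\textbf{e}_{\textbf{i}_0}=e_1\wedge\cdots\wedge e_k$, which has $\|\textbf{v}_0\|_{\wedge^k\mathcal{H}}=1$ and $Q_\delta^{(k)}(\textbf{v}_0)=\delta>0$. Integrating the differential inequality \eqref{disq} from $0$ to $t$ yields
\begin{equation*}
Q_\delta^{(k)}(\textbf{v}_t)\geq\delta\exp\left(2\left(\Lambda_{\textbf{i}_0}-\frac{(1+\delta)k\epsilon}{\delta}\right)t\right),
\end{equation*}
where $\textbf{v}_t:=\wedge^k\txtD_{a(\omega)}\varphi_\omega^t(\textbf{v}_0)$. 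The sharp bound $Q_\delta^{(k)}(\textbf{v}_t)=\delta\|\Pi_{\textbf{i}_0}\textbf{v}_t\|_{\wedge^k\mathcal{H}}^2-\|\Pi_{\textbf{i}_0}^\perp\textbf{v}_t\|_{\wedge^k\mathcal{H}}^2\leq\delta\|\textbf{v}_t\|_{\wedge^k\mathcal{H}}^2$ then gives
\begin{equation*}
\|\textbf{v}_t\|_{\wedge^k\mathcal{H}}^2\geq\exp\left(2\left(\Lambda_{\textbf{i}_0}-\frac{(1+\delta)k\epsilon}{\delta}\right)t\right),
\end{equation*}
so using $\|\wedge^k\txtD_{a(\omega)}\varphi_\omega^t\|_{\wedge^k\mathcal{H}}\geq\|\textbf{v}_t\|_{\wedge^k\mathcal{H}}$ and taking $\frac{1}{t}\log$ of both sides delivers $\mathfrak{L}_k(t;\omega)\geq\Lambda_{\textbf{i}_0}-(1+\delta)k\epsilon/\delta\geq\Lambda_{\textbf{i}_0}-\eta$, which is the desired bound.

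The main technical obstacle is obtaining the simultaneous uniform smallness of $\|a(\theta^t\omega)\|_\mathcal{V}$ on the whole window $[0,T]$ with positive probability; this is exactly the role of Lemma \ref{Lemma1}, which exploits the independence of the $\mathcal{F}^0_{-\infty}$-measurable control of the attractor at time $0$ from the $\mathcal{F}^T_0$-measurable control of the stochastic convolution, together with the analytic semigroup estimates in $\mathcal{L}(\mathcal{V})$ and $\mathcal{L}(\mathcal{H},\mathcal{V})$. Once this is in hand, no blow-up term appears in the lower bound on $\mathfrak{L}_k$, owing to the sharp form $Q_\delta^{(k)}(\textbf{v}_t)\leq\delta\|\textbf{v}_t\|^2_{\wedge^k\mathcal{H}}$ enabled by the choice $\textbf{v}_0=\textbf{e}_{\textbf{i}_0}$, so the argument applies uniformly for all $t\in[0,T]$.
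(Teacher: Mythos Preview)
Your proof is correct and follows essentially the same route as the paper: invoke Lemma \ref{Lemma1} to obtain a positive-probability event on which $B_\omega^t$ is uniformly small, then apply Lemma \ref{Lemma2} and integrate the differential inequality \eqref{disq} for a well-chosen initial $k$-blade. Your choice $\delta=1$ with $\textbf{v}_0=\textbf{e}_{\textbf{i}_0}$ is in fact a bit cleaner than the paper's, which couples $\delta=\sqrt{\epsilon}$ and then introduces an auxiliary parameter $M>1$ (with $Q^{(k)}_{\delta/M}(\textbf{v}_0)\geq 0$) to handle a whole cone of initial blades before sending $M\to\infty$; for the statement as written your direct choice already makes the prefactor equal to $1$ and avoids that limit.
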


\begin{proof}
Lemma \ref{Lemma1} proves for any $\epsilon>0$ the existence of a set $\Omega_0\subset\Omega$ so that for all $\omega\in\Omega_0$ we have the bound $\lvert\lvert a(\theta^t \omega)\rvert\rvert_\mathcal{V}<\epsilon$ for $t\in[0,T]$. Such result, along with the fact that $\mathcal{V}$ is a Banach algebra and a subset of $L^2(\mathcal{O})$, satisfies the first hypothesis of Lemma \ref{Lemma2}. Precisely, for any $\epsilon>0$ we obtain $\mathbb{P}\Big(\{\lvert\lvert B_\omega^t\rvert\rvert_\mathcal{V}< \epsilon, \text{for all }t\in[0,T]\}\Big)>0$.\\
From  Lemma \ref{Lemma2}, we know that $Q_\delta^{(k)}(\textbf{v}_0)>0$ implies $Q_\delta^{(k)}(\textbf{v}_t)>0$ for all $t\in[0,T]$ and, assuming $\delta=\sqrt{\epsilon}\ll1$ in $\eqref{disq}$,
\begin{equation*}\frac{1}{2}\frac{\txtd}{\txtd t}Q_\delta^{(k)}(\textbf{v}_t)\geq(\Lambda_{\textbf{i}_0}-2k\delta)Q_\delta^{(k)}(\textbf{v}_t)\;.\end{equation*}
This result gives
\begin{equation}\label{chip}Q_\delta^{(k)}(\textbf{v}_t)\geq\exp\{(\Lambda_{\textbf{i}_0}-2k\delta)t\}Q_\delta^{(k)}(\textbf{v}_0)\; .\end{equation}
We take $\textbf{v}_0\in\wedge^k \mathcal{H}$ and $M>1$ such that $Q_{\frac{\delta}{M}}^{(k)}(\textbf{v}_0)\geq0$.\footnote{Note that  for any $\dfrac{\delta}{M}>0$ this can be satisfied in a neighborhood of $\textbf{v}_0=\textbf{e}_{\textbf{i}_0}$.} It follows that
\begin{equation*}Q_\delta^{(k)}(\textbf{v}_0)=\delta\lvert\lvert\Pi_{\textbf{i}_0}\textbf{v}_0\rvert\rvert^2-\lvert\lvert\Pi_{\textbf{i}_0}^\perp\textbf{v}_0\rvert\rvert^2=\delta\Big(1-\dfrac{1}{M}\Big)\lvert\lvert\Pi_{\textbf{i}_0}\textbf{v}_0\rvert\rvert^2+Q_{\frac{\delta}{M}}^{(k)}(\textbf{v}_0)\geq \frac{\delta(M-1)}{M}\lvert\lvert\Pi_{\textbf{i}_0}\textbf{v}_0\rvert\rvert^2\;.
\end{equation*}
Since $Q_{\frac{\delta}{M}}^{(k)}(\textbf{v}_0)\geq0$ we know that $\lvert\lvert\textbf{v}_0\rvert\rvert^2\leq(1+\frac{\delta}{M})\lvert\lvert\Pi_{\textbf{i}_0}\textbf{v}_0\rvert\rvert^2$ and, therefore,
\begin{equation}\label{chop}Q_\delta^{(k)}(\textbf{v}_0)\geq \frac{\delta(M-1)}{M\big(1+\frac{\delta}{M}\big)}\lvert\lvert\textbf{v}_0\rvert\rvert^2=\frac{\delta(M-1)}{M+\delta}\lvert\lvert\textbf{v}_0\rvert\rvert^2\;.\end{equation}
Since $Q_\delta^{(k)}(\textbf{v}_t)\leq\delta\lvert\lvert\textbf{v}_t\rvert\rvert^2$, and using \eqref{chip} and \eqref{chop}, we obtain
\begin{equation*}\lvert\lvert\textbf{v}_t\rvert\rvert^2\geq \frac{M-1}{M+\delta} \exp\{(\Lambda_{\textbf{i}_0}-2k\delta)t\} \lvert\lvert\textbf{v}_0\rvert\rvert^2\;.\end{equation*}
The proof is complete considering Lemma \ref{Lemma1}, the fact that the prefactor can be close to $1$ for $M\longrightarrow +\infty$ and taking $\eta=2 k \delta\ll \lambda_{k+1}-\lambda_k$ with $\delta=\sqrt{\epsilon}$.
\end{proof}

\paragraph{Remark.} Theorems $\ref{tm1}$ and $\ref{tm2}$ prove that the right extreme of the interval of the possible values assumed by $\mathfrak{L}_k$ is $\Lambda_{\textbf{i}_0}$. In particular for $k=1$ it is $\alpha-\lambda_1$.\medskip

For $k>1$ the value of $\alpha$ that satisfies $\Lambda_{\textbf{i}_0}=0$ is not $\lambda_k$, therefore the change of sign of the highest possible value assumed by $\mathfrak{L}_k$ is not associated with a bifurcation event. The fact that $\dfrac{1}{k}\sum_{j=1}^k \lambda_j<\lambda_k$ implies that $\Lambda_{\textbf{i}_0}=0$ is satisfied for $\alpha<\lambda_k$ i.e. before the $k$-th bifurcation threshold.

%\paragraph{Remark.} We underline that for $k>1$ the results are not sensible to bifurcations. This is due to the fact that $\dfrac{1}{k}\sum_{j=1}^k \lambda_j<\lambda_k$. The idea behind this is that $k$-blades can get bigger even if shrinking on some directions, as long as the growth along the others compensates. For $k=1$ the positivity of $\mathfrak{L}_k$, the FLTE, was proven to be possible only after the bifurcation.

\paragraph{Remark.} The introduction of the heterogeneous term $g$ in \eqref{mainsyst} induces a change of value in the bifurcation thresholds $\{\lambda_k\}_{k\in\mathbb{N}\setminus\{0\}}$ but maintains their existence. For any $k\in\mathbb{N}\setminus\{0\}$ there is a dependence on $\Lambda_{\textbf{i}_0}$ of $g$ for all $\alpha\in\mathbb{R}$ and therefore the choice of $g$ shifts the values of $\alpha$ at which the highest possible value of $\mathfrak{L}_k$ changes sign. 

\section{Early warning signs}

The Chafee-Infante equation has been used to model dynamics in many application areas, e.g., in climate systems (for instance in \cite{debussche2013dynamics}). It is well-understood that, in many applications, it is crucial to take into account stochastic dynamics. In this class of systems, stochasticity can reveal early warning signs for abrupt changes. In the case of $\eqref{mainsyst}$, the tipping phenomenon happens at bifurcation points upon varying $\alpha$. The most important change in the system happens when $\alpha$ crosses $\lambda_1$. Then a pitchfork bifurcation occurs and the null function loses its stability. Therefore, we we will consider $\alpha<\lambda_1$ and the operator $A+\alpha$ that generates a $\mathcal{C}_0$-contraction semigroup (see Appendix B).

\subsection{Early warning signs for the linearized problem}

In \cite{early,kuehn2019scaling} early warning signs for SPDEs which are based on the covariance operator of the linearized problem are presented. The reliability of such approximation relies upon estimates for the higher-order terms, which we are going to consider in the next section. First, we focus on the linearized problem, which is given by 
\begin{equation}\label{linsyst}\begin{cases}
\txtd w=A_\alpha w ~\txtd t+ \sigma ~\txtd W_t\\
w(\cdot,0)=w_0\in \mathcal{H},
\end{cases}\end{equation}
for $t>0$, $x\in\mathcal{O}$, with $A_\alpha=A+\alpha$ and Dirichlet boundary conditions. The existence and uniqueness in $L^2(\Omega,\mathcal{F},\mathbb{P};\mathcal{H})$ of the mild solution of $\eqref{linsyst}$, defined as 
\begin{equation*} w_{A_\alpha}(t)=\textnormal{e}^{A_\alpha t}w_0+\sigma\int_0^t \textnormal{e}^{A_\alpha(t-t_1)} \txtd W_{t_1}\;,\end{equation*}
is satisfied when its covariance operator,
\begin{equation*} V(t):=\sigma^2\int_0^t \textnormal{e}^{A_\alpha t_1} Q \textnormal{e}^{A_\alpha^* t_1} \txtd t_1,\end{equation*}
is trace class for all $t>0$ (\cite[Theorem 5.2]{DaPrato}).\footnote{The symbol * indicates the adjoint in respect to the scalar product on $\mathcal{H}$.} Since $A_\alpha$ generates a $\mathcal{C}_0$-contraction semigroup such a requirement can be satisfied by bounded $Q$.\\
In order to prove the existence and uniqueness in $L^2(\Omega,\mathcal{F},\mathbb{P};\mathcal{H})$ of the mild solution of \eqref{mainsyst} one only has to require continuity in $\mathcal{H}$ of the solution of $\eqref{linsyst}$. Hence the conditions (see Appendix B) we take for $Q$ are: 

\begin{enumerate}[label=\arabic*$'$., ref=\arabic*$'$]
\item \label{property1} there exists an $D>0$ such that $b_j=e_j'$ for all $j>D$,
\item \label{property2} $q_j>0$ for all $j\in \mathbb{N}\setminus\{0\}$,\footnote{The strict positivity is not required but avoids further assumptions. Description on the case in which non-negativity of the spectrum is assumed are discussed in the relevant parts separately.}
\item \label{property3} $\sum_{j=1}^\infty q_j \lambda_j'^\gamma <+\infty$ for a $\gamma>-1$,\footnote{From the order of divergence of $\{\lambda_j'\}_j$, the assumption is satisfied by bounded $Q$ and $-1<\gamma<-\frac{1}{2}$.}
\item \label{property4} $Q$ is bounded.\footnote{For $Q$ bounded but not trace class, the $Q$-Wiener process $W_t$ does not have continuous paths in $\mathcal{H}$, but there is another Hilbert space $\mathcal{H}_1$ in which it does and its defining series converges in $L^2(\Omega, \mathcal{F},\mathbb{P};\mathcal{H}_1)$ (\cite[Chapter 4]{DaPrato}).}
\end{enumerate}

We can then conclude (\cite[Theorem 2.34]{da2004kolmogorov}) that the transition semigroup of the system $\eqref{linsyst}$ has a unique invariant measure which is Gaussian and has mean zero and covariance operator $V_\infty:=\underset{t\rightarrow\infty}{\lim} V(t)$. Such an operator is linear, self-adjoint, non-negative, and continuous in $\mathcal{H}$. Using the stated properties it is possible to prove the following proposition and obtain a first early warning sign building upon \cite{kuehn2019scaling}.

\begin{prop}\label{EWS1}
Consider any pair of linear operators $A$ and $Q$, to whose respective eigenfunctions and eigenvalues we refer as $\{e_k,\lambda_k\}_{k\in\mathbb{N}\setminus\{0\}}$ and $\{b_k,q_k\}_{k\in\mathbb{N}\setminus\{0\}}$, that satisfy the properties \textnormal{\ref{property1}}, \textnormal{\ref{property2}}, \textnormal{\ref{property3}}, \textnormal{\ref{property4}}, and assume $A_\alpha=A+\alpha$ self-adjoint operator that generates a $\mathcal{C}_0$-contraction semigroup for a fixed $\alpha\in\mathbb{R}$. Then the covariance operator $V_\infty$ given by the first equation in $\eqref{linsyst}$ satisfies
\begin{equation}\label{ews1}
    \langle V_\infty e_{j_1},e_{j_2}\rangle=\dfrac{\sigma^2}{\lambda_{j_1}+\lambda_{j_2}-2\alpha}\sum_{n=1}^\infty q_n \langle e_{j_1},b_n\rangle \langle e_{j_2}, b_n\rangle
\end{equation}
for all $j_1,j_2\in\mathbb{N}\setminus\{0\}$.
\end{prop}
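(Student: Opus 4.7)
The plan is to compute $\langle V_\infty e_{j_1}, e_{j_2}\rangle$ directly from the defining integral
\[
V(t) = \sigma^2 \int_0^t \textnormal{e}^{A_\alpha t_1} Q\, \textnormal{e}^{A_\alpha^* t_1}\, \txtd t_1,
\]
and then pass to the limit $t\to\infty$. First, I would use that $A_\alpha = A + \alpha$ is self-adjoint on $\mathcal{H}$ (since $A=\Delta-g$ is a Schrödinger operator with real-valued potential satisfying Dirichlet conditions), so $A_\alpha^* = A_\alpha$, and the semigroup $\textnormal{e}^{A_\alpha t}$ is self-adjoint as well. Consequently, for $t_1 \geq 0$,
\[
\langle \textnormal{e}^{A_\alpha t_1} Q\, \textnormal{e}^{A_\alpha t_1} e_{j_1}, e_{j_2}\rangle
= \langle Q\, \textnormal{e}^{A_\alpha t_1} e_{j_1}, \textnormal{e}^{A_\alpha t_1} e_{j_2}\rangle.
\]

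Next, I would use that the $\{e_k\}_{k\in\mathbb{N}\setminus\{0\}}$ are eigenfunctions of $A_\alpha$ with eigenvalues $\alpha - \lambda_k$, hence $\textnormal{e}^{A_\alpha t_1}e_{j_i}=\textnormal{e}^{(\alpha-\lambda_{j_i})t_1}e_{j_i}$ for $i=1,2$. Combined with the expansion $Q = \sum_n q_n \langle\cdot,b_n\rangle b_n$ coming from the spectral theorem applied to the trace-class self-adjoint $Q$, one gets
\[
\langle Q e_{j_1}, e_{j_2}\rangle = \sum_{n=1}^\infty q_n \langle e_{j_1},b_n\rangle \langle e_{j_2},b_n\rangle,
\]
so that
\[
\langle V(t) e_{j_1}, e_{j_2}\rangle
= \sigma^2 \Big(\sum_{n=1}^\infty q_n\langle e_{j_1},b_n\rangle\langle e_{j_2},b_n\rangle\Big) \int_0^t \textnormal{e}^{(2\alpha-\lambda_{j_1}-\lambda_{j_2})t_1}\, \txtd t_1.
\]
Since $A_\alpha$ generates a $\mathcal{C}_0$-contraction semigroup, $\alpha-\lambda_k \leq 0$ for every $k$, and in fact $\alpha < \lambda_1 \leq \lambda_{j_i}$ under the working assumption of the section, so the exponent $2\alpha-\lambda_{j_1}-\lambda_{j_2}$ is strictly negative. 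The integral therefore converges to $(\lambda_{j_1}+\lambda_{j_2}-2\alpha)^{-1}$ as $t\to\infty$, yielding the claimed formula.

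The main technical point to justify is the interchange of the $n$-summation with the $t_1$-integral (and with the scalar product), as well as the existence of the pointwise limit $\langle V_\infty e_{j_1}, e_{j_2}\rangle = \lim_{t\to\infty}\langle V(t)e_{j_1}, e_{j_2}\rangle$. For the latter I would invoke the hypothesis that $V_\infty$ is the covariance of the unique invariant Gaussian measure (together with boundedness of $V_\infty$, which makes the bilinear form continuous). For the Fubini step, I would bound
\[
\sum_{n=1}^\infty q_n |\langle e_{j_1},b_n\rangle\langle e_{j_2},b_n\rangle| \,\textnormal{e}^{(2\alpha-\lambda_{j_1}-\lambda_{j_2})t_1}
\]
via Cauchy--Schwarz by $\textnormal{e}^{(2\alpha-\lambda_{j_1}-\lambda_{j_2})t_1}\|Q\|_{\mathcal{L}(\mathcal{H})}$ using Parseval on the orthonormal basis $\{b_n\}$, and integrability in $t_1$ follows from the strict negativity of the exponent; hence Fubini applies and the calculation above is rigorous. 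No further obstacle is expected, since everything reduces to spectral manipulations once self-adjointness and the contractive spectral gap are used.
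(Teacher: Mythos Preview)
Your argument is correct and complete. The route you take, however, differs from the paper's. You compute $\langle V(t)e_{j_1},e_{j_2}\rangle$ directly from the defining integral, using that the $e_{j_i}$ are eigenfunctions of $A_\alpha$ to reduce the integrand to a scalar exponential times $\langle Qe_{j_1},e_{j_2}\rangle$, and then let $t\to\infty$. The paper instead invokes the Lyapunov equation
\[
\langle A_\alpha V_\infty f_1,f_2\rangle+\langle A_\alpha^* f_1,V_\infty f_2\rangle=-\sigma^2\langle f_1,Qf_2\rangle
\]
(cited from \cite{da2004kolmogorov}) and reads off the result algebraically by plugging in $f_i=e_{j_i}$. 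Your approach is more self-contained, since it does not rely on an external lemma for the Lyapunov identity, and it makes the convergence $V(t)\to V_\infty$ on these matrix elements explicit. The paper's approach is slightly shorter once the Lyapunov equation is granted and has the mild advantage of working directly with $V_\infty$ without tracking the $t$-limit. Both methods expand $\langle Qe_{j_1},e_{j_2}\rangle$ in the $\{b_n\}$ basis in the same way; your Cauchy--Schwarz/Parseval justification of absolute convergence of that series is the appropriate one under the boundedness-only hypothesis \ref{property4}.
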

\begin{proof}
First, we consider the Lyapunov equation derived in~\cite[Lemma 2.45]{da2004kolmogorov} and obtain
\begin{equation}\label{lyap} \langle A_\alpha V_\infty f_1,f_2\rangle +\langle A_\alpha^* f_1, V_\infty f_2\rangle =-\sigma^2 \langle f_1,Q f_2\rangle \end{equation}
for all $f_1,f_2\in \mathcal{H}$. 
From the self-adjointness of $A_\alpha$ and $V_\infty$, for any $j_1,j_2\in \mathbb{N}\setminus\{0\}$ we deduce that
\begin{equation*} \langle A_\alpha V_\infty e_{j_1},e_{j_2}\rangle +\langle A_\alpha e_{j_1}, V_\infty e_{j_2}\rangle =(-\lambda_{j_1}-\lambda_{j_2}+2\alpha)\langle V_\infty e_{j_1},e_{j_2}\rangle . \end{equation*}
We can then obtain the relation (see Appendix C)
\begin{equation*}\begin{split}&\langle V_\infty e_{j_1},e_{j_2}\rangle =\dfrac{\sigma^2}{\lambda_{j_1}+\lambda_{j_2}-2\alpha}\langle e_{j_1},Q e_{j_2}\rangle =\dfrac{\sigma^2}{\lambda_{j_1}+\lambda_{j_2}-2\alpha}\sum_{n=1}^\infty\langle e_{j_1},b_n\rangle  \langle b_n,Q e_{j_2}\rangle\\
&=\dfrac{\sigma^2}{\lambda_{j_1}+\lambda_{j_2}-2\alpha}\sum_{n=1}^\infty\langle e_{j_1},b_n\rangle \sum_{m=1}^\infty\langle b_n, b_m\rangle \langle b_m,Q e_{j_2}\rangle =\dfrac{\sigma^2}{\lambda_{j_1}+\lambda_{j_2}-2\alpha}\sum_{n=1}^\infty\langle e_{j_1},b_n\rangle \langle b_n,Q e_{j_2}\rangle \\
&=\dfrac{\sigma^2}{\lambda_{j_1}+\lambda_{j_2}-2\alpha}\sum_{n=1}^\infty q_n \langle e_{j_1},b_n\rangle \langle e_{j_2}, b_n\rangle ,\end{split}\end{equation*}
which finishes the proof.
\end{proof}

The early warning sign is qualitatively visible because for $j_1=j_2=1$ the equation $\eqref{ews1}$ diverges when $\alpha=\lambda_1$. This allows one to apply usual techniques (extracting a scaling law from a log-log plot) to estimate the bifurcation point, if sufficient data from the system is available. We also note that the proposition can easily be extended to operators $A$ that are not self-adjoint by studying in that case the real parts of its eigenvalues. Additionally, the strict positivity of $Q$ can be substituted by non-negativity of the operator, although in such case the divergence by the $j_1=j_2=1$ component is not implied. The result $\eqref{ews1}$ leads to a second type of early warning sign. 

\begin{thm}\label{EWS2}
For any pair of linear operators $A$ and $Q$ that satisfy the properties stated in Proposition \ref{EWS1} and for $A$ whose eigenfunctions $\{e_k\}_{k\in\mathbb{N}\setminus\{0\}}$ form a basis of $\mathcal{H}$, we have
\begin{equation}\label{ews2}
    \langle V_\infty f_1,f_2 \rangle =\sigma^2\sum_{j_1=1}^\infty \sum_{j_2=1}^\infty \dfrac{ \langle f_1, e_{j_2}\rangle \langle f_2, e_{j_1}\rangle }{\lambda_{j_1}+\lambda_{j_2}-2\alpha}\bigg(\sum_{n=1}^\infty q_n \langle e_{j_1},b_n\rangle \langle e_{j_2}, b_n\rangle \bigg),
\end{equation}
for any $f_1,f_2\in \mathcal{H}$.
\end{thm}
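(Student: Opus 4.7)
The plan is to reduce Theorem \ref{EWS2} to Proposition \ref{EWS1} by expanding the generic vectors $f_1,f_2\in\mathcal{H}$ in the orthonormal basis $\{e_k\}_{k\in\mathbb{N}\setminus\{0\}}$ and invoking continuity of the bilinear form $(f_1,f_2)\mapsto\langle V_\infty f_1,f_2\rangle$. Concretely, I would start by writing the Parseval expansions $f_1=\sum_{j_2}\langle f_1,e_{j_2}\rangle e_{j_2}$ and $f_2=\sum_{j_1}\langle f_2,e_{j_1}\rangle e_{j_1}$, both convergent in $\mathcal{H}$, and then use that $V_\infty$ is bounded (in fact trace class and self-adjoint, as recalled before Proposition \ref{EWS1}) to pass the operator through the first series:
\begin{equation*}
V_\infty f_1=\sum_{j_2=1}^\infty \langle f_1,e_{j_2}\rangle\, V_\infty e_{j_2}\qquad\text{in }\mathcal{H}.
\end{equation*}

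Next, I would pair this with the expansion of $f_2$ and use continuity of the inner product to obtain the iterated identity
\begin{equation*}
\langle V_\infty f_1,f_2\rangle=\sum_{j_2=1}^\infty\sum_{j_1=1}^\infty \langle f_1,e_{j_2}\rangle\,\langle f_2,e_{j_1}\rangle\,\langle V_\infty e_{j_2},e_{j_1}\rangle,
\end{equation*}
and then substitute the explicit formula \eqref{ews1} from Proposition \ref{EWS1} (applied to the pair $(j_2,j_1)$ and using the symmetry of $\lambda_{j_1}+\lambda_{j_2}-2\alpha$ and of the $n$-sum in the indices $j_1,j_2$) for each matrix element $\langle V_\infty e_{j_2},e_{j_1}\rangle$. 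A relabelling of the summation indices then yields exactly \eqref{ews2}.

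The only genuinely technical point is the justification of the double-series manipulation: I need to turn the iterated sum into a genuine double sum, and this requires an absolute-convergence argument. I would get it from the fact that $V_\infty$ is trace class (so $\sum_{j}\langle V_\infty e_j,e_j\rangle<+\infty$), which, combined with self-adjointness and non-negativity, gives the Hilbert--Schmidt bound $\sum_{j_1,j_2}|\langle V_\infty e_{j_1},e_{j_2}\rangle|^2<\infty$ and hence, via Cauchy--Schwarz against the $\ell^2$ sequences $\{\langle f_1,e_{j_2}\rangle\}$ and $\{\langle f_2,e_{j_1}\rangle\}$, absolute summability of the double series. This is the step I expect to be the main (though mild) obstacle; everything else is bookkeeping with Proposition \ref{EWS1}.
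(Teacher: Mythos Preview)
Your proposal is correct and follows essentially the same route as the paper: the paper expands $\langle V_\infty f_1,f_2\rangle$ via the Parseval identity (its Appendix C) in the basis $\{e_k\}$, uses self-adjointness of $V_\infty$ to move it across, expands once more, and then substitutes \eqref{ews1}. Your added Hilbert--Schmidt/Cauchy--Schwarz justification for absolute summability is a nice extra but is not strictly needed, since the paper records the result as an iterated sum and each iteration is justified directly by Parseval.
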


\begin{proof}
Applying the method described in Appendix C and the previous proposition,
\begin{equation*}\begin{split}
&\langle V_\infty f_1,f_2\rangle =\sum_{j_1=1}^\infty \langle V_\infty f_1, e_{j_1}\rangle  \langle e_{j_1}, f_2\rangle =\sum_{j_1=1}^\infty \langle f_1, V_\infty e_{j_1}\rangle \langle e_{j_1}, f_2\rangle \\
&=\sum_{j_1=1}^\infty \sum_{j_2=1}^\infty \langle f_1, e_{j_2}\rangle \langle e_{j_2},V_\infty e_{j_1}\rangle  \langle e_{j_1}, f_2\rangle \\
&=\sigma^2\sum_{j_1=1}^\infty \sum_{j_2=1}^\infty \dfrac{ \langle f_1, e_{j_2}\rangle \langle f_2, e_{j_1}\rangle }{\lambda_{j_1}+\lambda_{j_2}-2\alpha}\bigg(\sum_{n=1}^\infty q_n \langle e_{j_1},b_n\rangle \langle e_{j_2}, b_n\rangle \bigg)\;.
\end{split}\end{equation*}
\end{proof}

The last theorem demonstrates that, given $f_1$ and $f_2$ that have a component in $e_1$, the resulting scalar product $\langle V_\infty f_1,f_2\rangle$ diverges in $\alpha=\lambda_1$. Theorem \ref{EWS1} can describe local behaviour and it can also be used to study the effect of stochastic perturbations on a single point $x_0\in\mathring{\mathcal{O}}$. This can be achieved by taking $f_1, f_2\in \mathcal{H}$ in $\eqref{ews2}$ equal to a function 
\begin{equation}\label{constr_f}
f_\epsilon\in \mathcal{H}\qquad \text{ with}\quad f_\epsilon=\left.\dfrac{1}{\epsilon}\chi_{[x_0-\frac{\epsilon}{2},x_0+\frac{\epsilon}{2}]}\right|_\mathcal{O}\text{ for $0<\epsilon\ll 1$,}    
\end{equation}
for $\chi_{[\cdot,\cdot]}$ meant as the indicator function on an interval. Clearly, $f_\epsilon$ is in $L^1(\mathcal{O})$ and in $L^2(\mathcal{O})$. The $L^1$ norm is equal to $1$ for small $\epsilon$ and the sequence $\{f_\epsilon\}_\epsilon$ converges weakly in $L^1$ to the Dirac delta in $x_0$ for $\epsilon\rightarrow0$. For these reasons and recalling that $\{e_j\}_{j\in\mathbb{N}\setminus\{0\}}\subset H^1_0(\mathcal{O}) \subset L^\infty(\mathcal{O})$ by the Sobolev inequality, we can obtain $\underset{{\epsilon\rightarrow0}}{\lim}\langle V_\infty f_\epsilon,f_\epsilon\rangle$.

\begin{cor}\label{EWS3}
Consider any pair of linear operators $A$ and $Q$ with same properties as required in Theorem \ref{EWS2} and $\{f_\epsilon\}$ defined in \eqref{constr_f}, then 
\begin{equation}\label{ews3}
    \underset{{\epsilon\rightarrow0}}{\lim}\langle V_\infty f_\epsilon,f_\epsilon\rangle =\sigma^2\sum_{j_1=1}^\infty \sum_{j_2=1}^\infty \dfrac{ e_{j_1}(x_0) e_{j_2}(x_0)}{\lambda_{j_1}+\lambda_{j_2}-2\alpha}\bigg(\sum_{n=1}^\infty q_n \langle e_{j_1},b_n\rangle \langle e_{j_2}, b_n\rangle \bigg),
\end{equation}
for all $x_0\in\mathring{\mathcal{O}}$.
\end{cor}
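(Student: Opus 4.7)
The plan is to apply Theorem \ref{EWS2} with $f_1=f_2=f_\epsilon$ and pass to the limit $\epsilon\to 0$ termwise inside the double sum. For every $x_0\in\mathring{\mathcal{O}}$ and all small enough $\epsilon$, the support of $f_\epsilon$ is strictly inside $\mathcal{O}$, so $f_\epsilon\in\mathcal{H}$ and \eqref{ews2} yields
\[
\langle V_\infty f_\epsilon,f_\epsilon\rangle=\sigma^2\sum_{j_1=1}^\infty\sum_{j_2=1}^\infty\frac{\langle f_\epsilon,e_{j_1}\rangle\langle f_\epsilon,e_{j_2}\rangle}{\lambda_{j_1}+\lambda_{j_2}-2\alpha}\bigg(\sum_{n=1}^\infty q_n\langle e_{j_1},b_n\rangle\langle e_{j_2},b_n\rangle\bigg).
\]
The target expression \eqref{ews3} is obtained by replacing each $\langle f_\epsilon,e_j\rangle$ by $e_j(x_0)$, so the corollary reduces to proving a termwise limit plus a legitimate limit/summation exchange.

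For the termwise limit, the one-dimensional Sobolev embedding $H^1_0(\mathcal{O})\hookrightarrow C(\overline{\mathcal{O}})$ makes every eigenfunction $e_j$ of $A$ continuous, so pointwise evaluation at $x_0\in\mathring{\mathcal{O}}$ is well defined, and continuity of $e_j$ at $x_0$ gives
\[
\langle f_\epsilon,e_j\rangle=\frac{1}{\epsilon}\int_{x_0-\frac{\epsilon}{2}}^{x_0+\frac{\epsilon}{2}} e_j(x)\,\txtd x\;\longrightarrow\;e_j(x_0)\qquad\text{as }\epsilon\to 0.
\]

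The main obstacle is the exchange of the limit with the double sum. I would apply dominated convergence on $\mathbb{N}^2$ equipped with the counting measure: the uniform bound $|\langle f_\epsilon,e_j\rangle|\leq\lvert\lvert f_\epsilon\rvert\rvert_1\,\lvert\lvert e_j\rvert\rvert_\infty=\lvert\lvert e_j\rvert\rvert_\infty$ combined with the Sobolev estimate $\lvert\lvert e_j\rvert\rvert_\infty\leq C\lvert\lvert e_j\rvert\rvert_\mathcal{V}\lesssim\sqrt{\lambda_j}$ (using the Appendix A asymptotics $\lambda_j\sim\lambda_j'$ and the normalization of $e_j$ in $\mathcal{H}$) produces the dominating series
\[
\sum_{j_1,j_2=1}^\infty\frac{\sqrt{\lambda_{j_1}\lambda_{j_2}}\,|\langle e_{j_1},Qe_{j_2}\rangle|}{\lambda_{j_1}+\lambda_{j_2}-2\alpha}.
\]
Using $\sqrt{\lambda_{j_1}\lambda_{j_2}}/(\lambda_{j_1}+\lambda_{j_2})\leq 1/2$ (AM-GM), together with assumption \ref{property1} (which reduces $\langle e_{j_1},Qe_{j_2}\rangle$ to a finite-rank correction plus sums against $\{e_n'\}$ for large indices) and a Cauchy--Schwarz-with-weights argument against the decay of $\{q_n\}$ from \ref{property3}, one obtains finiteness of the dominating series. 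Once domination is secured, swapping the limit with the two sums produces exactly the right-hand side of \eqref{ews3}, completing the proof.
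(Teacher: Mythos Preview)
Your overall strategy (apply Theorem \ref{EWS2}, take the termwise limit, justify via dominated convergence on $\mathbb{N}^2$) matches the paper's. The gap is in your domination step.

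You bound $|\langle f_\epsilon,e_j\rangle|\leq\|e_j\|_\infty$ and then estimate $\|e_j\|_\infty\lesssim\sqrt{\lambda_j}$ via the Sobolev embedding. After applying AM--GM to $\sqrt{\lambda_{j_1}\lambda_{j_2}}/(\lambda_{j_1}+\lambda_{j_2})$ you are left with essentially $\sum_{j_1,j_2}|\langle e_{j_1},Qe_{j_2}\rangle|$, and you claim this is finite using property~\ref{property3}. It is not, in general: the hypotheses (properties \ref{property1}--\ref{property4}) allow $Q$ to be merely bounded, e.g.\ $Q=I$ with $b_j=e_j'$ and $\gamma$ just above $-1$. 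In that case $\langle e_{j_1},Qe_{j_2}\rangle=\delta_{j_1}^{j_2}$ and your dominating series is $\sum_j 1=\infty$. The Sobolev bound $\|e_j\|_\infty\lesssim\sqrt{\lambda_j}$ exactly cancels the $1/\lambda_j$ decay from the denominator, which is what makes the diagonal convergent.

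The paper avoids this by using the \emph{uniform} bound $\|e_j\|_\infty\leq M$, which follows from the Schr\"odinger eigenfunction asymptotics $\|e_j-e_j'\|_\infty=O(1/j)$ in \eqref{sync} of Appendix~A. With this bound the factor $1/(\lambda_{j_1}+\lambda_{j_2}-2\alpha)$ survives: the diagonal is controlled by $M^2 q_*\sum_j(\lambda_j-\alpha)^{-1}<\infty$, and the off-diagonal is handled by decomposing $e_j=e_j'+(e_j-e_j')$ and using $\langle e_{j_1}',Qe_{j_2}'\rangle=0$ for $j_1\neq j_2$ with $j_1,j_2>D$ together with the $O(1/j)$ correction. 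Replacing your Sobolev estimate by this uniform $L^\infty$ bound repairs the argument.
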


\begin{proof}
Fix $x_0\in\mathring{\mathcal{O}}$. The goal of this proof is to show that the series in $\eqref{ews2}$ satisfies the dominated convergence hypothesis for $f_1=f_2=f_\epsilon$ for all $0<\epsilon$. In order to achieve that, we split the series
\begin{equation}\label{part0}
    \langle V_\infty f_\epsilon,f_\epsilon\rangle =
    \sum_{j_1=1}^\infty \sum_{j_2=1}^\infty \bigg\lvert\dfrac{ \langle f_\epsilon,e_{j_1}\rangle \langle f_\epsilon,e_{j_2}\rangle}{\lambda_{j_1}+\lambda_{j_2}-2\alpha}\langle e_{j_1},Q e_{j_2}\rangle \bigg\rvert
\end{equation}
in two parts.\\
\begin{itemize}
\item[Part 1.] Following $\eqref{sync}$ in Appendix A we get the existence of $c,M>0$ so that $\lvert\lvert e_j-e_j'\rvert\rvert_\infty\leq c\dfrac{1}{j}$, $\lvert\lvert e_j\rvert\rvert_\infty\leq M$ and $\lambda_j\sim j^2$ for all indexes $j\in\mathbb{N}\setminus\{0\}$. For $j_1=j_2$, we can use H\"older's inequality to study the series
\begin{equation}\label{part1}
    \dfrac{1}{2} \sum_{j=1}^\infty \bigg\lvert\dfrac{\langle f_\epsilon,e_j\rangle^2}{\lambda_j-\alpha}\langle e_j,Q e_j\rangle \bigg\rvert\leq \dfrac{M^2}{2} q_* \sum_{j=1}^\infty \dfrac{1}{\lambda_j-\alpha}<+\infty\;,
\end{equation}
for $q_*=\underset{j\in\mathbb{N}\setminus\{0\}}{\sup}\{q_j\}$.
\item[Part 2.] To prove the convergence of $\eqref{part0}$ we consider
\begin{equation*}
\begin{split}
    &\sum_{j_1=1}^\infty \sum_{j_2\neq j_1} \bigg\lvert\dfrac{ \langle f_\epsilon,e_{j_1}\rangle \langle f_\epsilon,e_{j_2}\rangle}{\lambda_{j_1}+\lambda_{j_2}-2\alpha}\langle e_{j_1},Q e_{j_2}\rangle \bigg\rvert
    \leq M^2 \sum_{j_1=1}^\infty \sum_{j_2\neq j_1} \bigg\lvert\dfrac{\langle e_{j_1},Q e_{j_2}\rangle }{\lambda_{j_1}+\lambda_{j_2}-2\alpha}\bigg\rvert\\
    &\leq M^2 \sum_{j_1=1}^\infty \sum_{j_2\neq j_1} \bigg\lvert\dfrac{\langle e_{j_1}',Q e_{j_2}'\rangle +\langle e_{j_1}',Q (e_{j_2}-e_{j_2}')\rangle +\langle e_{j_1}-e_{j_1}',Q e_{j_2}'\rangle +\langle e_{j_1}-e_{j_1}',Q (e_{j_2}-e_{j_2}')\rangle }{\lambda_{j_1}+\lambda_{j_2}-2\alpha}\bigg\rvert\;.
\end{split}
\end{equation*}
We recall that $\langle e_{j_1}',Q e_{j_2}'\rangle =q_{j_1} \delta_{j_2}^{j_1}$ for $j_1,j_2>D$, for $\delta$ meant as the Kronecker delta. Also, by the inclusion in $\mathcal{H}$ of the space of functions $L^\infty(\mathcal{O})$ with Dirichlet conditions, we can state there exists $c_1>0$ so that for all $w\in \mathcal{H}$, $\lvert\lvert w\rvert\rvert_\mathcal{H}\leq c_1 \lvert\lvert w\rvert\rvert_\infty$. Therefore, by H\"older's inequality and the min-max principle, it follows that 
\begin{equation}\label{part2}
\begin{split}
&\sum_{j_1=1}^\infty \sum_{j_2\neq j_1} \bigg\lvert\dfrac{ \langle f_\epsilon,e_{j_1}\rangle  \langle f_\epsilon,e_{j_2}\rangle }{\lambda_{j_1}+\lambda_{j_2}-2\alpha}\langle e_{j_1},Q e_{j_2}\rangle \bigg\rvert\leq M^2 q_* D^2+
M^2 q_* \sum_{j_1=1}^\infty \sum_{j_2\neq j_1} \bigg\lvert\dfrac{c c_1 \frac{1}{j_2}+c c_1 \frac{1}{j_1}+c^2 c_1^2 \dfrac{1}{j_1 j_2}}{\lambda_{j_1}+\lambda_{j_2}-2\alpha}\bigg\rvert\\
&\leq M^2 q_* D^2+ M^2 q_* c c_1 \Bigg(
\sum_{j_1=1}^\infty \sum_{j_2\neq j_1} \dfrac{1}{(\lambda_{j_1}+\lambda_{j_2}-2\alpha)} \Bigg( \bigg\lvert\dfrac{1}{j_2}\bigg\rvert
+ \bigg\lvert\dfrac{1}{j_1}\bigg\rvert
+ \bigg\lvert\dfrac{c c_1}{j_1 j_2}\bigg\rvert\Bigg)\Bigg)<\infty\;.
\end{split}
\end{equation}
\end{itemize}
From $\eqref{part1}$ and $\eqref{part2}$ we have proven the convergence of $\eqref{part0}$. By the weak convergence in $\mathcal{H}$ of $f_\epsilon$ to $\delta_{x_0}$, the Dirac delta on $x_0$, and the dominated convergence theorem we can then calculate
\begin{equation*}\begin{split}
&\underset{{\epsilon\rightarrow0}}{\lim}\langle V_\infty f_\epsilon,f_\epsilon\rangle =\underset{{\epsilon\rightarrow0}}{\lim}\sigma^2\sum_{j_1=1}^\infty \sum_{j_2=1}^\infty \dfrac{ \langle f_\epsilon,e_{j_1}\rangle  \langle f_\epsilon,e_{j_2}\rangle }{\lambda_{j_1}+\lambda_{j_2}-2\alpha}\bigg(\sum_{n=1}^\infty q_n \langle e_{j_1},b_n\rangle \langle e_{j_2}, b_n\rangle \bigg)\\
&=\sigma^2\sum_{j_1=1}^\infty \sum_{j_2=1}^\infty \underset{{\epsilon\rightarrow0}}{\lim} \dfrac{ \langle f_\epsilon,e_{j_1}\rangle \langle f_\epsilon,e_{j_2}\rangle }{\lambda_{j_1}+\lambda_{j_2}-2\alpha}\bigg(\sum_{n=1}^\infty q_n \langle e_{j_1},b_n\rangle \langle e_{j_2}, b_n\rangle \bigg)\\
&=\sigma^2\sum_{j_1=1}^\infty \sum_{j_2=1}^\infty \dfrac{ e_{j_1}(x_0) e_{j_2}(x_0)}{\lambda_{j_1}+\lambda_{j_2}-2\alpha}\bigg(\sum_{n=1}^\infty q_n \langle e_{j_1},b_n\rangle  \langle e_{j_2}, b_n\rangle \bigg),
\end{split}\end{equation*}
which yields the desired result.
\end{proof}

The efficiency of the early warning sign described in Corollary \ref{EWS3} for any $x_0\in\mathring{\mathcal{O}}$ is underlined by the following Proposition (\cite[Theorem 2.6]{poschel1987inverse}).

\begin{prop}\label{rootsprop}
Given the Schrödinger operator $A=\Delta-g:H^2(\mathcal{O})\cap H^1_0(\mathcal{O})\rightarrow L^2(\mathcal{O})$ and $g\in L^2(\mathcal{O})$, then its eigenfunctions $\{e_k\}_{k\in\mathbb{N}\setminus\{0\}}$ admit exactly $k-1$ roots in $\mathring{\mathcal{O}}$ respectively.
\end{prop}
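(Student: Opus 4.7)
The plan is to reduce to a classical one-dimensional Sturm-Liouville problem and invoke the Sturm oscillation theorem. Writing the eigenvalue equation as $-e_k''(x) + g(x) e_k(x) = \lambda_k e_k(x)$ on $\mathcal{O} = [0,L]$ with $e_k(0) = e_k(L) = 0$, the statement becomes the standard assertion that the $k$-th eigenfunction of a regular Sturm-Liouville problem with Dirichlet conditions has exactly $k-1$ interior zeros.

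The natural tool is the Pr\"ufer substitution. Setting $e_k(x) = r(x)\sin\theta(x)$ and $e_k'(x) = r(x)\cos\theta(x)$ decouples the equation, the angular part satisfying $\theta'(x) = \cos^2\theta(x) + (\lambda_k - g(x))\sin^2\theta(x)$. From this I would extract three properties: (i) the boundary condition $e_k(0) = 0$ forces, after a choice of branch, $\theta(0) = 0$; (ii) whenever $\theta(x_*) \in \pi\mathbb{Z}$ one has $\theta'(x_*) = 1 > 0$, so $\theta$ crosses every multiple of $\pi$ strictly from below and each zero of $e_k$ in $(0, L]$ corresponds to exactly one such crossing (in particular, zeros are simple, since a double zero $e_k(x_0) = e_k'(x_0) = 0$ would force $r(x_0) = 0$ and hence $e_k \equiv 0$ by uniqueness for the first-order system); (iii) a Sturm comparison argument applied to the $\theta$-equation shows that $\theta(L;\lambda)$ is continuous and strictly increasing in $\lambda$, with $\theta(L;\lambda) \to +\infty$ as $\lambda \to +\infty$ and $\theta(L;\lambda) \to 0^+$ as $\lambda \to -\infty$.

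Combining (i)--(iii), the Dirichlet eigenvalues are precisely the $\lambda$ for which $\theta(L;\lambda) \in \pi\mathbb{Z}_{>0}$, and the ordering $\lambda_1 < \lambda_2 < \cdots$ matches $\theta(L;\lambda_k) = k\pi$. The zeros of $e_k$ in the open interval $\mathring{\mathcal{O}} = (0, L)$ then correspond to integers $n$ with $0 < n < k$, giving exactly $k-1$ zeros.

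The main obstacle I anticipate is purely technical: the H\"older regularity $\gamma > 1/2$ assumed on $g$ is enough for the Pr\"ufer ODE to admit $C^1$ solutions and for the comparison arguments to apply, but one must verify that the monotonicity in (iii) is strict. This follows by differentiating the $\theta$-equation with respect to $\lambda$ and noting that $\partial_\lambda \theta$ solves a linear ODE with strictly positive forcing $\sin^2\theta(x)$, so it remains positive throughout $(0,L]$. Since the full result is classical, appealing directly to \cite[Theorem 2.6]{poschel1987inverse} as the authors do is a legitimate shortcut; the sketch above is essentially the standard proof.
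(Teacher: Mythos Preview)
Your proposal is correct. The paper does not actually give a proof of this proposition: it is stated with an attribution to \cite[Theorem 2.6]{poschel1987inverse} and no argument is supplied. What you have written is the standard Sturm oscillation proof via the Pr\"ufer transformation, which is essentially the argument underlying the cited reference, so there is no meaningful divergence in approach.

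One small technical remark: the proposition as stated assumes only $g\in L^2(\mathcal{O})$, not the H\"older regularity $\gamma>\tfrac{1}{2}$ used elsewhere in the paper. Your sketch invokes $C^1$ solutions of the Pr\"ufer ODE, which is not quite available for merely $L^2$ potentials; one has to work with Carath\'eodory solutions (absolutely continuous $\theta$ satisfying the ODE almost everywhere). All three ingredients you list---the initial condition, the strict upward crossing of $\pi\mathbb{Z}$ (since $\theta'=1$ there regardless of $g$), and the strict monotonicity and unboundedness of $\theta(L;\lambda)$---survive this weakening without change in structure. This is exactly the level of generality at which P\"oschel--Trubowitz operate, so your closing remark that the citation is a legitimate shortcut is apt.
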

From Proposition \ref{rootsprop} and the strict positivity of $Q$, the influence of the divergence of $\eqref{ews1}$ for $j_1=j_2=1$ affects all interior points of $\mathcal{O}$. Hence, on each point $x_0\in\mathring{\mathcal{O}}$ one gets a divergence of $\eqref{ews3}$ of hyperbolic-function type multiplied by the constant $e_1(x_0)^2\bigg(\sum_{n=1}^\infty q_n \langle e_1,b_n\rangle^2\bigg)$. Specifically, the point at which the divergence is visible the most is for $x_0\in\mathring{\mathcal{O}}$ such that $x_0=\underset{x\in\mathring{\mathcal{O}}}{\text{argmax}}\{\lvert e_1(x) \rvert\}$. This is highly relevant for practical applications as it determines a good measurement location for the spatially heterogeneous SPDE.

\paragraph{Remark.} The choice of $g$ influences the early warning signs described by \eqref{ews1}, \eqref{ews2} and \eqref{ews3} in two aspects. By affecting the values of $\{\lambda_j\}_j$ it is related to the value of $\alpha$ at which warning signs diverge and from its relation with the functions $\{e_j\}_j$ it influences the directions on $\mathcal{H}$ on which such divergence is stronger. The point $x_0=\underset{x\in\mathring{\mathcal{O}}}{\text{argmax}}\{\lvert e_1(x) \rvert\}$ at which \eqref{ews3} assumes the highest values close to the bifurcation, i.e. $x_0\in\mathring{\mathcal{O}}$ that satisfies
\begin{equation*}
\underset{{\epsilon\rightarrow0}}{\lim}\dfrac{\langle V_\infty f^{x_0}_\epsilon,f^{x_0}_\epsilon\rangle}{\langle V_\infty f^{x}_\epsilon,f^{x}_\epsilon\rangle}\geq 1 
\quad\text{for any}\quad x\in\mathring{\mathcal{O}} \quad\text{and}
\quad f^x_\epsilon=\left.\dfrac{1}{\epsilon}\chi_{[x-\frac{\epsilon}{2},x+\frac{\epsilon}{2}]}\right|_\mathcal{O},    
\end{equation*}
also depends on the choice of $g$ by construction.

\section{Error and moment estimates}

In the following section we will study the non-autonomous system
\begin{equation}\label{nonasyst}\begin{cases}
\txtd u(x,t)=(\Delta u(x,t) + \alpha(\epsilon t) u(x,t) - g(x) u(x,t) -u(x,t)^3) ~\txtd t +\sigma~ \txtd W_t\\
u(\cdot,0)=u_0\in \mathcal{H}\\
\left.u(\cdot,t)\right|_{\partial\mathcal{O}}=0\;\;\;\;,\;\;\;\;\forall t\geq0
\end{cases}\end{equation}
for $0<\epsilon\ll1$, $\alpha\in \mathcal{C}([0,\tau])$ for a given $\tau>0$ and the $\sigma,g,W_t,\mathcal{H},\mathcal{O}$ defined as in $\eqref{mainsyst}$. The covariance operator $Q$ is assumed to satisfy properties \ref{property1}, \ref{property2}, \ref{property3} and \ref{property4} from the previous section. The slow dependence on time is in accordance to the fact that in simulations of realistic applications, the previous parameter $\alpha$ in $\eqref{mainsyst}$ is not constant but slowly changing; see also~\cite{GnannKuehnPein}. For fixed $\epsilon$ we will assume $ \alpha(\epsilon t)<\lambda_1$ in $0\leq \epsilon t\leq\tau$. Therefore we will study the system before the first non-autonomous bifurcation. Following the methods used in \cite{berglund2006noise,berglund2022stochastic} we want to understand the probabilistic properties of the first time in which the solutions of $\eqref{nonasyst}$, with $u_0$ close to the null function, get driven apart from a chosen solution of the equivalent deterministic system, i.e. $\eqref{nonasyst}$ with $\sigma=0$, taking as metric the distance induced by a fractional Sobolev norm of small degree.\\ \medskip
In this section we use the Landau notation on scalars $\rho_1\in\mathbb{R},\;\rho_2>0$ as $\rho_1=O(\rho_2)$ if there exists a constant $c>0$ that satisfies $\lvert \rho_1 \rvert\leq c \rho_2$. Such constant is independent from $\epsilon$ and $\sigma$, however it depends on $g$ and $\alpha$.\\ \medskip
The first equation in $\eqref{nonasyst}$ can be studied for the slow time $\epsilon t$ setting, named again for convenience $t$, giving the form
\begin{equation*}
    \label{slow}
    \txtd u(x,t)=\dfrac{1}{\epsilon}(\Delta u(x,t) + \alpha(t) u(x,t) - g(x) u(x,t) -u(x,t)^3) ~\txtd t +\dfrac{\sigma}{\sqrt{\epsilon}} ~\txtd W_t
\end{equation*}
through rescaling of time.\\
\medskip
We will denote the $A^s$-norm of power $s\geq 0$ of any function $\phi=\sum_{k=1}^\infty \rho_k e_k \in \mathcal{H}$ with $\{\rho_k\}_{k\in\mathbb{N}\setminus\{0\}}\subset \mathbb{R}$ as
\begin{equation*}
    \lvert\lvert \phi \rvert\rvert_{A^s}^2:=\sum_{k=1}^\infty \lambda_k^s \rho_k^2\;\;\;.
\end{equation*}
The functions in $\mathcal{H}$ that present finite $A^s$-norm define the space $\mathcal{V}^s$. It can be proven that the $A^s-$norm is equivalent to the fractional Sobolev norm on $W^{s,2}_0(\mathcal{O})$, the Sobolev space of degree $s$, $p=2$ and Dirichlet conditions, labeled as $\mathcal{V}^s-$norm,
\begin{equation*}
\lvert\lvert \phi \rvert\rvert_{\mathcal{V}^s}^2:=\sum_{k=1}^\infty \lambda_k'^s \langle \phi,e_k'\rangle^2\;.
\end{equation*}
Such result is given by the fact that, as shown in Appendix A, $\mathcal{D}((-\Delta)^\frac{1}{2})=\mathcal{D}((-A)^\frac{1}{2})$ and from the characterization of interpolation spaces described by \cite[Theorem 4.36]{lunardi2018interpolation}. In particular, it implies $\mathcal{V}^s:=\mathcal{D}((-A)^\frac{s}{2})=W^{s,2}_0(\mathcal{O})$.\\
For simplicity, we will always assume $g(x)\geq 1$ for all $x\in\mathcal{O}$, so that for any couple of parameters $0\leq s<s_1$ and $\phi\in \mathcal{V}^{s_1}$, $\lvert\lvert \phi \rvert\rvert_{A^s}\leq \lvert\lvert \phi \rvert\rvert_{A^{s_1}}$. The following lemma shows an important property, a Young-type inequality, of the $A^s-$norms which we exploit.
\begin{lm}\label{young-lm}
Set $0<r,s$ and $0<q<\dfrac{1}{2}$ so that $q+\dfrac{1}{2}<r+s$. Then there exists $c(r,s,q)>0$ for which
\begin{equation}\label{young-ine}
\lvert\lvert \phi_1 \phi_2 \rvert\rvert_{A^q}\leq c(r,s,q) \lvert\lvert \phi_1 \rvert\rvert_{A^r} \lvert\lvert \phi_2 \rvert\rvert_{A^s}\;,
\end{equation}
for any $\phi_1\in \mathcal{V}^r$ and $\phi_2\in \mathcal{V}^s$.
\end{lm}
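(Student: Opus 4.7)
The plan is to reduce the claim to a classical product inequality in fractional Sobolev spaces. By Appendix A, $\mathcal{V}^s = \mathcal{D}((-A)^{s/2})$ coincides with $W^{s,2}_0(\mathcal{O})$ and the $A^s$-norm is equivalent to the standard Sobolev-Slobodeckij norm on this space. Consequently it suffices to establish
\begin{equation*}
\lvert\lvert \phi_1 \phi_2 \rvert\rvert_{W^{q,2}(\mathcal{O})} \leq c(r,s,q)\, \lvert\lvert \phi_1 \rvert\rvert_{W^{r,2}(\mathcal{O})} \lvert\lvert \phi_2 \rvert\rvert_{W^{s,2}(\mathcal{O})}
\end{equation*}
under the stated constraints; the threshold $r+s > q + \tfrac{1}{2}$ is exactly the subcritical multiplication condition in dimension one, with $\tfrac{1}{2} = d/2$.

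Next, I would extend $\phi_1$ and $\phi_2$ by zero to $\mathbb{R}$. Perturbing the exponents slightly off $\tfrac{1}{2}$ if necessary (which does not violate $q + \tfrac{1}{2} < r+s$, as the inequality is strict), the zero-extension is bounded as a map $W^{\bullet,2}_0(\mathcal{O}) \to W^{\bullet,2}(\mathbb{R})$ at each exponent involved, and the problem reduces to $\lvert\lvert fg \rvert\rvert_{H^q(\mathbb{R})} \lesssim \lvert\lvert f \rvert\rvert_{H^r(\mathbb{R})} \lvert\lvert g \rvert\rvert_{H^s(\mathbb{R})}$ on the full line. This I would prove via Bony's paraproduct decomposition $fg = T_f g + T_g f + R(f,g)$: the Littlewood-Paley characterization of $H^s$ together with Bernstein's inequality controls each of the two paraproducts whenever $\min(r,s) \geq q$, while the remainder $R(f,g)$ converges in $H^q$ precisely under $r + s > q + \tfrac{1}{2}$. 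Restricting back to $\mathcal{O}$ and invoking the norm equivalence then closes the argument.

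The main obstacle is the low-regularity sub-regime in which both $r \leq \tfrac{1}{2}$ and $s \leq \tfrac{1}{2}$: neither factor then belongs to $L^\infty$, so the elementary Leibniz-type bound $[fg]_{H^q} \lesssim \lvert\lvert f \rvert\rvert_\infty [g]_{H^q} + \lvert\lvert g \rvert\rvert_\infty [f]_{H^q}$ is unavailable, and one really needs the paraproduct argument (or, equivalently, a duality with the Sobolev embeddings $H^s \hookrightarrow L^{2/(1-2s)}$ combined with Hölder's inequality) to exploit $r+s > q + \tfrac{1}{2}$ tightly. By contrast, when either $r > \tfrac{1}{2}$ or $s > \tfrac{1}{2}$, the corresponding factor is bounded via Sobolev embedding and the estimate becomes essentially immediate, so it is really the subcritical regime that drives the form of the hypothesis.
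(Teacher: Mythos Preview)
Your approach is correct and reaches the same conclusion, but it differs from the paper's argument. The paper does not extend to $\mathbb{R}$ or invoke paraproducts; it stays on the interval throughout. After passing to the equivalent $\mathcal{V}^{s_1}$-norms built from the Dirichlet Laplacian eigenbasis, it rewrites each function in the complex exponential system $\{L^{-1/2}\mathrm{e}^{\mathrm{i}\pi k x/L}\}_{k\in\mathbb{Z}}$, observes that products of these exponentials are again proportional to members of the same system (so multiplication becomes convolution on the Fourier side), and then cites a Young-type convolution inequality for weighted $\ell^2$ sequences, namely \cite[Lemma~4.3]{berglund2013sharp}, to obtain $\lVert\phi_1\phi_2\rVert_{\mathcal{V}^q}\leq c'(r,s,q)\lVert\phi_1\rVert_{\mathcal{V}^r}\lVert\phi_2\rVert_{\mathcal{V}^s}$ directly. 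This is more elementary and sidesteps both the zero-extension operator and the $s=\tfrac12$ endpoint issue you had to perturb around. Your route is the standard harmonic-analysis argument on the line and would transfer immediately to higher dimensions or more general domains, at the price of importing Littlewood--Paley and Bony machinery. One remark worth recording: you correctly note that the paraproduct bounds require $\min(r,s)\geq q$. This condition is not explicit in the lemma's hypotheses but is in fact necessary for the inequality to hold (take $\phi_2$ smooth and equal to $1$ on a subinterval, and $\phi_1\in \mathcal{V}^r\setminus \mathcal{V}^q$ with $r<q$); it is, however, satisfied in every application the paper makes of the lemma.
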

\begin{proof}
The equivalence of the $A^{s_1}$-norm and the $\mathcal{V}^{s_1}$-norm for any $0\leq s_1\leq 1$ justifies the study of $\eqref{young-ine}$ for the $\mathcal{V}^{s_1}$-norms. The important change that such choice permits is the fact that for any $\phi\in \mathcal{H}$ we can rewrite the series 
\begin{equation*}
\phi(x)=\sum_{k=1}^\infty \langle \phi,e_k'\rangle e_k'(x)=\sum_{k=-\infty}^\infty \tilde\rho_k \dfrac{1}{\sqrt{L}}\textnormal{e}^{\txti\frac{\pi k}{L} x}
\end{equation*}
for $\tilde\rho_k:=-\dfrac{\text{sign}(k) \txti}{\sqrt{2}}\langle \phi,e_k'\rangle $, $\tilde\rho_0=0$ and for the symbol $\txti$ meant as the imaginary unit.\footnote{Note that $\Big\{\dfrac{1}{\sqrt{L}}\textnormal{e}^{\txti\frac{\pi k}{L} x}\Big\}_{k\in\mathbb{Z}}$ is not a basis of $\mathcal{H}$, therefore it is not a Parseval identity on such space.} Furthermore, it is easy to prove that for any $0\leq s_1\leq 1$
\begin{equation*}
\lvert\lvert \phi \rvert\rvert_{\mathcal{V}^{s_1}}^2=\sum_{k=-\infty}^\infty \lambda_k'^{s_1} \lvert \tilde\rho_k \rvert^2\;.
\end{equation*}
Since the product of any couple of elements in $\Big\{\dfrac{1}{\sqrt{L}}\textnormal{e}^{\txti\frac{\pi k}{L} x}\Big\}_{k\in\mathbb{Z}}$ is proportional to another member of the set, we can apply the Young-type inequality in \cite[Lemma 4.3]{berglund2013sharp} which implies the existence of a constant $c'(r,s,q)$ that satisfies
\begin{equation*}
\lvert\lvert \phi_1 \phi_2 \rvert\rvert_{\mathcal{V}^q}\leq c'(r,s,q) \lvert\lvert \phi_1 \rvert\rvert_{\mathcal{V}^r}\; \lvert\lvert \phi_2 \rvert\rvert_{\mathcal{V}^s} \;,
\end{equation*}
for any $\phi_1\in \mathcal{V}^r$ and $\phi_2\in \mathcal{V}^s$. From the equivalence of the $A^s$-norms and the $\mathcal{V}^s$-norms we can state that there exists $c(r,s,q)>0$ that satisfies the desired inequality.
\end{proof}

From the fact that $\alpha(t)<\lambda_1$ for $t<\tau$ we know that any solution $\bar{u}$ of the deterministic problem
\begin{equation}\label{nonasystdet}\begin{cases}
\txtd \bar{u}(x,t)=\dfrac{1}{\epsilon}(\Delta \bar{u}(x,t) + \alpha(t) \bar{u}(x,t) - g(x) \bar{u}(x,t) -\bar{u}(x,t)^3) ~\txtd t\\
\left.\bar{u}(\cdot,t)\right|_{\partial\mathcal{O}}=0\;\;\;\;,\;\;\;\;\forall t\geq0\;\;,
\end{cases}\end{equation}
with initial conditions in $\mathcal{H}$, approaches the null function exponentially in $\mathcal{H}$ in time $0\leq t\leq \tau$. Furthermore, following the proof of \cite[Proposition 2.3]{berglund2022stochastic} we prove in the following Proposition equivalent results for the $A^1-$norm.

\begin{prop}\label{propdet}
Given $\bar{u}$, a solution of $\eqref{nonasystdet}$ such that $\lvert\lvert \bar{u}(\cdot,0)\rvert\rvert_A:=\lvert\lvert \bar{u}(\cdot,0)\rvert\rvert_{A^1}\leq \delta$, then \begin{equation*}
    \lvert\lvert \bar{u}(\cdot,t)\rvert\rvert_A\leq \delta
\end{equation*}
for all $0\leq t\leq \tau$. Furthermore, we have that $\lvert\lvert \bar{u}(\cdot,t)\rvert\rvert_A$ approaches $0$ exponentially in $0\leq t\leq \tau$.
\end{prop}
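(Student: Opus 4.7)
The plan is to derive an energy estimate in the $A$-norm directly from \eqref{nonasystdet}, exploiting the self-adjointness and strict positivity of $-A$ together with the dissipative sign of the cubic nonlinearity. I would set $\Phi(t):=\lvert\lvert\bar{u}(\cdot,t)\rvert\rvert_A^2=\langle -A\bar{u},\bar{u}\rangle$, differentiate along the flow, and use that $A$ is self-adjoint to get
\begin{equation*}
\frac{\txtd \Phi}{\txtd t}=\frac{2}{\epsilon}\langle -A\bar{u},\,A\bar{u}+\alpha(t)\bar{u}-\bar{u}^3\rangle=\frac{2}{\epsilon}\Big[-\lvert\lvert A\bar{u}\rvert\rvert_\mathcal{H}^2+\alpha(t)\Phi(t)-\langle -A\bar{u},\bar{u}^3\rangle\Big].
\end{equation*}

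Next I would show that the cubic term enters with the favourable sign. Writing $-A=-\Delta+g$ and integrating by parts with the Dirichlet boundary conditions,
\begin{equation*}
\langle -A\bar{u},\bar{u}^3\rangle=\int_\mathcal{O} 3\bar{u}^2(\partial_x\bar{u})^2\,\txtd x+\int_\mathcal{O} g(x)\bar{u}^4\,\txtd x\;\geq\;0,
\end{equation*}
since $g(x)\geq 1$ by the standing assumption of this section. Hence this contribution can simply be dropped in an upper bound for $\txtd\Phi/\txtd t$. Combining this with the spectral bound $\lvert\lvert A\bar{u}\rvert\rvert_\mathcal{H}^2=\sum_k\lambda_k^2\rho_k^2\geq\lambda_1\sum_k\lambda_k\rho_k^2=\lambda_1\Phi$ yields
\begin{equation*}
\frac{\txtd \Phi}{\txtd t}\leq\frac{2}{\epsilon}\bigl(\alpha(t)-\lambda_1\bigr)\Phi(t).
\end{equation*}
Since $\alpha\in\mathcal{C}([0,\tau])$ with $\alpha(t)<\lambda_1$ pointwise, compactness of $[0,\tau]$ yields $\bar{\alpha}:=\max_{[0,\tau]}\alpha<\lambda_1$, and Grönwall's inequality gives
\begin{equation*}
\lvert\lvert\bar{u}(\cdot,t)\rvert\rvert_A^2\leq\lvert\lvert\bar{u}(\cdot,0)\rvert\rvert_A^2\,\exp\!\Big(\tfrac{2(\bar{\alpha}-\lambda_1)}{\epsilon}t\Big)\leq \delta^2 \textnormal{e}^{-ct/\epsilon}
\end{equation*}
with $c:=2(\lambda_1-\bar{\alpha})>0$. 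This delivers simultaneously the uniform bound $\lvert\lvert\bar{u}(\cdot,t)\rvert\rvert_A\leq\delta$ on $[0,\tau]$ and the exponential decay claimed in the proposition.

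The one technical point that will need care is the regularity used for the formal manipulations: differentiating $\Phi$ and integrating by parts in the cubic term implicitly requires $\bar{u}(\cdot,t)\in\mathcal{V}^2$ for $t>0$. This, however, is standard parabolic smoothing, since $A+\alpha(t)$ generates an analytic semigroup and $u\mapsto u^3$ is locally Lipschitz from $\mathcal{V}^1$ into $\mathcal{H}$ (by the Young-type inequality of Lemma~\ref{young-lm} applied with $r=s=\tfrac{1}{2}$, $q=0$), so the mild solution with data in $\mathcal{V}^1$ immediately enters $\mathcal{V}^2$ and the above computation is justified for $t>0$; continuity in $\mathcal{V}^1$ extends the bound to $t=0$. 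If one prefers to avoid the smoothing argument, the same estimate can be carried out on the Galerkin projection of \eqref{nonasystdet} onto the span of $\{e_k\}_{k=1}^N$, where everything is finite-dimensional, and then passed to the limit using uniqueness of the mild solution. Apart from this regularity bookkeeping, the argument is a direct energy estimate and presents no further obstacle.
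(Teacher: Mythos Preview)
Your proposal is correct and essentially identical to the paper's proof: both differentiate $\langle -A\bar u,\bar u\rangle$ along the flow, drop the cubic term via the integration-by-parts identity $\langle -A\bar u,\bar u^3\rangle=3\int\bar u^2(\partial_x\bar u)^2+\int g\bar u^4\geq0$, and use the spectral inequality $\lVert A\bar u\rVert_{\mathcal H}^2\geq\lambda_1\langle -A\bar u,\bar u\rangle$ (the paper phrases this last step as $-\langle(A+\lambda_1)\bar u,A\bar u\rangle\leq0$ via the min-max principle, which is the same estimate). Your added discussion of parabolic smoothing/Galerkin approximation to justify the formal differentiation is a reasonable supplement that the paper leaves implicit; note only that Lemma~\ref{young-lm} is stated for $0<q<\tfrac12$, so your invocation with $q=0$ is not literally covered, but the Lipschitz property of $u\mapsto u^3$ from $\mathcal V$ to $\mathcal H$ follows directly from the embedding $H^1_0(\mathcal O)\hookrightarrow L^\infty(\mathcal O)$ in one dimension.
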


\begin{proof}
We define the Lyapunov function
\begin{equation*}
    F_\text{L}(\phi):=\dfrac{1}{2}\Big(\lvert\lvert \nabla \phi \rvert\rvert_\mathcal{H}^2+
    \lvert\lvert g^\frac{1}{2}\phi \rvert\rvert_\mathcal{H}^2\Big)
    =\dfrac{1}{2} \langle -A \phi, \phi\rangle
\end{equation*}
for any $\phi\in \mathcal{V}$.\\
Therefore we obtain
\begin{equation*}\begin{split}
& \dfrac{\txtd}{\txtd t}F_\text{L}(\bar{u})=-\langle A \bar{u},\partial_t \bar{u}\rangle =\dfrac{1}{\epsilon}\Big(-\lvert\lvert A \bar{u} \rvert\rvert_\mathcal{H}^2-\alpha(t)\langle A \bar{u},\bar{u}\rangle +\langle A \bar{u}, \bar{u}^3\rangle \Big)\\
&=\dfrac{1}{\epsilon}\Big(-\langle (A+\lambda_1) \bar{u},A \bar{u}\rangle -(\alpha(t)-\lambda_1)\langle A \bar{u},\bar{u}\rangle +\langle A \bar{u}, \bar{u}^3\rangle \Big)\\
&\leq\dfrac{1}{\epsilon}\Big(-(\alpha(t)-\lambda_1)\langle A \bar{u},\bar{u}\rangle +\langle A \bar{u}, \bar{u}^3\rangle \Big)=\dfrac{2}{\epsilon}(\alpha(t)-\lambda_1)F_\text{L}(\bar{u})+\dfrac{1}{\epsilon}\langle A \bar{u}, \bar{u}^3\rangle \\
&= \dfrac{2}{\epsilon}(\alpha(t)-\lambda_1)F_\text{L}(\bar{u})-\dfrac{3}{\epsilon}\langle \nabla \bar{u}, \bar{u}^2 \nabla \bar{u}\rangle -\dfrac{1}{\epsilon}\lvert\lvert g^\frac{1}{2} \bar{u}^2\rvert\rvert_\mathcal{H}^2\leq \dfrac{2}{\epsilon}(\alpha(t)-\lambda_1)F_\text{L}(\bar{u})
\end{split}\end{equation*}
by using the min-max principle in
\begin{equation*}
-\langle (A+\lambda_1) \bar{u},A \bar{u}\rangle =\langle -A(A+\lambda_1) \bar{u}, \bar{u}\rangle \leq 0 \;\;.
\end{equation*}
We have hence shown that, given initial condition $\bar{u}(\cdot,0)=u_0\in \mathcal{H}$,
\begin{equation*}
F_\text{L}(\bar{u}(\cdot,t))\leq F_\text{L}(u_0) \textnormal{e}^{-\frac{2 c_1}{\epsilon} t}
\end{equation*}
for a constant $c_1>0$. The result now follows since the norm defined by $F_\text{L}$ is equivalent to $\lvert\lvert\cdot\rvert\rvert_A$.
\end{proof}

Given a function $\bar{u}$ that satisfies Proposition \ref{propdet} for a fixed $\delta>0$, we define the set in $\mathcal{V}$
\begin{equation*}
\mathcal{B}(h):=\{(t,\phi)\in [0,\tau]\times \mathcal{V}:\; \lvert\lvert \phi-\bar{u}\rvert\rvert_{A^s}<h\}
\end{equation*}
for $s,h>0$. The first-exit time from $\mathcal{B}(h)$ is the stopping time
\begin{equation*}
\tau_{\mathcal{B}(h)}:=\text{inf}\{t>0:(t,u(\cdot,t))\not\in \mathcal{B}(h)\}\;.
\end{equation*}
With these definitions we can obtain an estimate for the probability of jump outside of the defined neighbourhood over a finite time scale.

\begin{thm}\label{error estimate}
Set $q_*=\underset{j\in\mathbb{N}\setminus\{0\}}{\textnormal{sup}}\{q_j\}$.
For any $0<s<\dfrac{1}{2}$ and $\epsilon,\nu>0$ there exist $\delta_0,\kappa=\kappa(s),h_1, C_{\frac{h^2}{q_* \sigma^2}}(\kappa,T,\epsilon,s)>0$ for which, given $0<\sqrt{q_*}\sigma\ll h<h_1 \epsilon^\nu$ and a function $\bar{u}$ that solves $\eqref{nonasystdet}$ and $\lvert\lvert\bar{u}(\cdot,0)\rvert\rvert_A\leq \delta_0$, the solution of $\eqref{nonasyst}$ with $u_0=\bar{u}(\cdot,0)$ satisfies
\begin{equation*}
\mathbb{P}\Big(\tau_{\mathcal{B}(h)}<T\Big)\leq C_{\frac{h^2}{q_* \sigma^2}} (\kappa,T,\epsilon,s)\textnormal{exp}\Bigg\{-\kappa\dfrac{h^2}{q_*\sigma^2}\bigg(1-O\Big(\dfrac{h}{\epsilon^\nu}\Big)\bigg)\Bigg\}\;\;,
\end{equation*}
for any $0\leq T \leq \tau$.
\end{thm}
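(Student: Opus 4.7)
The plan is to follow a standard linearization-plus-remainder strategy as in \cite{berglund2022stochastic}, adapted to our heterogeneous setting via Lemma \ref{young-lm}. Write the stochastic solution as $u=\bar u+v$, so that $v$ satisfies
\begin{equation*}
\txtd v=\frac{1}{\epsilon}\bigl(A v+\alpha(t)v-3\bar u^2 v-3\bar u v^2-v^3\bigr)\txtd t+\frac{\sigma}{\sqrt{\epsilon}}\txtd W_t,\qquad v(\cdot,0)=0,
\end{equation*}
and decompose $v=v_L+R$ where $v_L$ solves the linearization along $\bar u$,
\begin{equation*}
\txtd v_L=\frac{1}{\epsilon}\bigl(A+\alpha(t)-3\bar u^2\bigr)v_L\,\txtd t+\frac{\sigma}{\sqrt{\epsilon}}\txtd W_t,\qquad v_L(\cdot,0)=0,
\end{equation*}
and $R=v-v_L$ is the deterministic-looking remainder carrying the nonlinearity. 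The exit event $\{\tau_{\mathcal{B}(h)}<T\}$ will be split into a Gaussian tail event for $v_L$ and a deterministic control event for $R$ on the stopped path.

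For $v_L$ the first step is to observe that by Proposition \ref{propdet} the coefficient $\alpha(t)-3\bar u^2$ is, up to an $O(\delta_0^2)$ correction in operator norm on $\mathcal{V}^s$, dominated by $\alpha(t)-\lambda_1<0$. Using the evolution system generated by $\epsilon^{-1}(A+\alpha(t)-3\bar u^2)$, we write $v_L$ as a stochastic convolution, bound its covariance in $A^s$ trace norm (which is finite for $0<s<\tfrac12$ by property \ref{property3} and the asymptotics $\lambda_k\sim k^2$), and apply the Borell--TIS inequality to the Gaussian process $t\mapsto\|v_L(\cdot,t)\|_{A^s}$ on $[0,T]$. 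This yields, after estimating the second moment of the supremum by an entropy/chaining argument (or equivalently by a partition of $[0,T]$ into slow-time intervals of size $\epsilon$ together with a union bound), a bound of the form
\begin{equation*}
\mathbb{P}\Bigl(\sup_{t\in[0,T]}\|v_L(\cdot,t)\|_{A^s}>\tfrac{h}{2}\Bigr)\leq C_{\frac{h^2}{q_*\sigma^2}}(\kappa,T,\epsilon,s)\,\textnormal{exp}\Bigl\{-\kappa\,\frac{h^2}{q_*\sigma^2}\Bigr\}.
\end{equation*}

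For the remainder $R$ the strategy is purely deterministic once restricted to the event $\{\sup_{t\leq\tau_{\mathcal{B}(h)}}\|v_L\|_{A^s}\leq h/2\}$. On this event $\|v\|_{A^s}\leq h$ up to the exit time, and Lemma \ref{young-lm} with $q=s$, $r=s$ and a suitable $s'\in(s,\tfrac12)$ (possibly at the cost of raising $R$ to a stronger norm controlled by the analytic semigroup's smoothing $\tau^{-1/2}$-factor as in \eqref{est1}) bounds
\begin{equation*}
\|\bar u^2 v_L\|_{A^s},\;\|\bar u v^2\|_{A^s},\;\|v^3\|_{A^s}\;\lesssim\;\bigl(\|\bar u\|_{A^{s'}}+\|v\|_{A^{s'}}\bigr)^{2}\|v\|_{A^{s'}}.
\end{equation*}
Combined with the mild-formulation Gronwall inequality for $R$, this gives $\|R(\cdot,t)\|_{A^s}=O(h\cdot h/\epsilon^\nu)$ on $[0,\tau_{\mathcal{B}(h)}]$, where the $\epsilon^{-\nu}$ comes from absorbing the prefactor $c(r,s,q)$ and the $h^{-1}$ rescaling into the assumption $h<h_1\epsilon^\nu$. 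Consequently, on the intersection of the two events, $\|v\|_{A^s}\leq h/2+O(h^2/\epsilon^\nu)<h$, which contradicts $\tau_{\mathcal{B}(h)}<T$. The exit probability is therefore bounded by the Gaussian tail for $v_L$, with the correction factor $(1-O(h/\epsilon^\nu))$ in the exponent absorbing the remainder contribution; one chooses $\delta_0$ small enough (uniformly in $\epsilon$) so that the linearization coefficient is well-controlled, $h_1$ small enough so that the Young-inequality constant stays below the Gaussian constant $\kappa$, and the $s$-dependence in $\kappa$ reflects the trace-class requirement.

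The main obstacle is the closure of the Young-type nonlinear estimate simultaneously with the Gaussian concentration: one needs $s$ large enough for $\mathcal{V}^s$ to be a Banach algebra (which requires $s>\tfrac14$ in one space dimension, obtained through Lemma \ref{young-lm} with $q=s$ and $r=s'>\tfrac14$), yet small enough so that $\sum_k q_k\lambda_k^s$ remains finite under property \ref{property3} and so that the noise-induced variance in $A^s$ norm behaves like $O(q_*\sigma^2)$. The extraction of the clean exponent $\kappa h^2/(q_*\sigma^2)$ will rest on identifying the supremum of the covariance of $v_L$ in $A^s$ norm with a constant times $q_*\sigma^2$, independent of $\epsilon$, which is the delicate point where the slow time rescaling and the contractivity of the linearization must combine.
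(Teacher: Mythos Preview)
Your overall architecture---split $v$ into the linearization $v_L$ along $\bar u$ plus a remainder $R$, bound the exit probability by a Gaussian tail for $v_L$, and control $R$ deterministically on the stopped path---matches the paper's proof. Two technical points, however, are not as the paper has them and would not go through as written.

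First, the Gaussian concentration. The map $t\mapsto\|v_L(\cdot,t)\|_{A^s}$ is not a Gaussian process, so Borell--TIS does not apply to it directly; you would have to pass to the Gaussian field $(t,f)\mapsto\langle v_L(t),f\rangle$ indexed by the unit ball of $\mathcal{V}^{-s}$ and then control $\mathbb{E}\sup$ by chaining, which is delicate here and would not yield the explicit prefactor. The paper instead projects onto the eigenbasis of $A$, obtains a one-dimensional Bernstein-type estimate for each mode $\psi_k=\langle \psi_0,e_k\rangle$ via the martingale/time-splitting argument you allude to (Lemma~\ref{estlm2}), and then takes a union bound over modes with weights $h_k^2=C(\eta,s)h^2\lambda_k^{-1+s+\eta/2}$ (Theorem~\ref{nonathmlin}). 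That weighted sum is what produces the explicit structure $C_{h^2/(q_*\sigma^2)}\sim (T/\epsilon)(h^2/(q_*\sigma^2))^{1/2}$ needed later for the moment estimates; a Borell--TIS route, even if made rigorous, would not deliver this directly.

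Second, the source of the $\epsilon^{-\nu}$. It does not come from the Young constant $c(r,s,q)$ nor from a rescaling; it comes from the analytic smoothing of the evolution system at the slow time scale. The nonlinearity $G(\psi)=-3(\bar u+\eta\psi)\psi^2$ is only controlled in a \emph{weak} norm $A^r$ via Lemma~\ref{young-lm}, with the constraint $0<r<3s-1$; this forces $s>\tfrac13$ (not $\tfrac14$), and the case $s\leq\tfrac13$ is handled by inclusion into a larger exponent. The Duhamel representation of $R$ then uses the smoothing estimate~\eqref{easier} to return from $A^r$ to $A^q$ with $q\geq s$, and it is precisely this step that contributes the factor $\epsilon^{(q-r)/2-1}=\epsilon^{-\nu}$. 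A plain Gronwall on $\|R\|_{A^s}$ would miss this mechanism entirely. Finally, the split is $h=h_1+h_2$ with $h_2=M(r,q)c(r,s)\epsilon^{-\nu}h^2$ chosen so that $\mathbb{P}(\sup\|\psi-\psi_*\|_{A^s}>h_2)=0$ on the stopped event; this is what puts the factor $(1-O(h/\epsilon^\nu))$ \emph{inside} the exponent, whereas your preliminary $h/2$--$h/2$ split would only yield $\kappa/4$ there.
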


The inequality obtained in the previous theorem does not require $h^2\gg q_* \sigma^2$ but such assumption simplifies the dependence of $C_{\frac{h^2}{q_* \sigma^2}}(\kappa,T,\epsilon,s)$ on $\dfrac{h^2}{q_* \sigma^2}$, otherwise nontrivial, and enables further study on the moments of the exit-time.\\
%\footnote{Such condition can be discarded by using the inequality $\eqref{single dim}$ following the steps of \cite{berglund2022stochastic}. In such case the dependence of $h$ and $\sigma$ on the bound are less simple.}
The proof of such error estimate is based on \cite[Proposition 2.4]{berglund2022stochastic} which is divided in the study of the linear problem  obtained from $\eqref{nonasyst}$ and the extension of the results to the nonlinear cases.
\paragraph{Preliminaries}\mbox{}\\
We define $\psi:=u-\bar{u}$ for $u$ solution of $\eqref{nonasyst}$ and $\bar{u}$ solution of $\eqref{nonasystdet}$ with initial conditions $\bar{u}(\cdot,0)=u(\cdot,0)=u_0$. Then $\psi$ is solution of
\begin{equation}\label{nonasystpsi}\begin{cases}
\txtd \psi(x,t)=\dfrac{1}{\epsilon}\big(\Delta \psi(x,t) + \alpha(t) \psi(x,t) - g(x) \psi(x,t) -(u(x,t)^3-\bar{u}(x,t)^3)\big) ~\txtd t +\dfrac{\sigma}{\sqrt\epsilon} ~\txtd W_t\\
\psi(x,0)=0\;\;\;\;,\;\;\;\;\forall x\in\mathcal{O}\\
\left.\psi(\cdot,t)\right|_{\partial\mathcal{O}}=0\;\;\;\;,\;\;\;\;\forall t\geq0\;\;.
\end{cases}\end{equation}
The first equation of the system $\eqref{nonasystpsi}$ is equivalent, by Taylor's formula, to
\begin{equation*}
\txtd \psi(x,t)=\Big(\Delta \psi(x,t) + \alpha(\epsilon t) \psi(x,t) - g(x) \psi(x,t) -3\bar{u}(x,t)^2\psi(x,t) +G(\psi(x,t))\Big)~\txtd t +\sigma ~ \txtd W_t,
\end{equation*}
with
\begin{equation}\label{defb}
G(\psi):=\left.\dfrac{1}{2}\partial_\phi^2 \Big(-\phi^3\Big)\right|_{\bar{u}+\eta \psi}\psi^2=-3(\bar{u}+\eta \psi)\psi^2
\end{equation}
for a certain $0<\eta<1$.

\subsection{Linear case}
Suppose $\psi_0$ is a solution of
\begin{equation}\label{nonasystlin}\begin{cases}
\txtd \psi_0(x,t)=\dfrac{1}{\epsilon}(\Delta \psi_0(x,t) + \alpha(t) \psi_0(x,t) - g(x) \psi_0(x,t)) ~\txtd t +\dfrac{\sigma}{\sqrt\epsilon} ~\txtd W_t\\
\psi_0(\cdot,0)\equiv 0\in \mathcal{H}\\
\left.\psi_0(\cdot,t)\right|_{\partial\mathcal{O}}=0\;\;\;\;,\;\;\;\;\forall t\geq0\;\;.
\end{cases}\end{equation}
Then $\psi_k:=\langle \psi_0,e_k\rangle $ satisfies
\begin{equation}\label{prelm2}\begin{split}
& \txtd \psi_k(t)=\dfrac{1}{\epsilon}(-\lambda_k \psi_k(t) + \alpha(t) \psi_k(t))~\txtd t +\dfrac{\sigma}{\sqrt\epsilon} ~\txtd \langle W_t,e_k\rangle \\
&= \dfrac{1}{\epsilon}(-\lambda_k \psi_k(t) + \alpha(t) \psi_k(t))~\txtd t +\dfrac{\sigma}{\sqrt\epsilon}~ \txtd \Big(\sum_{j=1}^\infty \sqrt{q_j} \beta_j(t) \langle b_j,e_k\rangle \Big)\;\;,
\end{split}\end{equation}
for a family of independent Wiener processes $\{\beta_j\}_{j\in\mathbb{N}\setminus\{0\}}$. We can now prove the following lemmas.

\begin{lm}\label{estlm1}
There exists a constant $c_0>0$ so that 
\begin{equation*}
\textnormal{Var}(\psi_k(t))\leq c_0 q_* \dfrac{\sigma^2}{\lambda_k}
\end{equation*}
for $\textnormal{Var}$ that indicates the variance, $q_*:=\sup_j \{q_j\}$, all $0\leq t\leq \tau$ and $k\in\mathbb{N}\setminus\{0\}$.
\end{lm}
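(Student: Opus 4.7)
The plan is to treat \eqref{prelm2} as a scalar Ornstein--Uhlenbeck type SDE with time-dependent drift and additive noise, and to apply the It\^o isometry to compute the variance explicitly, then to use the strict inequality $\alpha(t)<\lambda_1\leq \lambda_k$ to obtain the $\lambda_k^{-1}$ decay.

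First, I would solve \eqref{prelm2} via the integrating-factor method. Since $\psi_k(0)=0$, the mild form of the equation is
\begin{equation*}
\psi_k(t)=\frac{\sigma}{\sqrt{\epsilon}}\int_0^t \exp\!\left(\frac{1}{\epsilon}\int_s^t(\alpha(r)-\lambda_k)\,\txtd r\right) \txtd M_s,
\end{equation*}
where $M_t:=\sum_{j=1}^\infty \sqrt{q_j}\,\langle b_j,e_k\rangle\,\beta_j(t)$ is a continuous martingale built from the independent Brownian motions $\{\beta_j\}_{j\in\mathbb{N}\setminus\{0\}}$. Using independence of the $\beta_j$ and the Parseval identity for $\{b_j\}$, the quadratic variation is
\begin{equation*}
\langle M\rangle_t = t\sum_{j=1}^\infty q_j\langle b_j,e_k\rangle^2 = t\,\langle Qe_k,e_k\rangle \leq t\,q_*,
\end{equation*}
the last inequality following since $q_*$ bounds the operator norm of the self-adjoint $Q$ diagonalised in the basis $\{b_j\}$.

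Next, I would apply the It\^o isometry to get
\begin{equation*}
\textnormal{Var}(\psi_k(t)) = \frac{\sigma^2}{\epsilon}\langle Qe_k,e_k\rangle \int_0^t \exp\!\left(\frac{2}{\epsilon}\int_s^t(\alpha(r)-\lambda_k)\,\txtd r\right)\txtd s.
\end{equation*}
The heart of the argument is a uniform (in $k$ and $t\in[0,\tau]$) lower bound on $\lambda_k-\alpha(t)$ in terms of $\lambda_k$. Since $\alpha\in\mathcal{C}([0,\tau])$ with $\alpha(t)<\lambda_1$, continuity on a compact interval yields $\alpha_{\max}:=\sup_{t\in[0,\tau]}\alpha(t)<\lambda_1$. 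Setting $c_2:=1-\alpha_{\max}/\lambda_1>0$ (or $c_2:=1$ if $\alpha_{\max}\leq 0$), the monotonicity $\lambda_k\geq\lambda_1$ and positivity of the eigenvalues give
\begin{equation*}
\lambda_k-\alpha(t)\geq \lambda_k\left(1-\frac{\alpha_{\max}}{\lambda_k}\right)\geq c_2\,\lambda_k
\end{equation*}
uniformly in $k\in\mathbb{N}\setminus\{0\}$ and $t\in[0,\tau]$. Plugging this into the exponential gives the straightforward estimate
\begin{equation*}
\textnormal{Var}(\psi_k(t)) \leq \frac{\sigma^2 q_*}{\epsilon}\int_0^t \exp\!\left(-\frac{2c_2\lambda_k(t-s)}{\epsilon}\right)\txtd s \leq \frac{q_*\sigma^2}{2c_2\lambda_k},
\end{equation*}
so $c_0:=1/(2c_2)$ is the desired constant.

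The only genuine subtlety is producing the uniform bound $\lambda_k-\alpha(t)\geq c_2\lambda_k$; once that is in hand, the remaining steps are standard linear-SDE calculations. I would flag that the constant $c_2$ (and hence $c_0$) depends on the profile of $\alpha$ through $\alpha_{\max}$ but not on $k$, $\sigma$, $\epsilon$, or $t$, which is exactly what is needed for this lemma to feed into the subsequent exit-time estimate in Theorem \ref{error estimate}.
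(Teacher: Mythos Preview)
Your proposal is correct and follows essentially the same route as the paper: both compute the variance via the mild solution/It\^o isometry to get $\frac{\sigma^2}{\epsilon}\langle Qe_k,e_k\rangle\int_0^t \exp\bigl(\tfrac{2}{\epsilon}\int_s^t(\alpha(r)-\lambda_k)\,\txtd r\bigr)\txtd s$, bound $\langle Qe_k,e_k\rangle\le q_*$ and $\alpha(r)\le\alpha_{\max}<\lambda_1$, and integrate the resulting exponential. The only cosmetic difference is that you make the constant $c_0=1/(2c_2)$ with $c_2=1-\alpha_{\max}/\lambda_1$ explicit, whereas the paper passes from $\tfrac{q_*\sigma^2}{2(\lambda_k-\alpha_{\max})}$ directly to $c_0 q_*\sigma^2/\lambda_k$ without naming the constant.
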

\begin{proof}
By definition,
\begin{equation*}\begin{split}
&\text{Var}(\psi_k(t))=\dfrac{\sigma^2}{\epsilon}\int_0^t \Big\langle e_k, \textnormal{e}^{\frac{1}{\epsilon}\big(A(t-t_1)+\int_{t_1}^t \alpha(t_2)\txtd t_2\big)} Q \textnormal{e}^{\frac{1}{\epsilon}\big(A(t-t_1)+\int_{t_1}^t \alpha(t_2)\txtd t_2\big)} e_k\Big\rangle  \txtd t_1 \\
&=\dfrac{\sigma^2}{\epsilon}\int_0^t \Big\langle \textnormal{e}^{\frac{1}{\epsilon}\big(-\lambda_k(t-t_1)+\int_{t_1}^t \alpha(t_2) \txtd t_2\big)}e_k, Q \textnormal{e}^{\frac{1}{\epsilon}\big(-\lambda_k(t-t_1)+\int_{t_1}^t \alpha(t_2) \txtd t_2\big)} e_k\Big\rangle  \txtd t_1\\
&=\dfrac{\sigma^2}{\epsilon}\int_0^t \textnormal{e}^{\frac{2}{\epsilon}\big(-\lambda_k(t-t_1)+\int_{t_1}^t \alpha(t_2) \txtd t_2\big)} \langle e_k,Q e_k\rangle  \txtd t_1
\leq \dfrac{\sigma^2}{\epsilon} q_*\int_0^t \textnormal{e}^{\frac{2}{\epsilon}\big(-\lambda_k+\underset{{0\leq t_2\leq\tau}}{\max}\{\alpha(t_2)\}\big)(t-t_1)} \txtd t_1\\
& = q_*\dfrac{\sigma^2}{2 \big(\lambda_k-\underset{{0\leq t_1\leq \tau}}{\max}\{\alpha(t_1)\}\big)}\Big(1-\textnormal{e}^{\frac{2}{\epsilon}\big(-\lambda_k+\underset{{0\leq t_1\leq \tau}}{\max}\{\alpha(t_1)\}\big)t}\Big)
\leq c_0 q_* \dfrac{\sigma^2}{\lambda_k}\;\;,
\end{split}\end{equation*}
for $0\leq t \leq \tau$ and $k\in\mathbb{N}\setminus\{0\}$.
\end{proof}

The subsequent lemma relies on methods presented in \cite{berglund2022stochastic} and part of the proof follows \cite[Theorem 2.4]{berglund2002pathwise}. The generality in noise requires additional steps, due in particular to the fact that the eigenfunctions of $A$ do not diagonalize $Q$.\footnote{Such steps do not require property \ref{property1}. Property \ref{property2} can be removed through the addition of considerations for particular cases in which $\langle e_k,Q e_k\rangle=0$ for a certain $k\in\mathbb{N}\setminus\{0\}$.}

\begin{lm}\label{estlm2}
Given $c^+>0$ and $0 \leq T \leq \tau$ for which $\lambda_k-\underset{0\leq t \leq T}{\textnormal{min}} \{\alpha(t)\}\leq c^+ \lambda_k$ for all $k\in\mathbb{N}\setminus\{0\}$, then there exists a constant $\gamma_0>0$ that satisfies
\begin{equation}\label{single dim}
\mathbb{P}\bigg(\sup_{0\leq t\leq T}\lvert \psi_k(t) \rvert\geq h \bigg)\leq \dfrac{2 c^+ \lambda_k T}{\gamma \epsilon} \textnormal{exp}\bigg\{-\dfrac{\textnormal{e}^{-2\gamma}}{2 c_0} \lambda_k \dfrac{h^2}{q_* \sigma^2}\bigg\}
\end{equation}
for any $0<\gamma\leq\gamma_0$, for $q_*:=\underset{j}{\sup} \{q_j\}$ and $c_0$ obtained in Lemma \ref{estlm1}.
\end{lm}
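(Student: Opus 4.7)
The plan is to exploit Gaussianity of $\psi_k$ and apply a union bound over a fine partition of $[0,T]$, following the scalar strategy of \cite[Theorem 2.4]{berglund2002pathwise}. By linearity of \eqref{prelm2}, $\psi_k$ is a centered Gaussian process admitting the explicit It\^o representation
\[
\psi_k(t)=\dfrac{\sigma}{\sqrt\epsilon}\int_0^t U(t,s)\,\txtd\tilde W^{(k)}_s,\qquad U(t,s):=\textnormal{e}^{-\frac{1}{\epsilon}\int_s^t(\lambda_k-\alpha(r))\,\txtd r},
\]
driven by the scalar Brownian motion $\tilde W^{(k)}_s:=\sum_{j=1}^\infty\sqrt{q_j}\langle b_j,e_k\rangle\beta_j(s)$, whose quadratic variation is $\langle e_k,Qe_k\rangle s\leq q_* s$. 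Lemma \ref{estlm1} already provides the pointwise variance bound $c_0 q_*\sigma^2/\lambda_k$.

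Next, fix $0<\gamma\leq\gamma_0$, set $\Delta:=\gamma\epsilon/(c^+\lambda_k)$, and partition $[0,T]$ with grid points $t_n=n\Delta$; there are at most $N:=\lceil c^+\lambda_k T/(\gamma\epsilon)\rceil$ subintervals, and the hypothesis $\lambda_k-\underset{0\leq t\leq T}{\min}\{\alpha(t)\}\leq c^+\lambda_k$ forces $U(t_{n+1},t_n)\geq\textnormal{e}^{-\gamma}$. On each subinterval I would use the decomposition
\[
\psi_k(t)=U(t,t_n)\psi_k(t_n)+R_n(t),\qquad R_n(t):=\dfrac{\sigma}{\sqrt\epsilon}\int_{t_n}^t U(t,s)\,\txtd\tilde W^{(k)}_s,
\]
with $R_n$ independent of $\mathcal F_{t_n}$, and split the excursion event $\{\sup_{[t_n,t_{n+1}]}\lvert\psi_k(t)\rvert\geq h\}$ into the two events $\{\lvert\psi_k(t_n)\rvert\geq\textnormal{e}^{-\gamma}h\}$ and $\{\sup_{[t_n,t_{n+1}]}\lvert R_n(t)\rvert\geq(1-\textnormal{e}^{-\gamma})h\}$.

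The first event is controlled directly by the Gaussian tail of $\psi_k(t_n)$ together with Lemma \ref{estlm1}, producing a bound of the form $C\exp\{-\textnormal{e}^{-2\gamma}\lambda_k h^2/(2c_0 q_*\sigma^2)\}$. For the second event I would rewrite $R_n(t)=U(t,t_n)M_n(t)$ with $M_n(t):=(\sigma/\sqrt\epsilon)\int_{t_n}^t U(s,t_n)^{-1}\,\txtd\tilde W^{(k)}_s$ a continuous martingale of quadratic variation at most $\textnormal{e}^{2\gamma}q_*\sigma^2\Delta/\epsilon$, and apply the Bernstein-type (Doob plus Gaussian tail for stochastic integrals) inequality to obtain an estimate of the same form. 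A union bound over the $N$ subintervals then produces the prefactor $2c^+\lambda_k T/(\gamma\epsilon)$ and yields \eqref{single dim}.

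The main technical obstacle is extracting the geometric factor $\textnormal{e}^{-2\gamma}$ in the exponent while maintaining only a polynomial prefactor in $\gamma^{-1}$: the threshold splitting $h=\textnormal{e}^{-\gamma}h+(1-\textnormal{e}^{-\gamma})h$, the partition length $\Delta$ and the quadratic variation bound on $M_n$ must all be tuned compatibly, which is ultimately a careful constant-tracking exercise. A secondary subtlety, as emphasized in the footnote, is that the eigenfunctions $\{e_k\}$ of $A$ need not diagonalize $Q$; hence one must use the scalar bound $\langle e_k,Qe_k\rangle\leq q_*$ throughout rather than an eigenvalue identity, but this introduces only the factor $q_*$ and no further difficulty.
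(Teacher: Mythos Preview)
Your outline has a genuine gap in the increment estimate. With $\Delta=\gamma\epsilon/(c^+\lambda_k)$ the quadratic variation of $M_n$ on $[t_n,t_{n+1}]$ is of order $q_*\sigma^2\gamma/\lambda_k$, while the threshold you assign to the increment is $(1-\textnormal{e}^{-\gamma})h\sim\gamma h$. Hence the Bernstein exponent for the second event is
\[
\frac{(1-\textnormal{e}^{-\gamma})^2h^2}{2\,\textnormal{e}^{2\gamma}q_*\sigma^2\Delta/\epsilon}
=\frac{c^+(1-\textnormal{e}^{-\gamma})^2}{2\gamma\,\textnormal{e}^{2\gamma}}\cdot\frac{\lambda_k h^2}{q_*\sigma^2}
\;\sim\;\frac{c^+\gamma}{2}\cdot\frac{\lambda_k h^2}{q_*\sigma^2}\qquad(\gamma\to 0),
\]
which vanishes with $\gamma$ rather than staying of order $\lambda_k h^2/(q_*\sigma^2)$. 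So the increment term is \emph{not} ``of the same form'' as the endpoint term; it dominates the union bound precisely in the small-$\gamma$ regime that the lemma and its optimization in \eqref{control}--\eqref{better} require. No retuning of the split $h=\theta h+(1-\theta)h$ rescues the exact factor $\textnormal{e}^{-2\gamma}$: balancing the two exponents forces $\theta\sim\sqrt{\gamma}$ and yields at best $\textnormal{e}^{-c\sqrt\gamma}$ in the exponent, which is a different (weaker) statement than \eqref{single dim}.

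The paper avoids the split altogether. Writing $D(t)=-\lambda_k t+\int_0^t\alpha$, the process $\textnormal{e}^{-D(t)/\epsilon}\psi_k(t)$ is a martingale on all of $[0,T]$; on each subinterval one bounds
\[
\sup_{s_{j-1}\le t\le s_j}\lvert\psi_k(t)\rvert
\le \textnormal{e}^{D(s_{j-1})/\epsilon}\sup_{0\le t\le s_j}\Bigl\lvert\textnormal{e}^{-D(t)/\epsilon}\psi_k(t)\Bigr\rvert
\]
and applies Bernstein once to the full martingale over $[0,s_j]$. The key point is that the resulting quadratic variation equals $\textnormal{e}^{-2D(s_j)/\epsilon}\,\textnormal{Var}(\psi_k(s_j))$, so Lemma \ref{estlm1} converts it directly into $c_0 q_*\sigma^2/\lambda_k$, and the only loss is the ratio $\textnormal{e}^{2(D(s_j)-D(s_{j-1}))/\epsilon}=\textnormal{e}^{-2\gamma}$ coming from the partition chosen so that $D(s_j)-D(s_{j-1})=-\gamma\epsilon$. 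That is how the precise factor $\textnormal{e}^{-2\gamma}$ arises with a single tail estimate per subinterval, instead of two competing ones.
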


\begin{proof}
For fixed $k\in\mathbb{N}\setminus\{0\}$, the solution $\psi_k$ of $\eqref{prelm2}$ is a Gaussian process. In fact, we have 
\begin{equation}\label{wiener}
\sum_{j=1}^\infty \sqrt{q_j} \langle b_j,e_k\rangle  \beta_j(t)=\sqrt{\sum_{j=1}^\infty q_j \langle b_j,e_k\rangle^2} \; \beta(t)
\end{equation}
for a Wiener process $\beta$ and $0\leq t\leq \tau$. The equation $\eqref{wiener}$ can be proven with the following considerations:
\begin{itemize}
\item For any $0\leq t\leq \tau$ and $n\in\mathbb{N}\setminus\{0\}$ we define $\Xi_n(t)=\sum_{j=1}^n \sqrt{q_j} \langle b_j,e_k\rangle \beta_j(t)$. Then, given the integers $n>m>0$,
\begin{equation*}
\mathbb{E}\big(\lvert \Xi_n(t)-\Xi_m(t)\rvert^2\big)=t \sum_{j=m+1}^n q_j \langle b_j,e_k\rangle^2\leq\tau\sup_j\{q_j\}=\tau q_*\;.
\end{equation*}
Hence $\Xi_n(t)$ converges to a random variable $\Xi(t)$ in $L^2(\Omega)$ for all $0 \leq t \leq \tau$.
\item It is clear that 
\begin{equation*}
\Xi(0)=0
\end{equation*}
and that the series has independent increments.
\item The time increments of $\Xi$ from $t_1$ to $t_2$ with $t_1<t_2 \leq \tau$ are normally distributed with mean $0$ and with variance $(t_2-t_1) \sum_{j=1}^\infty q_j \langle b_j,e_k\rangle ^2$. This can be proven through the Lèvy continuity theorem and the pointwise convergence in time of the characteristic functions of $\Xi_n(t)$.
\item The almost sure continuity of $\Xi$ is implied by the Borel-Cantelli Lemma and by the almost sure continuity of all elements in $\{\Xi_n\}_{n\in\mathbb{N}\setminus\{0\}}$.
\end{itemize}
From $\eqref{prelm2}$ and $\eqref{wiener}$ we can state that $\psi_k$ is represented by Duhamel's formula,
\begin{equation*}
\psi_k(t)=\dfrac{\sigma}{\sqrt\epsilon}\sqrt{\sum_{j=1}^\infty q_j \langle b_j,e_k\rangle^2}\int_0^t \textnormal{e}^{\frac{1}{\epsilon}\big(-\lambda_k(t-t_1)+\int_{t_1}^t \alpha(t_2) \txtd t_2\big)} \txtd\beta(t_1)\;\;.
\end{equation*}
Due to the dependence on time $t$ of $-\lambda_k(t-t_1)+\int_{t_1}^t\alpha(t_2) \txtd t_2$, $\psi_k$ is not a martingale. Following \cite[Proposition 3.1.5]{berglund2006noise} we will study the martingale $\textnormal{e}^{\frac{1}{\epsilon}\big(\lambda_k t-\int_0^t\alpha(t_1)\txtd t_1 \big)}\psi_k$ and split the time interval in $0=s_0<s_1<...<s_N=T$. For convenience, we define for all $0\leq t\leq T$, $D(t):=-\lambda_k t+\int_0^t\alpha(t_1) \txtd t_1$. Hence
\begin{equation*}\begin{split}
&\mathbb{P}\bigg(\sup_{0\leq t\leq T}\lvert \psi_k(t) \rvert\geq h \bigg)\\
&=\mathbb{P}\bigg(\sup_{0\leq t\leq T}\Bigg\lvert \dfrac{\sigma}{\sqrt\epsilon}\sqrt{\sum_{j=1}^\infty q_j \langle b_j,e_k\rangle^2}\int_0^t \textnormal{e}^{\frac{1}{\epsilon}\big(-\lambda_k(t-t_1)+\int_{t_1}^t \alpha(t_2)\txtd t_2\big)} \txtd\beta(t_1) \Bigg\rvert\geq h \bigg)\\
&=\mathbb{P}\bigg(\exists j\in\{1,...,N\}: \sup_{s_{j-1}<t<s_j}\Bigg\lvert \dfrac{\sigma}{\sqrt\epsilon}\langle e_k,Q e_k\rangle^\frac{1}{2}\int_0^t \textnormal{e}^{\frac{1}{\epsilon}\big(D(t)-D(t_1)\big)} \txtd\beta(t_1) \Bigg\rvert\geq h \bigg)
\end{split}\end{equation*}
\begin{equation*}\begin{split}
&\leq\sum_{j=1}^N \mathbb{P}\bigg(
\dfrac{\sigma}{\sqrt\epsilon}\langle e_k,Q e_k\rangle^\frac{1}{2} \sup_{s_{j-1}<t<s_j}\Bigg\lvert \int_0^t \textnormal{e}^{\frac{1}{\epsilon}\big(D(t)-D(t_1)\big)} \txtd\beta(t_1) \Bigg\rvert\geq h \bigg)\\
&=2\sum_{j=1}^N \mathbb{P}\bigg(
\dfrac{\sigma}{\sqrt\epsilon}\langle e_k,Q e_k\rangle^\frac{1}{2} \sup_{s_{j-1}<t<s_j} \int_0^t \textnormal{e}^{\frac{1}{\epsilon}\big(D(t)-D(t_1)\big)} \txtd\beta(t_1) \geq h \bigg)\\
&\leq 2\sum_{j=1}^N \mathbb{P}\bigg(\langle e_k,Q e_k\rangle^{\frac{1}{2}}
 \sup_{s_{j-1}<t<s_j} \int_0^t \textnormal{e}^{-\frac{1}{\epsilon}D(t_1)} \txtd\beta(t_1) \geq \inf_{s_{j-1}<t<s_j} \textnormal{e}^{-\frac{1}{\epsilon}D(t)} \dfrac{\sqrt\epsilon}{\sigma} h \bigg)\\
 &=2\sum_{j=1}^N \mathbb{P}\bigg(\langle e_k,Q e_k\rangle^{\frac{1}{2}}
 \sup_{s_{j-1}<t<s_j} \int_0^t \textnormal{e}^{-\frac{1}{\epsilon}D(t_1)} \txtd\beta(t_1) \geq \textnormal{e}^{-\frac{1}{\epsilon}D(s_{j-1})} \dfrac{\sqrt\epsilon}{\sigma} h \bigg)\;\;.
\end{split}\end{equation*}
We can now apply a Bernstein-type inequality (\cite{berglund2006noise}). Therefore,

\begin{equation*}\begin{split}
& \mathbb{P}\bigg(\sup_{0\leq t\leq T}\lvert \psi_k(t) \rvert\geq h \bigg)\leq
2\sum_{j=1}^N \mathbb{P}\bigg(
 \sup_{0<t<s_j} \int_0^t \textnormal{e}^{-\frac{1}{\epsilon}D(t_1)} \txtd\beta(t_1) \geq \; \langle e_k,Q e_k\rangle^{-\frac{1}{2}} \textnormal{e}^{-\frac{1}{\epsilon}D(s_{j-1})} \dfrac{\sqrt\epsilon}{\sigma} h \bigg)\\
& \leq 2\sum_{j=1}^N \text{exp}\bigg\{- \dfrac{\epsilon h^2 \textnormal{e}^{-\frac{2}{\epsilon}D(s_{j-1})}}{2 \sigma^2 \langle e_k,Q e_k\rangle \int_0^{s_j} \textnormal{e}^{-\frac{2}{\epsilon}D(t_1)}\txtd t_1} \bigg\}=
2\sum_{j=1}^N \text{exp}\bigg\{- \dfrac{\epsilon h^2 \textnormal{e}^{\frac{2}{\epsilon}(D(s_j)-D(s_{j-1}))}}{2 \sigma^2 \langle e_k,Q e_k\rangle \int_0^{s_j} \textnormal{e}^{\frac{2}{\epsilon}(D(s_j)-D(t_1))}\txtd t_1} \bigg\}\\
& =2\sum_{j=1}^N \text{exp}\bigg\{- \dfrac{ h^2 \textnormal{e}^{\frac{2}{\epsilon}(D(s_j)-D(s_{j-1}))}}{2\; \text{Var}(\psi_k(s_j))} \bigg\}
\leq 2\sum_{j=1}^N \text{exp}\bigg\{- \dfrac{ h^2 \lambda_k \textnormal{e}^{\frac{2}{\epsilon}(D(s_j)-D(s_{j-1}))}}{2 c_0 q_* \sigma^2} \bigg\}\;\;,
\end{split}\end{equation*}
for which we used Lemma \ref{estlm1} in the last inequality. By assumption we know that $-\lambda_k+\alpha(t)<0$ for all $k\in\mathbb{N}\setminus\{0\}$ and $0\leq t\leq T$. We can set the sequence of times $\{s_j\}_{j=0}^N$ so that there exists $\gamma_0>0$ for which $\dfrac{\lambda_k T-\int_0^T \alpha(t_1)\txtd t_1}{\gamma_0\epsilon}=N\in\mathbb{N}\setminus\{0\}$ and
\begin{equation*}
-\lambda_k(s_{j+1}-s_j)+\int_{s_j}^{s_{j+1}}\alpha(t_1)\txtd t_1=-\gamma_0 \epsilon\;\;\;\;\;\;\forall j\in\{0,...,N-1\}\;.
\end{equation*}
In conclusion, for $\gamma_0>0$ small enough, such choice leads to

\begin{equation*}
\mathbb{P}\bigg(\sup_{0\leq t\leq T}\lvert \psi_k(t) \rvert\geq h \bigg)
\leq 2 \dfrac{\lambda_k T-\int_0^T \alpha(t_1)\txtd t_1}{\gamma_0\epsilon} \text{exp}\bigg\{- \dfrac{ h^2 \lambda_k \textnormal{e}^{-2\gamma_0}}{2 c_0 q_* \sigma^2} \bigg\}
\leq \dfrac{2 c^+ \lambda_k T}{\gamma \epsilon} \text{exp}\bigg\{- \dfrac{ h^2 \lambda_k \textnormal{e}^{-2\gamma}}{2 c_0 q_* \sigma^2} \bigg\}
\end{equation*}
for any $\gamma_0\geq \gamma$.

\end{proof}
The inequality $\eqref{single dim}$ can be treated as follows:
\begin{equation}\label{control}
\begin{split}
&\mathbb{P}\bigg(\sup_{0\leq t\leq T}\lvert \psi_k(t) \rvert\geq h \bigg)
\leq \dfrac{2 c^+ \lambda_k T}{\gamma \epsilon} \text{exp}\bigg\{- \dfrac{ h^2 \lambda_k \textnormal{e}^{-2\gamma}}{2 c_0 q_* \sigma^2}\bigg\}\\
&\leq \dfrac{2 c^+ \lambda_k T}{\gamma \epsilon} \text{exp}\bigg\{- \dfrac{ h^2 \lambda_k}{2 c_0 q_* \sigma^2}\bigg\}\text{exp}\bigg\{\dfrac{ h^2 \lambda_k \gamma}{c_0 q_* \sigma^2}\bigg\}\;,
\end{split}
\end{equation}
for assumptions equivalent to the previous lemma. Moreover, for $\dfrac{h^2}{q_* \sigma^2}$ big enough, $\eqref{control}$ can be optimized on $\gamma$ at $\gamma=\dfrac{c_0 q_* \sigma^2}{h^2 \lambda_k}$, resulting in
\begin{equation}\label{better}
\mathbb{P}\bigg(\sup_{0\leq t\leq T}\lvert \psi_k(t) \rvert\geq h \bigg)
\leq \dfrac{h^2}{q_* \sigma^2}\dfrac{2 c^+ \lambda_k^2 e T}{c_0 \epsilon} \text{exp}\bigg\{- \dfrac{h^2}{q_* \sigma^2}\dfrac{\lambda_k}{2 c_0}\bigg\}\;.
\end{equation}
The previous lemmas are sufficient to prove the subsequent theorem whose proof follows the steps of \cite[Theorem 2.4 in the linear case]{berglund2022stochastic}. 

\begin{thm}\label{nonathmlin}
For any $0<s<\dfrac{1}{2}$, $0<\epsilon$, $q_*\sigma^2\ll h^2$, $0 \leq T \leq \tau$ there exist the constants $0<\kappa(s), C_{\frac{h^2}{q_* \sigma^2}}(\kappa,T,\epsilon,s)$ such that the solution $\psi_0$ of $\eqref{nonasystlin}$ satisfies
\begin{equation*}
\mathbb{P}\bigg(\sup_{0\leq t\leq T}\lvert\lvert \psi_0(\cdot,t) \rvert\rvert_{A^s} \geq h \bigg)
\leq C_{\frac{h^2}{q_* \sigma^2}}(\kappa,T,\epsilon,s)\textnormal{e}^{-\kappa(s)\frac{h^2}{q_*\sigma^2}}\;\;.
\end{equation*}
\end{thm}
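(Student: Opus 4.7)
The plan is to reduce the $A^{s}$-norm estimate to the scalar bounds of Lemma \ref{estlm2} and its optimised form \eqref{better} via a union bound over the spectral decomposition of $\psi_0$. Writing $\psi_k(t):=\langle\psi_0(\cdot,t),e_k\rangle$, we have
\[
\|\psi_0(\cdot,t)\|_{A^s}^2=\sum_{k=1}^\infty \lambda_k^s\,\psi_k(t)^2,
\]
so if we pick any positive sequence $\{h_k\}$ with $\sum_{k}\lambda_k^s h_k^2\le h^2$, then the inclusion of events
\[
\Bigl\{\sup_{0\le t\le T}\|\psi_0(\cdot,t)\|_{A^s}\ge h\Bigr\}
\subseteq\bigcup_{k=1}^\infty\Bigl\{\sup_{0\le t\le T}|\psi_k(t)|\ge h_k\Bigr\}
\]
follows from $\sup_{t}\sum_{k}\lambda_k^s\psi_k(t)^2\le\sum_{k}\lambda_k^s\sup_{t}\psi_k(t)^2$. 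A single application of a union bound then reduces the problem to the $k$-th component bounds established earlier.

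The key step is choosing $\{h_k\}$ so that (i) the normalisation $\sum_{k}\lambda_k^s h_k^2\le h^2$ holds, (ii) the Gaussian tails from \eqref{better} produce a common leading factor $\exp(-\kappa(s)h^2/(q_*\sigma^2))$, and (iii) the remaining series converges. Using $\lambda_k\sim k^2$, I would take
\[
h_k^2=\frac{h^2}{K_s}\,\lambda_k^{-s-\eta},\qquad K_s:=\sum_{j=1}^\infty \lambda_j^{-\eta},
\]
for some $\eta\in(\tfrac{1}{2},\,1-s)$; note such $\eta$ exists precisely because $s<\tfrac{1}{2}$, which is where this hypothesis on $s$ is used. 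Condition (i) is then exactly $\sum_k\lambda_k^{-\eta}/K_s=1$. Inserting these $h_k$ into \eqref{better} yields, for every $k$,
\[
\mathbb{P}\Bigl(\sup_{0\le t\le T}|\psi_k(t)|\ge h_k\Bigr)
\le \frac{h^2}{q_*\sigma^2}\,\frac{2c^+eT\,\lambda_k^{2-s-\eta}}{c_0\,K_s\,\epsilon}\,\exp\!\Bigl(-\frac{h^2\,\lambda_k^{1-s-\eta}}{2c_0 K_s\,q_*\sigma^2}\Bigr),
\]
valid under $q_*\sigma^2\ll h^2$ (this is where the optimisation of $\gamma$ in \eqref{control} is legitimate and matches the hypothesis of the theorem).

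The key observation is that $1-s-\eta>0$ with our choice, so $\lambda_k^{1-s-\eta}\ge\lambda_1^{1-s-\eta}$, and we can factor out the $k=1$ exponential. Setting $\kappa(s):=\lambda_1^{1-s-\eta}/(2c_0K_s)$, the union bound gives
\[
\mathbb{P}\Bigl(\sup_{0\le t\le T}\|\psi_0(\cdot,t)\|_{A^s}\ge h\Bigr)
\le e^{-\kappa(s)\frac{h^2}{q_*\sigma^2}}\,\sum_{k=1}^\infty \frac{h^2}{q_*\sigma^2}\,\frac{2c^+eT\,\lambda_k^{2-s-\eta}}{c_0 K_s\,\epsilon}\,\exp\!\Bigl(-\frac{h^2\bigl(\lambda_k^{1-s-\eta}-\lambda_1^{1-s-\eta}\bigr)}{2c_0 K_s\,q_*\sigma^2}\Bigr).
\]
Because $\lambda_k^{1-s-\eta}-\lambda_1^{1-s-\eta}\sim k^{2(1-s-\eta)}\to\infty$ and the polynomial prefactor $\lambda_k^{2-s-\eta}$ is dominated by the super-polynomial Gaussian decay (whenever $h^2/(q_*\sigma^2)$ is bounded below, which is part of the hypothesis), the remaining series converges to a finite quantity $C_{h^2/(q_*\sigma^2)}(\kappa,T,\epsilon,s)$, completing the proof.

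The main technical obstacle is balancing conditions (i)–(iii): the choice of the exponent $\eta$ must simultaneously ensure the normalisation sum $K_s$ converges, leave enough positive exponent $1-s-\eta$ so that the Gaussian weights tame the polynomial prefactor, and do so uniformly in $k$. This triple constraint is exactly what forces the restriction $s<\tfrac12$ (and what makes the constant $\kappa$ depend on $s$ through $\lambda_1^{1-s-\eta}/K_s$).
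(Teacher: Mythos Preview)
Your proof is correct and follows essentially the same strategy as the paper: union bound over spectral modes, per-mode application of the optimised Bernstein estimate \eqref{better}, and an allocation $h_k$ balancing the normalisation against the exponential decay---indeed your choice $h_k^2\propto\lambda_k^{-s-\eta}$ with $\eta\in(\tfrac12,1-s)$ corresponds exactly to the paper's $h_k^2\propto\lambda_k^{-1+s+\eta'/2}$ under the substitution $\eta=1-s-\eta'/2$. The only substantive difference is that the paper, rather than merely factoring out the $k=1$ exponential and asserting convergence of the remainder, carries out an explicit integral estimate of the tail sum to obtain the scaling $C_{h^2/(q_*\sigma^2)}\sim (T/\epsilon)\,(h^2/(q_*\sigma^2))^{1/2}$, which is needed downstream for the moment estimates.
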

\begin{proof}
Set $\eta,\rho>0$ so that $\rho=\dfrac{1}{2}-s$. Then, for any sequence $\{h_k\}_{k\in\mathbb{N}\setminus\{0\}}\subset\mathbb{R}_{>0}$ that satisfies $h^2=\sum_{k=1}^\infty h_k^2$,
\begin{equation*}
\begin{split}
&\mathbb{P}\bigg\{\sup_{0\leq t\leq T}\lvert\lvert\psi_0(\cdot,t)\rvert\rvert_{A^s}\geq h\bigg\}
=\mathbb{P}\bigg\{\sup_{0\leq t\leq T}\lvert\lvert\psi_0(\cdot,t)\rvert\rvert_{A^s}^2\geq h^2\bigg\}
=\mathbb{P}\bigg\{\sup_{0\leq t\leq T}\sum_{k=1}^\infty \lambda_k^s \lvert\psi_k(t)\rvert^2\geq h^2\bigg\}\\
&\leq \sum_{k=1}^\infty \mathbb{P}\bigg\{\sup_{0\leq t\leq T} \lvert\psi_k(t)\rvert^2\geq \dfrac{h_k^2}{\lambda_k^s}\bigg\}
=\sum_{k=1}^\infty \mathbb{P}\bigg\{\sup_{0\leq t\leq T} \lvert\psi_k(t)\rvert \geq \dfrac{h_k}{\lambda_k^\frac{s}{2}}\bigg\}\\
&\leq \sum_{k=1}^\infty \dfrac{h_k^2}{q_* \sigma^2}\dfrac{2 c^+ \lambda_k^{2-s} e T}{c_0 \epsilon} \text{exp}\bigg\{- \dfrac{h_k^2}{q_* \sigma^2}\dfrac{\lambda_k^{1-s}}{2 c_0}\bigg\}\;,
\end{split}
\end{equation*}
for which we used $\eqref{better}$ in the last inequality. We can assume $h_k=C(\eta,s)h^2\lambda_k^{-1+s+\frac{\eta}{2}}$ with
\begin{equation*}
C(\eta,s):=\dfrac{1}{\sum_{k=1}^\infty \lambda_k^{-1+s+\frac{\eta}{2}}}\;.
\end{equation*}
Since $\lambda_k\sim k^2$ from $\eqref{sync}$, we can use Riemann Zeta function $\xi(\nu):=\sum_{k=1}^\infty k^{-\nu}$ to prove that $C(\eta,s)>0$ for any $\eta>0$. In fact, we have $\xi(2-2s-\eta)<\infty$ for any $0<\eta<2\rho$ and
\begin{equation*}
C(\eta,s)=\dfrac{1}{\sum_{k=1}^\infty \lambda_k^{-1+s+\frac{\eta}{2}}}
\propto \dfrac{1}{\sum_{k=1}^\infty k^{-2+2s+\eta}}
=\xi(2-2s-\eta)^{-1}\;,
\end{equation*}
for the symbol $\propto$ that indicates proportionality of the terms for a constant dependent on $g$. For $0<\eta<2\rho$ we can write 
\begin{equation*}
\begin{split}
&\mathbb{P}\bigg\{\sup_{0\leq t\leq T} \lvert\lvert\psi_0(\cdot,t)\rvert\rvert_{A^s}\geq h\bigg\}
\leq \sum_{k=1}^\infty \dfrac{h_k^2}{q_* \sigma^2}\dfrac{2 c^+ \lambda_k^{2-s} e T}{c_0 \epsilon} \text{exp}\bigg\{- \dfrac{h_k^2}{q_* \sigma^2}\dfrac{\lambda_k^{1-s}}{2 c_0}\bigg\}\\
&\leq \dfrac{h^2}{q_* \sigma^2} \dfrac{2 c^+ e T}{c_0 \epsilon} \sum_{k=1}^\infty \lambda_k^{2} \text{exp}\bigg\{- \dfrac{h^2}{q_* \sigma^2}\dfrac{C(\eta,s) \lambda_k^{\frac{\eta}{2}}}{2 c_0}\bigg\}
\leq \ell_T \sum_{k=1}^\infty (1+k^2)^2 \textnormal{e}^{-\ell (1+k^2)^{\frac{\eta}{2}}}\; 
\end{split}
\end{equation*}
for $\ell_T\propto \dfrac{h^2}{q_* \sigma^2} \dfrac{c^+ e T}{c_0 \epsilon}$ and $\ell\propto \dfrac{h^2}{q_* \sigma^2}\dfrac{C(\eta,s)}{c_0}$.
We define 
\begin{equation*}
\iota(x):=(1+x^2)^2\textnormal{e}^{-\ell (1+x^2)^\frac{\eta}{2}}
\end{equation*}
for which we know that
\begin{equation*}
\dfrac{\txtd}{\txtd x}\iota(x)=2x(1+x^2)\Big(2-\ell\dfrac{\eta}{2}(1+x^2)^\frac{\eta}{2}\Big)\textnormal{e}^{-\ell (1+x^2)^\frac{\eta}{2}}\;.
\end{equation*}
We can therefore bound
\begin{equation*}
\sum_{k=1}^\infty \iota(k)\leq \int_0^\infty \iota(x) \txtd x
\end{equation*}
by assuming $\iota$ decreasing in $[0,\infty)$. Such case is satisfied for $\dfrac{h^2}{q_* \sigma^2}$ larger than a constant of order $1$ dependent on the choice of $\eta$ and on $s$. Conversely, the theorem would be trivially proven.\\
\medskip
Setting $x'=\ell(1+x^2)^\frac{\eta}{2}$ and $y=-\ell+x'$ we can state
\begin{equation*}
\begin{split}
&\int_0^\infty \iota(x) \txtd x
=\int_0^\infty (1+x^2)^2\textnormal{e}^{-\ell (1+x^2)^\frac{\eta}{2}} \txtd x
=\dfrac{1}{\ell^\frac{6}{\eta}\eta}\int_\ell^\infty  \dfrac{x'^{\frac{6}{\eta}-1}}{\sqrt{\Big(\frac{x'}{\ell}\Big)^\frac{2}{\eta}-1}} \textnormal{e}^{-{x'}} \txtd x'\\
&=\dfrac{\textnormal{e}^{-\ell}}{\ell^\frac{6}{\eta}\eta}\int_0^\infty  \dfrac{(\ell+y)^{\frac{6}{\eta}-1}}{\sqrt{\Big(1+\frac{y}{\ell}\Big)^\frac{2}{\eta}-1}} \textnormal{e}^{-y} \txtd y
\leq \dfrac{\textnormal{e}^{-\ell}}{\ell^\frac{6}{\eta}\eta}\int_0^\infty  \dfrac{(\ell+y)^{\frac{6}{\eta}-1}}{\sqrt{\frac{2 y}{\eta \ell}}} \textnormal{e}^{-y} \txtd y\\
&=\dfrac{\textnormal{e}^{-\ell}}{\ell^\frac{6}{\eta}\eta}\sqrt{\frac{\eta \ell}{2}}\int_0^\ell  \dfrac{(\ell+y)^{\frac{6}{\eta}-1}}{\sqrt{y}} \textnormal{e}^{-y} \txtd y
+\dfrac{\textnormal{e}^{-\ell}}{\ell^\frac{6}{\eta}\eta}\sqrt{\frac{\eta \ell}{2}}\int_\ell^\infty  \dfrac{(\ell+y)^{\frac{6}{\eta}-1}}{\sqrt{y}} \textnormal{e}^{-y} \txtd y\\
&\leq \dfrac{\textnormal{e}^{-\ell}}{\ell^\frac{6}{\eta}\eta} (2\ell)^{\frac{6}{\eta}-1} \sqrt{\frac{\eta \ell}{2}} \int_0^\ell  \dfrac{1}{\sqrt{y}} \textnormal{e}^{-y} \txtd y
+\dfrac{\textnormal{e}^{-\ell}}{\ell^\frac{6}{\eta}\eta} \sqrt{\frac{\eta \ell}{2}} \int_\ell^\infty  \dfrac{(2y)^{\frac{6}{\eta}-1}}{\sqrt{y}} \textnormal{e}^{-y} \txtd y\\
&\leq c_1(\eta) \ell^{-\frac{1}{2}}\textnormal{e}^{-\ell}+c_2(\eta)\ell^{-\frac{6}{\eta}+\frac{1}{2}}\textnormal{e}^{-\ell}\;,
\end{split}
\end{equation*}
for which we used $\Big(1+\frac{y}{\ell}\Big)^\frac{2}{\eta}-1\geq \frac{2 y}{\eta \ell}$ and took the constants $c_1(\eta),c_2(\eta)>0$ that are uniformly bounded in $\ell$ since $\eta<12$. Such results lead to
\begin{equation*}
\begin{split}
&\mathbb{P}\bigg\{\sup_{0\leq t\leq T} \lvert\lvert\psi_0(\cdot,t)\rvert\rvert_{A^s}\geq h\bigg\}
\leq \ell_T \sum_{k=1}^\infty f(k)
\leq \ell_T \Big( c_1(\eta) \ell^{-\frac{1}{2}}\textnormal{e}^{-\ell}+c_2(\eta)\ell^{-\frac{6}{\eta}+\frac{1}{2}}\textnormal{e}^{-\ell} \Big)\\
&\propto \dfrac{h^2}{q_* \sigma^2} \dfrac{c^+ T}{c_0 \epsilon} \Big( c_1(\eta) \Big(\dfrac{h^2}{q_* \sigma^2}\dfrac{C(\eta,s)}{c_0}\Big)^{-\frac{1}{2}} +c_2(\eta)\Big(\dfrac{h^2}{q_* \sigma^2}\dfrac{C(\eta,s)}{c_0}\Big)^{-\frac{6}{\eta}+\frac{1}{2}} \Big)
\textnormal{e}^{-\ell}\;.
\end{split}
\end{equation*}
The proof is concluded assuming $\eta=\rho=\dfrac{1}{2}-s$, so that $C(\eta,s)\propto\xi\Big(\dfrac{3}{2}-s\Big)^{-1}$, $\kappa(s)\propto\dfrac{C(\eta,s)}{c_0}$ and 
\begin{equation*}
C_{\frac{h^2}{q_* \sigma^2}}(\kappa,T,\epsilon,s)
\propto \dfrac{h^2}{q_* \sigma^2} \dfrac{c^+ T}{c_0 \epsilon} \Big( c_1(\eta) \Big(\dfrac{h^2}{q_* \sigma^2}\dfrac{C(\eta,s)}{c_0}\Big)^{-\frac{1}{2}} +c_2(\eta)\Big(\dfrac{h^2}{q_* \sigma^2}\dfrac{C(\eta,s)}{c_0}\Big)^{-\frac{6}{\eta}+\frac{1}{2}} \Big)\;.
\end{equation*}

\end{proof}

\paragraph{Remark.} The dependence of $C_{\frac{h^2}{q_* \sigma^2}}(\kappa,\tau,\epsilon,s)$ on $\tau$ and $\epsilon$ is well known. In fact, $C_{\frac{h^2}{q_* \sigma^2}}(\kappa,\tau,\epsilon,s)\sim\dfrac{\tau}{\epsilon}$ is a property that we use to obtain moment estimates further in the paper. Such proportionality is inherited from $\eqref{better}$. Additionally, for $q_* \sigma^2\ll h^2 $, the relation $C_{\frac{h^2}{q_* \sigma^2}}(\kappa,\tau,\epsilon,s)\sim \Big(\dfrac{h^2}{q_* \sigma^2}\Big)^{\frac{1}{2}}$ holds. Furthermore, the constants $C_{\frac{h^2}{q_* \sigma^2}}(\kappa,\tau,\epsilon,s)$ and $\kappa$ depend on the choice of $g$ and $\alpha$ from construction and the definition of $c_0$ in Lemma \ref{estlm1}.
\medskip

We study now the error estimate given by $\psi_*$, solution of
\begin{equation}\label{nonasystlin1}\begin{cases}
\txtd \psi_*(x,t)=\dfrac{1}{\epsilon}(\Delta \psi_*(x,t) + \alpha(t) \psi_*(x,t) - g(x) \psi_*(x,t)-\bar{u}(x,t)^2\psi_*(x,t)) ~\txtd t +\dfrac{\sigma}{\sqrt\epsilon} ~\txtd W_t\\
\psi_*(x,0)=0\;\;\;\;,\;\;\;\;\forall x\in\mathcal{O}\\
\left.\psi_*(\cdot,t)\right|_{\partial\mathcal{O}}=0\;\;\;\;,\;\;\;\;\forall t\geq0\;\;,
\end{cases}\end{equation}
for the function $\bar{u}$ assumed in the construction of $\psi$ in \eqref{nonasystpsi}. In particular, $\bar{u}$ satisfies the hypothesis in Proposition \ref{propdet}. The proof of the next corollary is inspired by \cite[Proposition 3.7]{berglund2002pathwise} and makes use of the Young-type inequality proven in Lemma $\ref{young-lm}$.

\begin{cor}\label{nonacorlin}
For any $0<s<\dfrac{1}{2}$ and $0<\epsilon$, $q_*\sigma^2\ll h^2$, there exist $M,\delta_0>0$ such that for any $\delta\leq\delta_0$ and $\bar{u}$ solution of $\eqref{nonasystdet}$ for which $\lvert\lvert\bar{u}(\cdot,0)\rvert\rvert_A\leq\delta$, the constants $0<\kappa(s), C_{\frac{h^2}{q_* \sigma^2}} (\kappa,\tau,\epsilon,s)$ obtained in Theorem \ref{nonathmlin} satisfy
\begin{equation}\label{nonacorlin-ine}
\mathbb{P}\bigg(\sup_{0\leq t\leq T}\lvert\lvert \psi_*(\cdot,t) \rvert\rvert_{A^s} \geq M h \bigg)
\leq C_{\frac{h^2}{q_* \sigma^2}}(\kappa,\tau,\epsilon,s)\textnormal{e}^{-\kappa\frac{h^2}{q_* \sigma^2}}
\end{equation}
for the solution $\psi_*$ of $\eqref{nonasystlin1}$ and any $0 \leq T \leq \tau$.
\end{cor}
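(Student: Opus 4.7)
The plan is to perturb around the purely linear process $\psi_0$ of Theorem~\ref{nonathmlin}. Writing $\psi_*=\psi_0+R$, Duhamel's formula applied to \eqref{nonasystlin1} against the evolution family $\mathcal{U}(t,t_1)=\exp\!\bigl(\tfrac1\epsilon\bigl[A(t-t_1)+\int_{t_1}^{t}\alpha(s)\,\txtd s\bigr]\bigr)$ generated by $A_{\alpha(\cdot)}/\epsilon$ yields the Volterra-type identity
\[
\psi_*(t)=\psi_0(t)-\frac{1}{\epsilon}\int_{0}^{t}\mathcal{U}(t,t_1)\bigl(\bar u(\cdot,t_1)^{2}\,\psi_*(\cdot,t_1)\bigr)\,\txtd t_1,
\]
so the whole task reduces to controlling the second term by a multiple of $\sup_{[0,T]}\lvert\lvert\psi_0\rvert\rvert_{A^s}$. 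Then the set inclusion $\{\sup_{[0,T]}\lvert\lvert\psi_*\rvert\rvert_{A^s}\geq Mh\}\subseteq\{\sup_{[0,T]}\lvert\lvert\psi_0\rvert\rvert_{A^s}\geq h\}$ combined with Theorem~\ref{nonathmlin} delivers \eqref{nonacorlin-ine} with the same constants $\kappa$ and $C_{h^{2}/(q_*\sigma^{2})}$.

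The key ingredient is a smoothing bound on $\mathcal{U}$ in the $A^s$-scale. The analyticity of the semigroup generated by $A$ (as in \eqref{est1}) together with the spectral gap $c_1:=\lambda_1-\max_{[0,\tau]}\alpha(t)>0$ (which holds by hypothesis on $\alpha$) gives, by interpolation,
\[
\lvert\lvert\mathcal{U}(t,t_1)\phi\rvert\rvert_{A^s}\leq C\Bigl(\tfrac{t-t_1}{\epsilon}\Bigr)^{-s/2}\textnormal{e}^{-c_1(t-t_1)/\epsilon}\,\lvert\lvert\phi\rvert\rvert_{\mathcal{H}}.
\]
Since $\mathcal{O}$ is one-dimensional, $\mathcal{V}\hookrightarrow L^{\infty}$, and Proposition~\ref{propdet} supplies $\lvert\lvert\bar u(\cdot,t)\rvert\rvert_{\infty}\leq c\lvert\lvert\bar u(\cdot,t)\rvert\rvert_{A}\leq c\delta$ on $[0,\tau]$; together with $g\geq 1$, which gives $\lvert\lvert\cdot\rvert\rvert_\mathcal{H}\leq\lvert\lvert\cdot\rvert\rvert_{A^s}$, this yields
\[
\lvert\lvert\bar u(\cdot,t_1)^{2}\,\psi_*(\cdot,t_1)\rvert\rvert_{\mathcal{H}}\leq c^{2}\delta^{2}\lvert\lvert\psi_*(\cdot,t_1)\rvert\rvert_{A^s}.
\]
(Equivalently, one could route this product estimate through Lemma~\ref{young-lm} with $q>0$ small and $r=1$.) Performing the change of variable $\tau=c_1(t-t_1)/\epsilon$ one computes that the prefactors conspire so that
\[
\frac{1}{\epsilon}\int_{0}^{t}\Bigl(\tfrac{t-t_1}{\epsilon}\Bigr)^{-s/2}\textnormal{e}^{-c_1(t-t_1)/\epsilon}\,\txtd t_1\leq c_1^{s/2-1}\,\Gamma(1-s/2),
\]
a bound uniform in $\epsilon$. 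Plugging these estimates into the Volterra identity produces
\[
\lvert\lvert\psi_*(t)\rvert\rvert_{A^s}\leq\lvert\lvert\psi_0(t)\rvert\rvert_{A^s}+\bar C\,\delta^{2}\sup_{0\leq t_1\leq t}\lvert\lvert\psi_*(t_1)\rvert\rvert_{A^s}.
\]

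Choosing $\delta_0$ so small that $\bar C\delta_0^{2}\leq \tfrac12$ and invoking the continuity of $\psi_*$ in $A^s$ (via a standard stopping-time/bootstrap argument on $\tau^*:=\inf\{t:\lvert\lvert\psi_*\rvert\rvert_{A^s}\geq Mh\}$ to ensure the supremum is finite before absorbing), one may take $\sup_{[0,T]}$ on both sides and absorb the last term on the left, giving $\sup_{[0,T]}\lvert\lvert\psi_*\rvert\rvert_{A^s}\leq 2\sup_{[0,T]}\lvert\lvert\psi_0\rvert\rvert_{A^s}$. Taking $M=2$ closes the proof via Theorem~\ref{nonathmlin}. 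The delicate point of the whole argument is the $\epsilon$-scaling: the smoothing factor $((t-t_1)/\epsilon)^{-s/2}$ together with the $1/\epsilon$ prefactor of Duhamel would individually blow up like $\epsilon^{-1+s/2}$, so everything hinges on trading this divergence against the exponential decay $\textnormal{e}^{-c_1(t-t_1)/\epsilon}$ provided by the spectral gap to obtain a constant that is uniform in $\epsilon$.
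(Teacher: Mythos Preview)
Your proposal is correct and follows the same overall strategy as the paper: write $\psi_*$ via Duhamel against the evolution of $A_{\alpha(\cdot)}/\epsilon$, bound the perturbative integral by a small multiple of $\sup\lvert\lvert\psi_*\rvert\rvert_{A^s}$ using $\lvert\lvert\bar u\rvert\rvert_A\leq\delta$, absorb for $\delta\leq\delta_0$ via a stopping-time argument, and reduce to Theorem~\ref{nonathmlin}.

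The technical route differs slightly. The paper stays in the $A^s$-norm throughout: it uses the min-max principle to get the pure contraction $\lVert\mathcal{U}(t,t_1)\phi\rVert_{A^s}\leq \textnormal{e}^{-\eta(t-t_1)/\epsilon}\lVert\phi\rVert_{A^s}$ (no smoothing factor), and then invokes the fractional product inequality of Lemma~\ref{young-lm} together with the Banach-algebra property of $\mathcal{V}$ to bound $\lVert\bar u^2\psi_*\rVert_{A^s}\leq c\lVert\bar u\rVert_A^2\lVert\psi_*\rVert_{A^s}$. You instead drop to $\mathcal{H}$ for the product, using only $\mathcal{V}\hookrightarrow L^\infty$, and recover the $A^s$-norm by analytic smoothing $\lVert\mathcal{U}(t,t_1)\phi\rVert_{A^s}\lesssim ((t-t_1)/\epsilon)^{-s/2}\textnormal{e}^{-c_1(t-t_1)/\epsilon}\lVert\phi\rVert_\mathcal{H}$, then check that the resulting Gamma-integral is uniform in $\epsilon$. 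Your route is arguably more elementary (it avoids Lemma~\ref{young-lm} entirely), while the paper's route avoids the singular kernel at $t_1=t$ and makes the $\epsilon$-bookkeeping trivial since no $((t-t_1)/\epsilon)^{-s/2}$ appears. Both yield the same conclusion with $M$ of order~$1$.
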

\begin{proof}
Define, for $M>0$,
\begin{equation*}
\tilde{\tau}=\inf \{0\leq t\;\;\;\text{s.t.}\;\;\lvert\lvert \psi_*(\cdot,t)\rvert\rvert_{A^s}\geq M h\}\in[0,\tau]\cup\infty
\end{equation*}
and the event
\begin{equation*}
\tilde\Omega:=\bigg\{ \sup_{0\leq t\leq T}\lvert\lvert \psi_0(\cdot,t) \rvert\rvert_{A^s}\leq h\bigg \}\cap \{\tilde{\tau}<\infty\}\;\;.
\end{equation*}
Denote then the norm $\lvert\lvert\cdot\rvert\rvert_{A^s}$ norm as
\begin{equation*}
\lvert\lvert \phi \rvert\rvert_{A^s}^2=\langle (-A)^s \phi,\phi\rangle\;,
\end{equation*}
for $\phi\in \mathcal{V}^s$, the fractional Sobolev space of order $2$ and degree $0<s<\dfrac{1}{2}$ with Dirichlet conditions.\\
Taking $\psi_0$ the solution of $\eqref{nonasystlin}$, we can state from Duhamel's formula that
\begin{equation*}\begin{split}
&\psi_*(x,t)=\int_0^t \textnormal{e}^{\frac{1}{\epsilon}\big(A(t-t_1)+\int_{t_1}^t\alpha(t_2)\txtd t_2\big)} \txtd W_{t_1}-
\int_0^t \textnormal{e}^{\frac{1}{\epsilon}\big(A(t-t_1)+\int_{t_1}^t\alpha(t_2)\txtd t_2\big)} \bar{u}(x,t_1)^2 \psi_*(x,t_1) \txtd t_1\\
&=\psi_0(x,t)-
\int_0^t \textnormal{e}^{\frac{1}{\epsilon}\big(A(t-t_1)+\int_{t_1}^t\alpha(t_2)\txtd t_2\big)} \bar{u}(x,t_1)^2 \psi_*(x,t_1) \txtd t_1\;\;,
\end{split}\end{equation*}
for all $0 \leq t \leq \tau$ . Therefore, for $t<\tilde{\tau}$ and for $\omega\in \tilde\Omega$,
\begin{equation} \label{steps}\begin{split}
&\lvert\lvert \psi_*(\cdot,t) \rvert\rvert_{A^s}^2
=\langle (-A)^s\psi_*(\cdot,t),\psi_*(\cdot,t)\rangle\\
&\leq\langle (-A)^s\psi_0(\cdot,t),\psi_0(\cdot,t)\rangle +2\bigg\lvert\Big\langle (-A)^s\int_0^t \textnormal{e}^{\frac{1}{\epsilon}\big(A(t-t_1)+\int_{t_1}^t\alpha(t_2)\txtd t_2\big)} \bar{u}(\cdot,t_1)^2 \psi_*(\cdot,t_1) \txtd t_1,\psi_0(\cdot,t)\Big\rangle \bigg\rvert\\
&+\bigg\lvert\Big\langle (-A)^s \int_0^t \textnormal{e}^{\frac{1}{\epsilon}\big(A(t-t_1)+\int_{t_1}^t\alpha(t_2)\txtd t_2\big)} \bar{u}(\cdot,t_1)^2 \psi_*(\cdot,t_1) \txtd t_1, \int_0^t \textnormal{e}^{\frac{1}{\epsilon}\big(A(t-t'_1)+\int_{t'_1}^t\alpha(t'_2)\txtd t'_2\big)} \bar{u}(\cdot,t'_1)^2 \psi_*(\cdot,t'_1) \txtd t'_1\Big\rangle \bigg\rvert\\
&\leq \lvert\lvert \psi_0(\cdot,t) \rvert\rvert_{A^s}^2+2 \int_0^t \lvert\lvert \textnormal{e}^{\frac{1}{\epsilon}\big(A(t-t_1)+\int_{t_1}^t\alpha(t_2)\txtd t_2\big)} \bar{u}(\cdot,t_1)^2 \psi_*(\cdot,t_1)\rvert\rvert_{A^s} \lvert\lvert \psi_0(\cdot,t) \rvert\rvert_{A^s} \txtd t_1\\
&+\int_0^t \int_0^t \lvert\lvert \textnormal{e}^{\frac{1}{\epsilon}\big(A(t-t_1)+\int_{t_1}^t\alpha(t_2)\txtd t_2\big)} \bar{u}(\cdot,t_1)^2 \psi_*(\cdot,t_1)\rvert\rvert_{A^s}\; \lvert\lvert \textnormal{e}^{\frac{1}{\epsilon}\big(A(t-t'_1)+\int_{t'_1}^t\alpha(t'_2)\txtd t'_2\big)} \bar{u}(\cdot,t'_1)^2 \psi_*(\cdot,t'_1)\rvert\rvert_{A^s} \txtd t_1\; \txtd t'_1\\
&\leq \lvert\lvert \psi_0(\cdot,t) \rvert\rvert_{A^s}^2 + 2 \int_0^t \textnormal{e}^{\frac{1}{\epsilon}\Big(-\lambda_1+\underset{0\leq t_2\leq t}{\max}\alpha(t_2)\Big)(t-t_1)} \lvert\lvert \bar{u}(\cdot,t_1)^2 \psi_*(\cdot,t_1) \rvert\rvert_{A^s} \lvert\lvert \psi_0(\cdot,t) \rvert\rvert_{A^s} \txtd t_1\\
&+\int_0^t \int_0^t \textnormal{e}^{\frac{1}{\epsilon}\Big(-\lambda_1+\underset{0\leq t_2\leq t}{\max}\alpha(t_2)\Big)(t-t_1)} \lvert\lvert \bar{u}(\cdot,t_1)^2 \psi_*(\cdot,t_1)\rvert\rvert_{A^s} \textnormal{e}^{\frac{1}{\epsilon}\Big(-\lambda_1+\underset{0\leq t_2'\leq t}{\max}\alpha(t_2')\Big)(t-t_1')} \lvert\lvert \bar{u}(\cdot,t'_1)^2 \psi_*(\cdot,t'_1)\rvert\rvert_{A^s} \txtd t_1\; \txtd t'_1
\end{split}\end{equation}
for which we used Cauchy-Schwarz inequality and the min-max principle on $\lvert\lvert\cdot\rvert\rvert_{A^s}$ .
From the Young-type inequality in Lemma $\ref{young-lm}$ and the control over the deterministic solution in Propostion $\ref{propdet}$ we obtain
\begin{equation} \label{young-in-lin}
\begin{split}
&\lvert\lvert \bar{u}(\cdot,t_1)^2 \psi_*(\cdot,t_1)\rvert\rvert_{A^s}\leq 
c(r,s)\lvert\lvert \bar{u}(\cdot,t_1)^2\rvert\rvert_{A} \lvert\lvert \psi_*(\cdot,t_1)\rvert\rvert_{A^s}\\
&\leq c(r,s)\lvert\lvert \bar{u}(\cdot,t_1)\rvert\rvert_{A}^2 \lvert\lvert \psi_*(\cdot,t_1)\rvert\rvert_{A^s}
\leq M c(r,s)\delta^2 h\;,
\end{split}
\end{equation}
for any $\lvert\lvert \bar{u}(\cdot,0) \rvert\rvert_A\leq \delta$.
Labeling $\eta=\lambda_1-\underset{0\leq t_1 \leq T}{\max} \alpha(t_1)>0$, we imply from $\eqref{steps}$ and $\eqref{young-in-lin}$,
\begin{equation}\label{last-step}\begin{split}
\lvert\lvert \psi_*(\cdot,t) \rvert\rvert_{A^s}^2 
&\leq h^2 
+ 2 M c(r,s)\delta^2 h^2 \int_0^t \textnormal{e}^{-\frac{\eta}{\epsilon}(t-t_1)} \txtd t_1
+M^2 c(r,s)^2 \delta^4 h^2 \int_0^t \int_0^t \textnormal{e}^{-\frac{\eta}{\epsilon}(t-t_1)} \textnormal{e}^{-\frac{\eta}{\epsilon}(t-t_1')}  \txtd t_1\; \txtd t'_1\\
& \leq h^2 
+ 2 M \dfrac{\epsilon c(r,s)}{\eta} \delta^2 h^2
+ M^2 \dfrac{\epsilon^2 c(r,s)^2}{\eta^2} \delta^4 h^2
=\Big(1+M\dfrac{\epsilon c(r,s)}{\eta}\delta^2\Big)^2 h^2 \;.
\end{split}\end{equation}
For any $0<s<\dfrac{1}{2}$, we can always find a pair of parameters $\delta_0,M>0$ such that $M^2>\Big(1+M\dfrac{\epsilon c(r,s)}{\eta}\delta^2\Big)^2$ for any $\delta\leq\delta_0$. From the definition of $\tilde{\tau}$ it holds $\lvert\lvert \psi_*(\cdot,\tilde{\tau}) \rvert\rvert_{A^s}^2=M^2 h^2$ which is in contradiction with $\eqref{last-step}$, hence we infer that $\mathbb{P}(\tilde\Omega)=0$ and
\begin{equation*}
\mathbb{P}\bigg(\sup_{0\leq t\leq T}\lvert\lvert \psi_*(\cdot,t) \rvert\rvert_{A^s} \geq M h \bigg) 
\leq \mathbb{P}\bigg(\sup_{0\leq t\leq T}\lvert\lvert \psi_0(\cdot,t) \rvert\rvert_{A^s}\geq h \bigg)\;.
\end{equation*}
The inequality \eqref{nonacorlin-ine} is proven from Theorem $\ref{nonathmlin}$.
\end{proof}
\paragraph{Notation.}
For notation, we omit $M$ from $\eqref{nonacorlin-ine}$ since it can be absorbed in the construction of $C_{\frac{h^2}{q_* \sigma^2}}(\kappa,\tau,\epsilon,s)$ and $\kappa$. We note that the constants $M$ and $\delta_0$ are dependent on $g$ and $\alpha$.
\paragraph{Remark.}
The exponential decay of $\lvert\lvert\bar{u}(\cdot,t)\rvert\rvert_A$ proven in Proposition $\ref{propdet}$ can be used in $\eqref{young-in-lin}$ to achieve more freedom on the choice of $\delta$.

\subsection{Nonlinear case}
In order to study the estimate error for $\eqref{nonasystpsi}$ the following lemmas are required.

\begin{lm}
Set $0 \leq t \leq \tau$. For $G$ defined as in $\eqref{defb}$ and a function $\psi(\cdot,t)\in V^s$ with $\dfrac{1}{3}<s<\dfrac{1}{2}$, the inequality
\begin{equation}\label{tough}
\lvert\lvert G(\psi(\cdot,t))\rvert\rvert_{A^r}\leq c(r,s) \max\{\lvert\lvert\psi(\cdot,t)\rvert\rvert_{A^s}^2,\lvert\lvert\psi(\cdot,t)\rvert\rvert_{A^s}^{3}\}
\end{equation}
holds when $0<r<\dfrac{1}{2}-3\Big(\dfrac{1}{2}-s\Big)$ and for a certain $c(r,s)<\infty$.
\end{lm}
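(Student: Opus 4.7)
The plan is to split $G(\psi) = -3\bar{u}\psi^2 - 3\eta\,\psi^3$ and estimate each term separately by iterating the Young-type inequality of Lemma \ref{young-lm}, using the bound $\|\bar{u}(\cdot,t)\|_{A}\leq\delta$ from Proposition \ref{propdet} for the mixed term.

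The key is to choose an intermediate exponent $q$ with
\begin{equation*}
\max\bigl\{0,\;r-\tfrac{1}{2}\bigr\} \;<\; q \;<\; 2s-\tfrac{1}{2}.
\end{equation*}
Such a $q$ exists: since $s>\tfrac{1}{3}$ we have $2s-\tfrac{1}{2}>\tfrac{1}{6}>0$, and the hypothesis $r<\tfrac{1}{2}-3(\tfrac{1}{2}-s)=3s-1$ gives $r-\tfrac{1}{2}<3s-\tfrac{3}{2}<2s-\tfrac{1}{2}$. A first application of Lemma \ref{young-lm} with exponents $(s,s,q)$, which is legitimate because $q+\tfrac{1}{2}<2s$, yields
\begin{equation*}
\|\psi^2\|_{A^q} \;\leq\; c(s,s,q)\,\|\psi\|_{A^s}^2.
\end{equation*}
A second application with exponents $(s,q,r)$, valid because $r+\tfrac{1}{2}<s+q$, then gives
\begin{equation*}
\|\psi^3\|_{A^r} \;=\; \|\psi\cdot\psi^2\|_{A^r} \;\leq\; c(s,q,r)\,\|\psi\|_{A^s}\,\|\psi^2\|_{A^q} \;\leq\; c'(r,s)\,\|\psi\|_{A^s}^3.
\end{equation*}

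For the mixed term, I would apply Lemma \ref{young-lm} with exponents $(1,q,r)$, which requires $r+\tfrac{1}{2}<1+q$; this is guaranteed by $q>r-\tfrac{1}{2}$. Combined with the previous estimate for $\|\psi^2\|_{A^q}$ and the deterministic bound $\|\bar{u}(\cdot,t)\|_A\leq\delta$, this gives
\begin{equation*}
\|\bar{u}(\cdot,t)\,\psi^2\|_{A^r} \;\leq\; c(1,q,r)\,\|\bar{u}(\cdot,t)\|_A\,\|\psi^2\|_{A^q} \;\leq\; c''(r,s)\,\delta\,\|\psi\|_{A^s}^2.
\end{equation*}
Summing the two contributions and absorbing $\delta$ and $\eta\in(0,1)$ into the constant produces
\begin{equation*}
\|G(\psi)\|_{A^r} \;\leq\; 3\,\|\bar{u}\psi^2\|_{A^r}+3\eta\,\|\psi^3\|_{A^r} \;\leq\; c(r,s)\bigl(\|\psi\|_{A^s}^2+\|\psi\|_{A^s}^3\bigr),
\end{equation*}
and the right-hand side is bounded by $2\,c(r,s)\max\{\|\psi\|_{A^s}^2,\|\psi\|_{A^s}^3\}$, which (after relabeling $c(r,s)$) yields \eqref{tough}.

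The only real subtlety is checking that the admissible window for the intermediate exponent $q$ is nonempty; this is precisely what forces the threshold $r<3s-1$ and $s>\tfrac{1}{3}$ in the hypotheses. Everything else is a bookkeeping exercise combining the two applications of the algebra-type inequality with the a priori control on $\bar{u}$.
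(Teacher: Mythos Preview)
Your approach is the same as the paper's---split $G$ into the mixed and cubic pieces, iterate Lemma~\ref{young-lm}, and use Proposition~\ref{propdet} for $\bar u$---but there is a bookkeeping slip in your choice of the intermediate exponent $q$. You require $q>\max\{0,r-\tfrac12\}$, which in fact equals $0$ since $r<3s-1<\tfrac12$; this is enough for the mixed term (where the constraint is $r+\tfrac12<1+q$), but it does \emph{not} guarantee the condition $r+\tfrac12<s+q$ that you invoke for the cubic term. For example, with $s=0.4$ and $r=0.1$ your window is $q\in(0,0.3)$, yet $r+\tfrac12-s=0.2$, so any $q\in(0,0.2)$ makes the second application of Lemma~\ref{young-lm} illegitimate.

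The fix is simply to tighten the lower bound to $q>r+\tfrac12-s$ (which is always positive here, since $s<\tfrac12$ and $r>0$). Then the window $\bigl(r+\tfrac12-s,\,2s-\tfrac12\bigr)$ is nonempty precisely when $r<3s-1$, which is exactly the hypothesis, and all three applications of Lemma~\ref{young-lm} go through. With this correction your proof is complete and matches the paper's.
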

\begin{proof}
The current proof relies on Lemma $\ref{young-lm}$. By definition of the function $G$, $\eqref{defb}$, and Theorem \ref{propdet},
\begin{equation*}\begin{split}
&\lvert\lvert G(\psi(\cdot,t))\rvert\rvert_{A^r}\leq 
3\lvert\lvert \bar{u}(\cdot,t) \psi(\cdot,t)^2\rvert\rvert_{A^r}+
3\lvert\lvert \psi(\cdot,t)^3\rvert\rvert_{A^r}\\
&\leq c(r,s) \Big(\lvert\lvert \bar{u}(\cdot,t) \rvert\rvert_{A^s}\;\lvert\lvert  \psi(\cdot,t)\rvert\rvert_{A^s}^2+ \lvert\lvert  \psi(\cdot,t)\rvert\rvert_{A^s}^3\Big)
\leq c(r,s) \Big(\lvert\lvert \bar{u}(\cdot,t) \rvert\rvert_{A}\;\lvert\lvert  \psi(\cdot,t)\rvert\rvert_{A^s}^2+ \lvert\lvert  \psi(\cdot,t)\rvert\rvert_{A^s}^3\Big)\\
&\leq c(r,s) \Big(\lvert\lvert \bar{u}(\cdot,0) \rvert\rvert_{A}\;\lvert\lvert  \psi(\cdot,t)\rvert\rvert_{A^s}^2+ \lvert\lvert  \psi(\cdot,t)\rvert\rvert_{A^s}^3\Big)
\leq c(r,s) \max\{\lvert\lvert\psi(\cdot,t)\rvert\rvert_{A^s}^2,\lvert\lvert\psi(\cdot,t)\rvert\rvert_{A^s}^{3}\}\;.
\end{split}\end{equation*}
In the last inequality, we have rewritten for simplicity $(1+\delta)c(r,s)$ as $c(r,s)$ for $\lvert\lvert \bar{u} (\cdot,0)\rvert\rvert_A\leq \delta$\;.
\end{proof}

The next lemma can be proven following the same steps as \cite[Lemma 3.5, Corollary 3.6]{berglund2022stochastic}.

\begin{lm}
Given $0<r<\dfrac{1}{2}$ so that $G(\psi(\cdot,t))\in V^r$ for all $0 \leq t \leq \tau$ and $\psi$ solution of $\eqref{nonasystpsi}$ and set $q<r+2$, then there exists $M(r,q)<\infty$ that satisfies, for all $0 \leq t \leq \tau$,
\begin{equation}\label{easier}
\begin{split}
&\lvert\lvert\int_0^t \textnormal{e}^{\frac{1}{\epsilon}\big((A+\alpha(t))(t-t_1)-\int_{t_1}^t\bar{u}(\cdot,t_2)^2\txtd t_2\big)}G(\psi(\cdot,t_1))\txtd t_1\rvert\rvert_{A^q}\\
&=\lvert\lvert \psi(\cdot,t)-\psi_*(\cdot,t) \rvert\rvert_{A^q}\leq M(r,q) \epsilon^{\frac{q-r}{2}-1}\sup_{0\leq t_1\leq t}\lvert\lvert G(\psi(\cdot,t))\rvert\rvert_{A^r}\;\;,
\end{split}
\end{equation}
with $\psi_*$ solution of $\eqref{nonasystlin1}$.
\end{lm}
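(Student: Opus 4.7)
Since $\psi$ satisfies the full nonlinear equation $\eqref{nonasystpsi}$ in the form $\txtd\psi = \frac{1}{\epsilon}[(A+\alpha(t)-3\bar u^2)\psi + G(\psi)]\,\txtd t + \frac{\sigma}{\sqrt\epsilon}\,\txtd W_t$ and $\psi_*$ satisfies its linearization $\eqref{nonasystlin1}$ with the same initial data and the same $Q$-Wiener forcing, subtracting the two equations eliminates the noise and shows that $\psi - \psi_*$ solves a deterministic inhomogeneous evolution equation whose source is $G(\psi)/\epsilon$. Applying Duhamel's principle with the two-parameter evolution family $U(t,t_1):=\textnormal{e}^{\frac{1}{\epsilon}((A+\alpha(t))(t-t_1) - \int_{t_1}^t \bar u(\cdot, t_2)^2\, \txtd t_2)}$ associated with the linearized homogeneous problem yields
$$\psi(\cdot,t) - \psi_*(\cdot,t) = \frac{1}{\epsilon}\int_0^t U(t,t_1)\,G(\psi(\cdot,t_1))\,\txtd t_1,$$
which recovers the identity in $\eqref{easier}$ and accounts for the $\epsilon^{-1}$ prefactor in the final bound.

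The next step is to establish the smoothing estimate
$$\|U(t,t_1)\|_{\mathcal{L}(\mathcal{V}^r, \mathcal{V}^q)} \leq C\bigg(\frac{t-t_1}{\epsilon}\bigg)^{-(q-r)/2}$$
for all $q > r \geq 0$, uniformly in $0 \leq t_1 < t \leq \tau$. For the autonomous reference semigroup $\textnormal{e}^{As/\epsilon}$ this is immediate from the spectral calculus for the self-adjoint negative operator $A$: the pointwise bound $\lambda_k^{(q-r)/2}\textnormal{e}^{-\lambda_k s/\epsilon} \leq C(\epsilon/s)^{(q-r)/2}$, together with the characterization $\|\phi\|_{A^q}^2 = \sum_k \lambda_k^q \langle \phi, e_k\rangle^2$, gives $\|\textnormal{e}^{As/\epsilon}\phi\|_{A^q} \leq C(s/\epsilon)^{-(q-r)/2}\|\phi\|_{A^r}$. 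The additional time-dependent perturbation $\alpha(t) - \bar u(\cdot,t)^2$ is bounded uniformly in $t \in [0,\tau]$ because $\alpha$ is continuous and Proposition $\ref{propdet}$ together with the one-dimensional Sobolev embedding $\mathcal{V} \hookrightarrow L^\infty(\mathcal{O})$ yield $\|\bar u(\cdot,t)\|_\infty \leq C\delta$; hence the perturbed evolution $U(t,t_1)$ inherits the same smoothing estimate up to a multiplicative factor uniformly bounded on $[0,\tau]$.

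Combining Duhamel's identity with the smoothing bound and taking $A^q$-norms inside the integral gives
$$\|\psi(\cdot,t) - \psi_*(\cdot,t)\|_{A^q} \leq \frac{C}{\epsilon}\,\epsilon^{(q-r)/2}\sup_{0 \leq t_1 \leq t}\|G(\psi(\cdot,t_1))\|_{A^r}\int_0^t (t-t_1)^{-(q-r)/2}\,\txtd t_1.$$
The assumption $q < r + 2$ is exactly what ensures $(q-r)/2 < 1$, so the time integral converges and is bounded by $C \tau^{1-(q-r)/2}$. Absorbing the numerical constants into $M(r,q)$ yields the desired estimate $M(r,q)\,\epsilon^{(q-r)/2 - 1}\sup\|G(\psi)\|_{A^r}$.

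The main technical obstacle is making rigorous the smoothing estimate for the non-autonomous propagator $U(t,t_1)$, since the formal exponential notation in $\eqref{easier}$ involves a time-dependent operator $\alpha(t) - \bar u(\cdot,t)^2$ that does not commute with $A$. The cleanest route is via Tanabe-Kato theory for parabolic evolution families with H\"older-continuous-in-time generators, where the required time regularity of $\bar u$ follows from the continuity in $\mathcal{V}$ established in Appendix B. Alternatively, following \cite[Lemma 3.5, Corollary 3.6]{berglund2022stochastic}, one may bypass the general non-autonomous theory by iterating the Duhamel formula that relates $U(t,t_1)$ to the autonomous semigroup $\textnormal{e}^{A(t-t_1)/\epsilon}$ and absorbing the bounded perturbation via a Gronwall argument uniform in $[0,\tau]$.
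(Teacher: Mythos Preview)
Your proposal is correct and matches the paper's intended approach: the paper does not give its own proof but simply refers to \cite[Lemma 3.5, Corollary 3.6]{berglund2022stochastic}, which is precisely the Duhamel-plus-smoothing argument you outline. Your identification of the key technical point---the $\mathcal{V}^r\to\mathcal{V}^q$ smoothing estimate for the non-autonomous evolution family, handled either via Tanabe--Kato theory or by iterating against the autonomous semigroup $\textnormal{e}^{A(t-t_1)/\epsilon}$ with a bounded perturbation---is exactly what the cited reference does, and your accounting of the $\epsilon$-powers (the $\epsilon^{-1}$ from Duhamel combined with $\epsilon^{(q-r)/2}$ from rescaling the smoothing bound, plus the integrability condition $(q-r)/2<1$) is correct.
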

We can then prove Theorem $\ref{error estimate}$ according to a similar method to the one used in the proof of \cite[Theorem 2.4]{berglund2022stochastic}.

\begin{proof}[Proof Theorem $\ref{error estimate}$.]
Assume $h_1,h_2>0$ such that $h=h_1+h_2$ and set $\dfrac{1}{3}<s<\dfrac{1}{2}$. Then
\begin{equation*} \begin{split}
& \mathbb{P}\Big(\tau_{\mathcal{B}(h)}<T\Big)= \mathbb{P}\Big(\sup_{0\leq t \leq \min\{T,  \tau_{\mathcal{B}(h)}\}} \lvert\lvert \psi(\cdot,t) \rvert\rvert_{A^s}\geq h\Big)\\
&\leq \mathbb{P}\Big(\sup_{0\leq t \leq T} \lvert\lvert \psi_*(\cdot,t) \rvert\rvert_{A^s}+\sup_{0\leq t \leq \min\{T,  \tau_{\mathcal{B}(h)}\}} \lvert\lvert \psi(\cdot,t)-\psi_*(\cdot,t) \rvert\rvert_{A^s}\geq h\Big)\\
& \leq \mathbb{P}\Big(\sup_{0\leq t \leq T} \lvert\lvert \psi_*(\cdot,t) \rvert\rvert_{A^s}\geq h_1\Big)+ 
\mathbb{P}\Big(
\sup_{0\leq t \leq \min\{T,  \tau_{\mathcal{B}(h)}\}} \lvert\lvert \psi(\cdot,t)-\psi_*(\cdot,t) \rvert\rvert_{A^s}>h_2\Big)\;\;.
\end{split} \end{equation*}
For $t<\tau_{\mathcal{B}(h)}$ and since $h=O(1)$, the inequalities $\eqref{tough}$ and $\eqref{easier}$ imply
\begin{equation*}\begin{split}
&\lvert\lvert \psi(\cdot,t)-\psi_*(\cdot,t) \rvert\rvert_{A^q}\leq M(r,q) \epsilon^{\frac{q-r}{2}-1}\sup_{0\leq t_1\leq t}\lvert\lvert G(\psi(\cdot,t))\rvert\rvert_{A^r}\\
&\leq M(r,q) \epsilon^{\frac{q-r}{2}-1} c(r,s) \lvert\lvert\psi(\cdot,t)\rvert\rvert_{A^s}^2
\leq M(r,q) \epsilon^{\frac{q-r}{2}-1} c(r,s) h^2\;,
\end{split}\end{equation*}
for $0<r<\dfrac{1}{2}-3\Big(\dfrac{1}{2}-s\Big)$ and $0<q<r+2$. Assuming $q\geq s$ and setting $h_2=M(r,q) \epsilon^{\frac{q-r}{2}-1} c(r,s) h^2$ we obtain
\begin{equation*}
\mathbb{P}\Big(\sup_{0\leq t \leq \min\{T,  \tau_{\mathcal{B}(h)}\}} \lvert\lvert \psi(\cdot,t)-\psi_*(\cdot,t) \rvert\rvert_{A^s}>h_2\Big)
\leq \mathbb{P}\Big(\sup_{0\leq t \leq \min\{T,  \tau_{\mathcal{B}(h)}\}} \lvert\lvert \psi(\cdot,t)-\psi_*(\cdot,t) \rvert\rvert_{A^q}>h_2\Big)=0\;.
\end{equation*}
We can then estimate $\mathbb{P}\Big(\tau_{\mathcal{B}(h)}<T\Big)\leq \mathbb{P}\Big(\underset{0\leq t\leq T}{\sup} \lvert\lvert \psi_*(\cdot,t) \rvert\rvert_{A^s}\geq h_1\Big)$ with Corollary $\ref{nonacorlin}$ for $h_1=h-h_2=h\Big(1-O\Big(\dfrac{h}{\epsilon^\nu}\Big)\Big)$ and $\nu=1-\dfrac{q-r}{2}$. The choice of $s\leq q <r+2$ makes so that the inequality \eqref{easier} holds for any $\nu>0$.\\
When $0<s<\dfrac{1}{3}$, the inclusion $V^{s_1}\subset V^s$ if $s<s_1$ implies that
\begin{equation}\label{last}
\mathbb{P}\Big(\tau_{\mathcal{B}(h)}<T\Big)= \mathbb{P}\Big(\sup_{0\leq t\leq \min\{T,  \tau_{\mathcal{B}(h)}\}} \lvert\lvert \psi(\cdot,t) \rvert\rvert_{A^s}\geq h\Big)\leq \mathbb{P}\Big(\sup_{0\leq t\leq \min\{T,  \tau_{\mathcal{B}(h)}\}} \lvert\lvert \psi(\cdot,t) \rvert\rvert_{A^{s_1}}\geq h\Big)\;,
\end{equation}
for which the last term can be controlled as before by choosing $\dfrac{1}{3}<s_1<\dfrac{1}{2}$ . The inequality $\eqref{last}$ holds since we assume $g\geq1$ and therefore $\lambda_1>1$.
\end{proof}

\subsection{Moment estimates}
The inequality presented in Theorem \ref{error estimate} enables an estimation of the moments of $\tau_{\mathcal{B}(h)}$. The proof of the next corollary relies on the step described in \cite[Proposition 3.1.12]{berglund2006noise}.
\begin{cor}
For any $k\in\mathbb{N}\setminus\{0\}$, $0<s<\dfrac{1}{2}$, $\sigma \sqrt{q_*}\ll h$ and equivalent assumptions to Theorem \ref{error estimate}, the following estimate holds:
\begin{equation}\label{moment1}
\mathbb{E}\Big(\tau_{\mathcal{B}(h)}^k\Big)\geq 
\dfrac{1}{(k+1)}\Bigg(\dfrac{\epsilon}{c} \Big(\dfrac{h^2}{q_*\sigma^2}\Big)^{-\frac{1}{2}} \textnormal{exp}\Bigg\{\tilde\kappa\dfrac{h^2}{q_* \sigma^2}\Bigg\} \Bigg)^k
\end{equation}
for a constant $c=c_{\kappa,s}>0$ and $\tilde{\kappa}=\kappa\bigg(1-O\Big(\dfrac{h}{\epsilon^\nu}\Big)\bigg)$, given $h,\kappa,\nu>0$ from Theorem \ref{error estimate}.
\end{cor}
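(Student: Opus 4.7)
The plan is to apply the standard layer-cake identity
\begin{equation*}
\mathbb{E}\big(\tau_{\mathcal{B}(h)}^k\big) = \int_0^\infty k\, t^{k-1}\,\mathbb{P}\big(\tau_{\mathcal{B}(h)} > t\big)\,\txtd t
\end{equation*}
and bound the right-hand side from below by restricting the integration to a carefully chosen interval $[0,T_*]$ on which the exit-probability estimate from Theorem \ref{error estimate} is strong enough to give a non-trivial complementary probability.

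First I would make explicit the remark following Theorem \ref{nonathmlin} and combine it with Theorem \ref{error estimate}: absorbing every factor independent of $h,\sigma,\epsilon,T$ into a single constant $c=c_{\kappa,s}>0$, one obtains
\begin{equation*}
\mathbb{P}\big(\tau_{\mathcal{B}(h)} < T\big)\leq c\,\dfrac{T}{\epsilon}\Big(\dfrac{h^2}{q_*\sigma^2}\Big)^{\!1/2}\exp\bigg\{-\tilde\kappa\,\dfrac{h^2}{q_*\sigma^2}\bigg\},
\end{equation*}
valid for every $T\in[0,\tau]$, with $\tilde\kappa = \kappa\big(1-O(h/\epsilon^\nu)\big)$. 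Setting this right-hand side equal to $1$ singles out the natural cut-off
\begin{equation*}
T_* := \dfrac{\epsilon}{c}\Big(\dfrac{h^2}{q_*\sigma^2}\Big)^{\!-1/2}\exp\bigg\{\tilde\kappa\,\dfrac{h^2}{q_*\sigma^2}\bigg\},
\end{equation*}
so that for every $0\le t\le T_*$ the lower bound $\mathbb{P}\big(\tau_{\mathcal{B}(h)}\geq t\big)\geq 1-t/T_*$ holds.

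Inserting this estimate into the layer-cake formula and truncating the integration to $[0,T_*]$ yields
\begin{equation*}
\mathbb{E}\big(\tau_{\mathcal{B}(h)}^k\big) \geq \int_0^{T_*} k\,t^{k-1}\Big(1-\dfrac{t}{T_*}\Big)\,\txtd t = T_*^k-\dfrac{k}{k+1}T_*^k = \dfrac{T_*^k}{k+1},
\end{equation*}
which is exactly the claimed bound \eqref{moment1}; the prefactor $1/(k+1)$ comes out of the elementary integral $\int_0^1(1-x)x^{k-1}\,\txtd x$ after rescaling. This mirrors the construction in \cite[Proposition 3.1.12]{berglund2006noise}.

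The main obstacle I expect is the admissibility of $T_*$: since $T_*$ grows exponentially in $h^2/(q_*\sigma^2)$ and the underlying exit-time estimate of Theorem \ref{error estimate} is only proven for $T\in[0,\tau]$, the above derivation is informative only in the regime where $T_*\leq\tau$. In the complementary regime one either has to restrict the parameter window (noting that the bound \eqref{moment1} would otherwise exceed the trivial ceiling $\tau^k$ coming from $\tau_{\mathcal{B}(h)}\leq\tau$ imposed by the definition of $\mathcal{B}(h)$), or to replace $T_*$ by $\min\{T_*,\tau\}$ and track the resulting constants. A secondary but purely bookkeeping issue is the consistent absorption into $c_{\kappa,s}$ of all factors (from Lemma \ref{estlm1}, from $C(\eta,s)$, and from the weak $h$-dependence of the prefactor in Theorem \ref{nonathmlin}) that depend neither on $h$ nor on $\sigma$ nor on $\epsilon$.
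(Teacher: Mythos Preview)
Your proposal is correct and follows essentially the same approach as the paper: layer-cake representation, truncation to a finite interval, insertion of the linear-in-$T$ probability bound $\mathbb{P}(\tau_{\mathcal{B}(h)}<T)\leq cT\epsilon^{-1}(h^2/q_*\sigma^2)^{1/2}e^{-\tilde\kappa h^2/q_*\sigma^2}$, and evaluation at $T_*=\epsilon c^{-1}(h^2/q_*\sigma^2)^{-1/2}e^{\tilde\kappa h^2/q_*\sigma^2}$. The only cosmetic difference is that the paper first integrates for generic $T$ to obtain $T^k-\frac{kc}{(k+1)\epsilon}T^{k+1}(h^2/q_*\sigma^2)^{1/2}e^{-\tilde\kappa h^2/q_*\sigma^2}$ and then optimises in $T$, whereas you identify $T_*$ upfront as the time at which the probability bound hits $1$; both routes land on the same $T_*$ and the same value $T_*^k/(k+1)$. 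Your observation about the constraint $T_*\leq\tau$ is a genuine caveat that the paper's proof (which writes ``for any $T>0$'') does not explicitly address either.
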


\begin{proof}
The $k^\text{th}-$moment can be estimated by
\begin{equation*}
\begin{split}
&\mathbb{E}\Big(\tau_{\mathcal{B}(h)}^k\Big)=\int_0^\infty k t^{k-1} \mathbb{P}\Big(\tau_{\mathcal{B}(h)}\geq t\Big)\txtd t
\geq\int_0^T k t^{k-1} \Big(1-\mathbb{P}\Big(\tau_{\mathcal{B}(h)}<t\Big)\Big)\txtd t\\
&\geq\int_0^T k t^{k-1} \Bigg(1-C_{\frac{h^2}{q_* \sigma^2}} (\kappa,t,\epsilon,s)\text{exp}\Bigg\{-\kappa\dfrac{h^2}{\sigma^2}\bigg(1-O\Big(\dfrac{h}{\epsilon^\nu}\Big)\bigg)\Bigg\}\Bigg)\txtd t
\end{split}
\end{equation*}
for any $T>0$. From the hypothesis we know that $C_{\frac{h^2}{q_* \sigma^2}}(\kappa,t,\epsilon,s)\sim\dfrac{t}{\epsilon}  \Big(\dfrac{h^2}{q_* \sigma^2}\Big)^{\frac{1}{2}}$ by construction, therefore there exists a $c>0$ such that $C_{\frac{h^2}{q_* \sigma^2}}(\kappa,t,\epsilon,s)<c\dfrac{t}{\epsilon}  \Big(\dfrac{h^2}{q_* \sigma^2}\Big)^{\frac{1}{2}}$  and
\begin{equation*}
\mathbb{E}\Big(\tau_{\mathcal{B}(h)}^k\Big)
\geq\int_0^T k t^{k-1} \Bigg(1-c\dfrac{t}{\epsilon}  \Big(\frac{h^2}{q_* \sigma^2}\Big)^{\frac{1}{2}} \text{exp}\Bigg\{-\tilde\kappa\dfrac{h^2}{q_* \sigma^2}\Bigg\}\Bigg)\txtd t
=T^k-\dfrac{kc}{(k+1)\epsilon}T^{k+1} \Big(\frac{h^2}{q_* \sigma^2}\Big)^{\frac{1}{2}} \text{exp}\Bigg\{-\tilde\kappa\dfrac{h^2}{q_* \sigma^2}\Bigg\}\;.
\end{equation*}
The last term on the right member can be optimized in $T$ at $T=\dfrac{\epsilon}{c} \Big( \dfrac{h^2}{q_*\sigma^2}\Big)^{-\frac{1}{2}} \text{exp}\Bigg\{\tilde\kappa\dfrac{h^2}{q_*\sigma^2}\Bigg\}$, from which the corollary is proven.
\end{proof}
The right-hand side in \eqref{moment1} depends on $g$ and $\alpha$ due to the presence of $c$ and $\tilde\kappa$. The nature of the dependence is not trivial but it is visible that it affects the bound also in the exponential function.

\section{Numerical simulations}

In order to cross-validate and visualize the results we have obtained, we use numerical simulations. We simulate \eqref{mainsyst} using a finite difference discretization and the semi-implicit Euler-Maruyama method (\cite[Chapter 10]{lord2014introduction}). In detail: \begin{itemize}
\item An integer $N\gg1$ has been chosen in order to study the interval $\mathcal{O}=[0,L]$ in $N+2$ points, each distant $h=\dfrac{L}{N+1}$ from the closest neighbouring point.
\item The time was approximated by studying an interval of length $T\geq 5000$ at the values $\{j\;dt\}_{j=0}^{nt}$ for $nt:=\dfrac{T}{dt}\in\mathbb{N}$ and $nt\gg0$. 
\item The Laplacian operator is simulated as, $\mathtt{\Delta}$, the tridiagonal $N\times N$ matrix with values $-\dfrac{2}{h^2}$ on main diagonal and $\dfrac{1}{h^2}$ on the first superdiagonal and subdiagonal. The operator $A_\alpha$ was then approximated with 
\begin{equation*}\mathtt{A}_\alpha:=\mathtt{\Delta}-\mathtt{g}+\alpha \mathtt{I},\end{equation*}
for $\mathtt{g}$ the diagonal $N\times N$ matrix with elements $\mathtt{g}_{n,n}=g(n\;h)$ for any $n\in\{1,...,N\}$ and $\mathtt{I}$ as the $N\times N$ identity matrix. 
\item The values assumed by the solution $u$ in $\mathring{\mathcal{O}}$ are approximated by the $N\times(nt+1)$ matrix $\mathtt{u}$. Set $j\in\{0,...,nt\}$, the $j^\text{th}$ column of $\mathtt{u}$ is labeled $\mathtt{u}_j$.
\item The noise is been studied through the following: \begin{itemize} 
\item the integer $0<M\ll N$, chosen in order to indicate the number of directions of interest in $\mathcal{H}$ on which the noise will be considered to have effect;
\item the $M\times N$ matrix $\mathtt{e}'$ with elements $\mathtt{e}'(m,n):=e'_m(n \; h)$;
\item the $M\times M$ orthonormal matrix $\mathtt{O}$ defined by $\mathtt{O}(j_1,j_2):=\langle b_{j_1},e_{j_2}'\rangle$;
\item the vertical vector $\mathtt{q}$ composed of the first $M$ eigenvalues of $Q$ ordered by index and $\mathtt{q}^{\frac{1}{2}}$, the element-wise squared root of $\mathtt{q}$.
\item the random vertical vectors $\mathtt{W}_j$ with $M$ elements generated by independent standard Gaussian distributions each $j^\textnormal{th}$ iteration.
\end{itemize}
\end{itemize}
Such constructions enable the approximation
\begin{equation}\label{app}
\mathtt{u}_{j+1}=(\mathtt{I}-\mathtt{A}_\alpha dt)^{-1}( \mathtt{u}_j-\mathtt{u}^3_j dt+ \sigma \mathtt{q}^{\frac{1}{2}} \mathtt{W}_j \mathtt{O} \mathtt{e}' \sqrt{dt})
\end{equation}
for any $j\in\{0,...,nt-1\}$. Figure \ref{fig:m=10_SPDE_cos(3x)+1} and Figure \ref{fig:m=10_SPDE_lin} show the resulting plots for $g(x)=cos(3x)+1$ and for $g(x)=\dfrac{x}{L}$ distinguishing the cases in which $\alpha$ is less or higher than $\lambda_1$.\footnote{$L=2\pi$, $N=200$, $T=10000$, $nt=100000$, $\sigma=0.05$, $M=D=10$.\\ For different values of $\alpha$, distinct random generated $\mathtt{O}$ and $\mathtt{q}$ are used. The maximum value in $\mathtt{q}$ is set to be $1$.\\
The initial value has been taken close to the null function because of its relevance in the theory.} It is visible in figures $(a)$ that for $\alpha<\lambda_1$ the solution remains close to the null function and assumes no persistent shape. Pictures $(b)$ display the change caused by the crossing of the bifurcation threshold. In particular, the fact that the solution jumps away from the null functions and remains close to an equilibrium. The perturbation generated by noise can then create jumps to other stable deterministic stationary solutions whose shape is defined by choice of $g$.

\begin{figure}[ht!]
    \centering
    %\subfloat[$\alpha=1.15<\lambda_1$]{\begin{overpic}[scale=0.3]{no_m=10_SPDE_cos(3x)+1_alpha=1,15_10000_100000_sigma=0,05}
    \subfloat[$\alpha=1.15<\lambda_1$]{\begin{overpic}[scale=0.3]{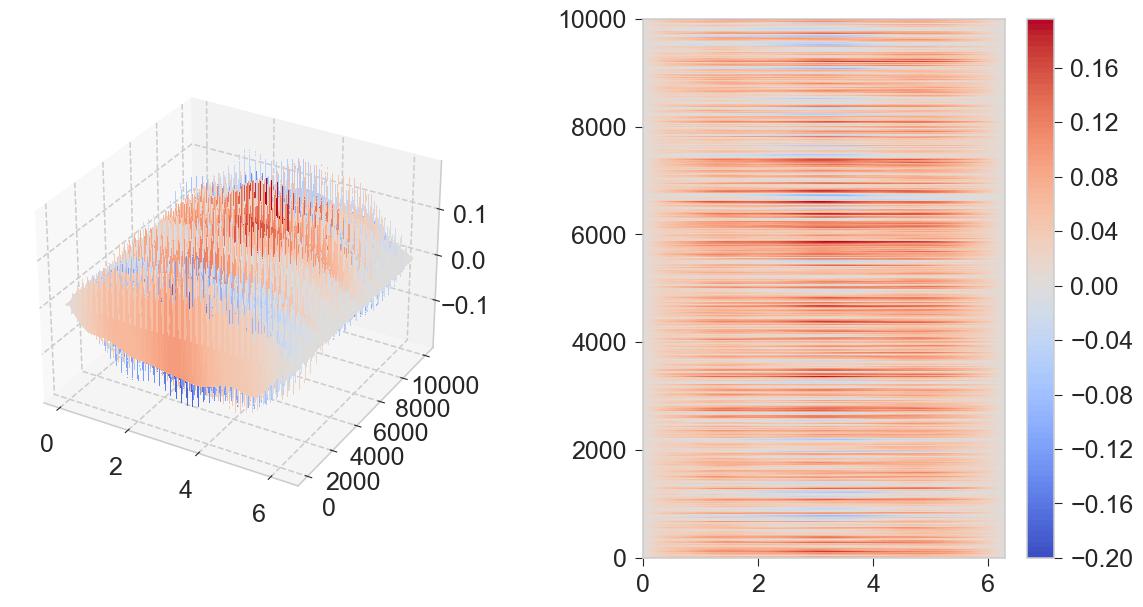}
    \put(100,60){\footnotesize{x}}
    \put(708,-25){\footnotesize{x}}
    \put(380,100){\footnotesize{t}}
    \put(515,270){\footnotesize{t}}
    \end{overpic}}
    \hspace{8mm}
    %\subfloat[$\alpha=1.25>\lambda_1$]{\begin{overpic}[scale=0.3]{no_m=10_SPDE_cos(3x)+1_alpha=1,25_10000_100000_sigma=0,05}
    \subfloat[$\alpha=1.25>\lambda_1$]{\begin{overpic}[scale=0.3]{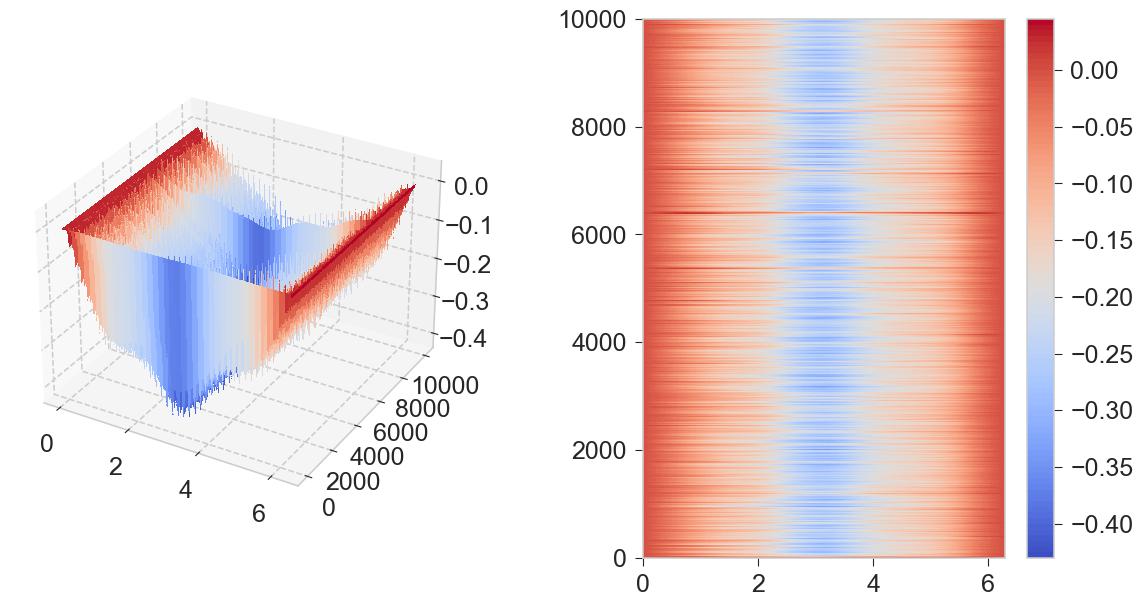}
    \put(100,60){\footnotesize{x}}
    \put(708,-25){\footnotesize{x}}
    \put(380,100){\footnotesize{t}}
    \put(515,270){\footnotesize{t}}
    \end{overpic}}\\
    \caption{Simulation of \eqref{mainsyst} with $g(x)=cos(3x)+1$ and $\lambda_1\approx 1.188$. Each subfigure presents a surf plot and a contour plot obtained with \eqref{app} with the same noise sample. On the left $\alpha$ is chosen before the bifurcation and on the right is taken beyond the bifurcation threshold. Metastable behaviour is visible on the second case.}
    \label{fig:m=10_SPDE_cos(3x)+1}
    %\subfloat[$\alpha=0.65<\lambda_1$]{\begin{overpic}[scale=0.3]{}
    \subfloat[$\alpha=0.65<\lambda_1$]{\begin{overpic}[scale=0.3]{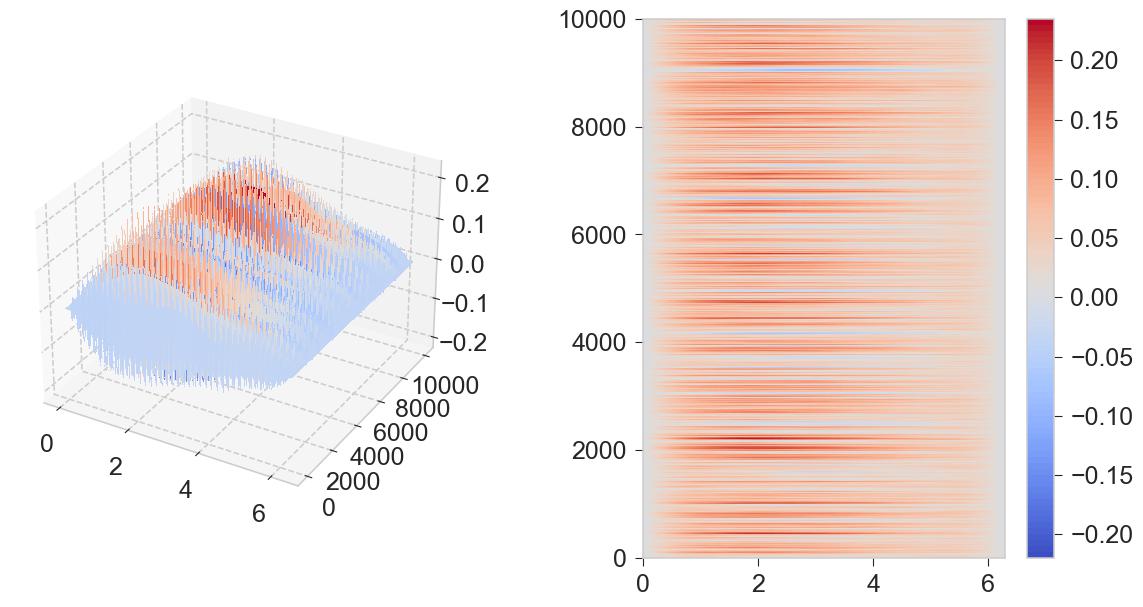}
    \put(100,60){\footnotesize{x}}
    \put(708,-25){\footnotesize{x}}
    \put(380,100){\footnotesize{t}}
    \put(515,270){\footnotesize{t}}
    \end{overpic}}
    \hspace{8mm}
    %\subfloat[$\alpha=0.75>\lambda_1$]{\begin{overpic}[scale=0.3]{}
    \subfloat[$\alpha=0.75>\lambda_1$]{\begin{overpic}[scale=0.3]{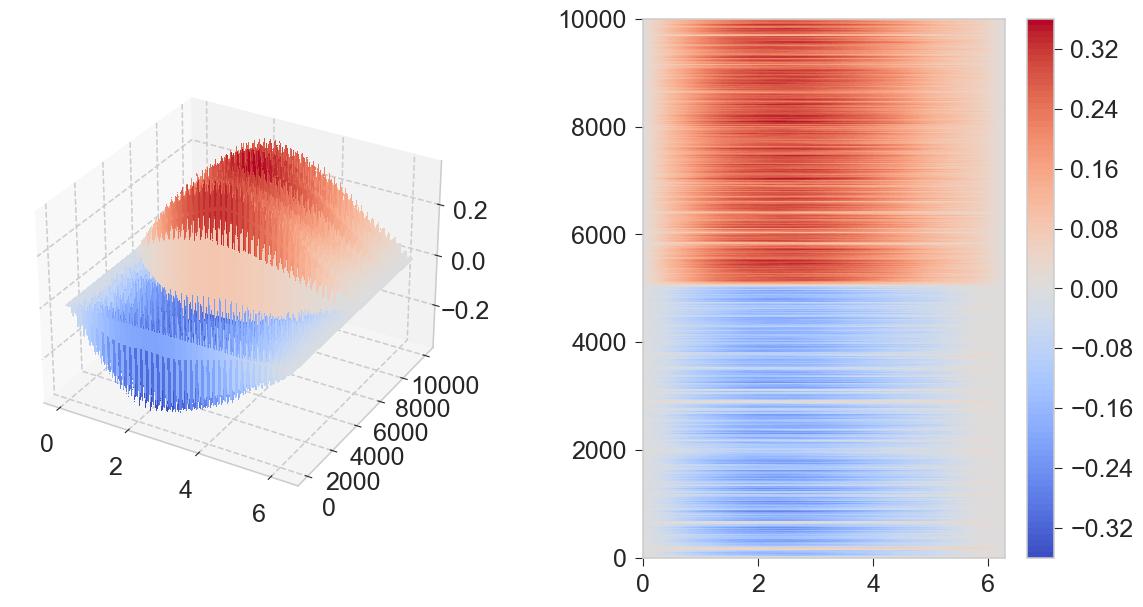}
    \put(100,60){\footnotesize{x}}
    \put(708,-25){\footnotesize{x}}
    \put(380,100){\footnotesize{t}}
    \put(515,270){\footnotesize{t}}
    \end{overpic}}
    \caption{Simulation of \eqref{mainsyst} with $g(x)=\dfrac{x}{L}$ and $\lambda_1\approx 0.708$. The choice of $\alpha$ and the corresponding behaviour is equivalent to the previous figure. The shape of $g$ influences the value $\lambda_1$ and the equilibria of the deterministic system \eqref{mainsyst}, i.e. for $\sigma=0$. Therefore it affects the behaviour of the solution and the bifurcation threshold.}
    \label{fig:m=10_SPDE_lin}
\end{figure}

\paragraph{Simulation of early warning signs.}\mbox{}\\
As previously stated, the linearization is more effective for $\alpha$ not close to $\lambda_1$. The rest of the section is devoted to compare numerically the early warning signs, meant as the left-hand side of \eqref{EWS1},\eqref{EWS2} and \eqref{EWS3}, applied to solutions of $\eqref{mainsyst}$ with the expected analytical result given by their application on \eqref{linsyst}. We show the effect on the early warning signs of the dissipative nonlinear term present in $\eqref{mainsyst}$ and how it hinders the divergences, in the limit $\alpha\longrightarrow\lambda_1$ from below, of the right-hand side of \eqref{EWS1},\eqref{EWS2} and \eqref{EWS3}.\\
The fact that the invariant measure of the linear system $\eqref{linsyst}$, $\mu$, is Gaussian with covariance operator $V_\infty$ and mean equal to the null function (\cite[Theorem 5.2]{DaPrato}) implies that
\begin{equation}\label{help}
\langle f_1,V_\infty f_2\rangle =\int_\mathcal{H} \langle f_1,w\rangle \langle f_2, w\rangle  \txtd \mu(w)\;,
\end{equation}
for all $f_1,f_2\in \mathcal{H}$. Therefore $\langle V_\infty e_{k_1},e_{k_2}\rangle$ from $\eqref{ews1}$ can be compared with
\begin{equation}\label{sim2}
\dfrac{1}{nt}\sum_{j=1}^{nt} \llangle\mathtt{u}_j, \mathtt{e}_{k_1}\rrangle \llangle\mathtt{u}_j, \mathtt{e}_{k_2}\rrangle-\bigg(\dfrac{1}{nt}\sum_{j_1=1}^{nt}\llangle\mathtt{u}_{j_1}, \mathtt{e}_{k_1}\rrangle\bigg) \bigg(\dfrac{1}{nt}\sum_{j_2=1}^{nt}\llangle\mathtt{u}_{j_2}, \mathtt{e}_{k_2}\rrangle\bigg)\;,
\end{equation}
that is the numerical covariance of the projection of the solution of \eqref{app} on the selected approximations of the eigenfunctions of $A_\alpha$. These are constructed as $\mathtt{e}_k(n):=e_k(n \; h)$ and obtained numerically through the "quantumstates" MATLAB function defined in \cite{driscoll2014chebfun}. The numerical scalar product $\llangle\cdot,\cdot\rrangle$ is defined by $\llangle\mathtt{v},\mathtt{w}\rrangle=h \sum_{n=1}^N \mathtt{v}(n) \mathtt{w}(n)$, for any $\mathtt{v},\mathtt{w}\in\mathbb{R}^N$.\footnote{The multiplication by $h$ is justified by the fact that the functions involved have value $0$ in $x=0$ and $x=L$.}\\
The plots in Figure \ref{fig:m=10_gen} illustrate for two examples of $g$ the results of \eqref{sim2} for the indexes $k=k_1=k_2\in\{1,2,3,4,5\}$ and $\alpha$ close to $\lambda_1$. They are then compared with 
\begin{equation}\label{blue}
\dfrac{\sigma^2}{2(\lambda_1-\alpha)}\sum_{j=1}^M \mathtt{q} (j) \llangle\mathtt{e}_1,\mathtt{b}_j\rrangle^2\;,
\end{equation}
 which is the numerical approximation of the right-hand side of $\eqref{ews1}$ on $k=1$, with $\{\mathtt{b}_j\}_{j=1}^M$ the row vectors of $\mathtt{O}\mathtt{e}'$ meant to replicate the eigenfunctions of $Q$.\footnote{$L=2\pi$, $N=100$, $T=5000$, $nt=100000$, $\sigma=0.01$, $M=D=10$. The two plots show results for different $\mathtt{O}$ and $\mathtt{q}$ which are randomly generated as previously described.}\\
It is clear from Figure \ref{fig:m=10_gen} that the dissipativity given by the nonlinear term in the system \eqref{mainsyst} hinders the variance of the system for $\alpha$ close to $\lambda_1$ and that the difference between early warning sign on \eqref{linsyst} and the average of \eqref{sim2} with $k_1=k_2=1$ on solutions given by \eqref{app} with different noise samples grows with $\alpha$. For $\alpha$ distant from $\lambda_1$ the values of such results is close and the behaviour of the plots is similar.

\begin{figure}[ht!]
    \centering
    \subfloat[$g(x)=cos(3x)+1$, $\lambda_1\approx1.188$]{
    %\begin{overpic}[scale=0.45]{no_m=10_cos(3x)+1_var_general_5000_100000.jpg}
    \begin{overpic}[scale=0.45]{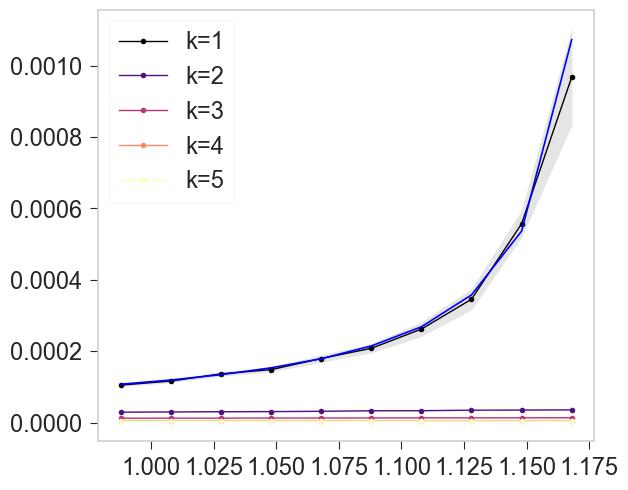}
    \put(550,-25){\footnotesize{$\alpha$}}
    \put(-70,550){\footnotesize{\rotatebox{270}{$\langle e_k,V_\infty e_k\rangle$}}}
    \end{overpic}}
    \hspace{16mm}
    \subfloat[$g(x)=\dfrac{x}{L}$, $\lambda_1\approx0.708$]{
    %\begin{overpic}[scale=0.45]{}
    \begin{overpic}[scale=0.45]{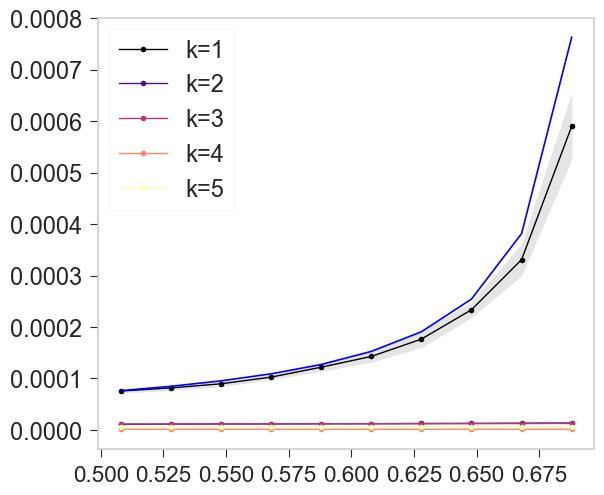}
    \put(550,-25){\footnotesize{$\alpha$}}
    \put(-70,550){\footnotesize{\rotatebox{270}{$\langle e_k,V_\infty e_k\rangle$}}}
    \end{overpic}}
    \caption{The results of \eqref{sim2} applied on the matrix $\mathtt{u}$, matrix obtained through the iteration of \eqref{app}, are displayed in the picture for $k=k_1=k_2\in\{1,2,3,4,5\}$ and different choices of $g$. The dots indicate the mean of such values obtained from $10$ simulations with same parameters and initial conditions, but generated with different noise samples. The grey area has width equal to the double of the recorded numerical standard deviation and it is centered on the mean results. The blue line displays the result $\eqref{blue}$ for the linear system. For $\alpha$ distant from $\lambda_1$ the black and blue line show similar values.}
    \label{fig:m=10_gen}
\end{figure}

A similar method can also be applied to replicate the early warning sign in $\eqref{ews3}$. We can choose a function $f_0\in L^\infty(\mathcal{H})$ such that $f_0(0)=f_0(L)=0$ and for which there exist the integer $0<p<N+2$ and $x_0=ph\in\mathcal{O}$ that satisfy 
\begin{equation}\label{ews2a}
\langle V_\infty f_0,f_0\rangle \approx\sigma^2\sum_{j_1=1}^\infty \sum_{j_2=1}^\infty \dfrac{ e_{j_1}(x_0) e_{j_2}(x_0)}{\lambda_{j_1}+\lambda_{j_2}-2\alpha}\bigg(\sum_{n=1}^\infty q_n \langle e_{j_1},b_n\rangle \langle e_{j_2}, b_n\rangle \bigg)\;,
\end{equation}
by Corollary \ref{EWS3}. From Proposition \ref{rootsprop}, one expects a hyperbolic-function divergence when $\alpha$ reaches $\lambda_1$ from below. Figures \ref{fig:m=10_p_cos(3x)+1} and \ref{fig:m=10_p_lin} compare $\langle V_\infty f_0,f_0\rangle $ for different values of $\alpha<\lambda_1$ 
in the form 
\begin{equation}\label{sim3}
\tilde{V}(p):=\dfrac{1}{nt}\sum_{j=1}^{nt} \mathtt{u}_j(p) \mathtt{u}_j(p)-\bigg(\dfrac{1}{nt}\sum_{j_1=1}^{nt}\mathtt{u}_{j_1}(p)\bigg) \bigg(\dfrac{1}{nt}\sum_{j_2=1}^{nt}\mathtt{u}_{j_2}(p)\bigg)\;,
\end{equation}
shown as black dots, and the approximation of the right-hand side in equation $\eqref{ews2a}$
\begin{equation}\label{blue2}
\sigma^2\sum_{j_1=1}^{M_1} \sum_{j_2=1}^{M_1} \dfrac{ \mathtt{e}_{j_1}(p) \mathtt{e}_{j_2}(p)}{\lambda_{j_1}+\lambda_{j_2}-2\alpha}\bigg(\sum_{n=1}^M \mathtt{q}(n) \llangle \mathtt{e}_{j_1},\mathtt{b}_n\rrangle \llangle \mathtt{e}_{j_2}, \mathtt{b}_n\rrangle \bigg)\;,
\end{equation}
in blue for a certain integer $0<M_1\ll N$.\footnote{Such truncation index is taken as $M_1=30$ in the figures.} The subplots in Figure \ref{fig:m=10_p_cos(3x)+1} and in Figure \ref{fig:m=10_p_lin} are given by the same simulations as in Figure \ref{fig:m=10_gen} $(a)$ and \ref{fig:m=10_gen} $(b)$ respectively. Each dot in the subplot is the average of \eqref{sim3} obtained from $10$ simulations which differ only by the sample for the noise taken. The numerical standard deviation is represented by the grey area.\footnote{The special case of cylindrical Wiener process, $Q=I$ for $I$ meant as the identity operator on $\mathcal{H}$, is particularly easy to simulate because of the freedom in the choice of the eigenfunctions of $Q$. Thus we could assume $\sum_{n=1}^\infty \langle e_{j_1},b_n\rangle \langle e_{j_2}, b_n\rangle =\delta_{j_1}^{j_2}$, the Kronecker delta.}\\
\begin{figure}[ht!]
    \centering
    %\subfloat[$p=20$]{\begin{overpic}[scale=0.33]{no_m=10_cos(3x)+1_var_p=20_5000_100000.jpg}
    \subfloat[$p=20$]{\begin{overpic}[scale=0.33]{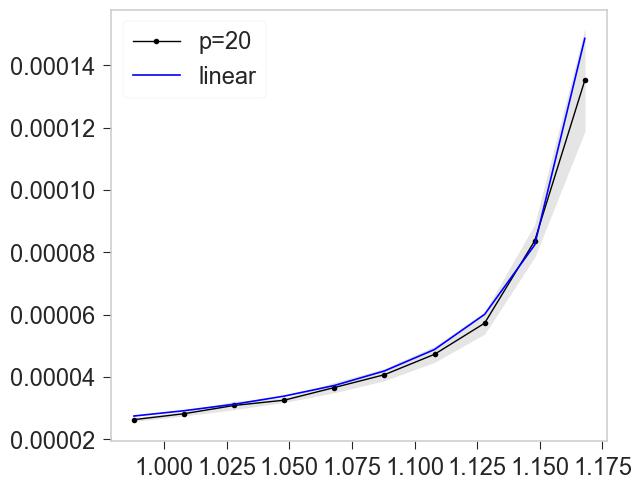}
    \put(550,-50){\footnotesize{$\alpha$}}
    \put(-100,500){\footnotesize{\rotatebox{270}{$\tilde{V}(p)$}}}
    \end{overpic}}
    \hspace{5mm}
    %\subfloat[$p=50$]{\begin{overpic}[scale=0.33]{no_m=10_cos(3x)+1_var_p=50_5000_100000.jpg}
    \subfloat[$p=50$]{\begin{overpic}[scale=0.33]{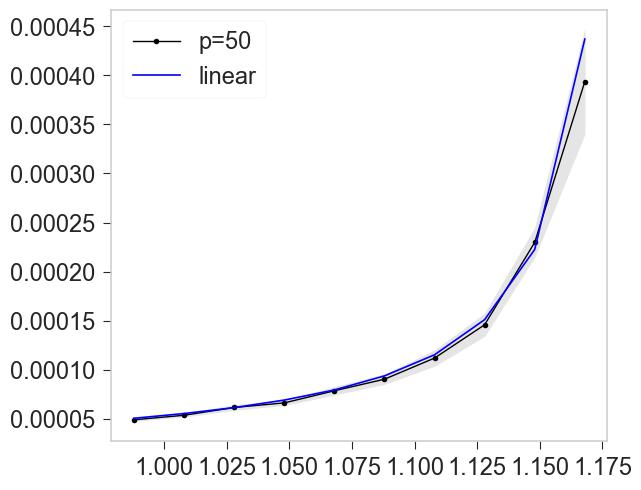}
    \put(550,-50){\footnotesize{$\alpha$}}
    \put(-100,500){\footnotesize{\rotatebox{270}{$\tilde{V}(p)$}}}
    \end{overpic}}
    \hspace{5mm}
    %\subfloat[$p=70$]{\begin{overpic}[scale=0.33]{no_m=10_cos(3x)+1_var_p=70_5000_100000.jpg}
    \subfloat[$p=70$]{\begin{overpic}[scale=0.33]{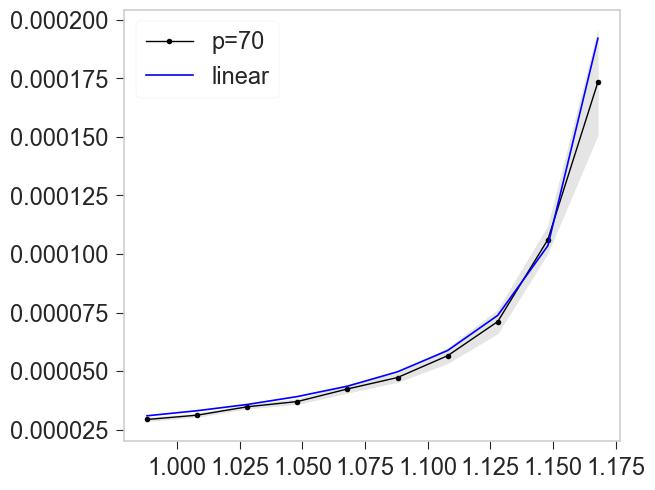}
    \put(550,-50){\footnotesize{$\alpha$}}
    \put(-100,500){\footnotesize{\rotatebox{270}{$\tilde{V}(p)$}}}
    \end{overpic}}\\
    \caption{Simulations of $\tilde{V}(p)$ obtained from $10$ sample solutions of \eqref{mainsyst} with $g(x)=cos(3x)+1$ and $\lambda_1\approx 1.188$ simulated with \eqref{app}. The black dots indicate the mean results and the width of the grey area corresponds to the double of the standard deviation for the relative $\alpha$. It is clear that the early warning sign is close to the expected result for the linear case until a neighbourhood of the bifurcation threshold on which the nonlinear dissipative term avoids the divergence. The space discretization is achieved taking $N=100$ internal points of $\mathcal{O}$ into account.}
    \label{fig:m=10_p_cos(3x)+1}
    %\subfloat[$p=20$]{\begin{overpic}[scale=0.33]{}
    \subfloat[$p=20$]{\begin{overpic}[scale=0.33]{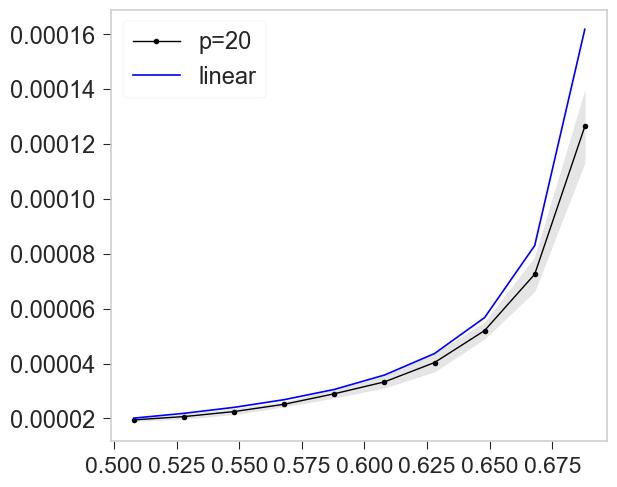}
    \put(550,-50){\footnotesize{$\alpha$}}
    \put(-100,500){\footnotesize{\rotatebox{270}{$\tilde{V}(p)$}}}
    \end{overpic}}
    \hspace{5mm}
    %\subfloat[$p=50$]{\begin{overpic}[scale=0.33]{}
    \subfloat[$p=50$]{\begin{overpic}[scale=0.33]{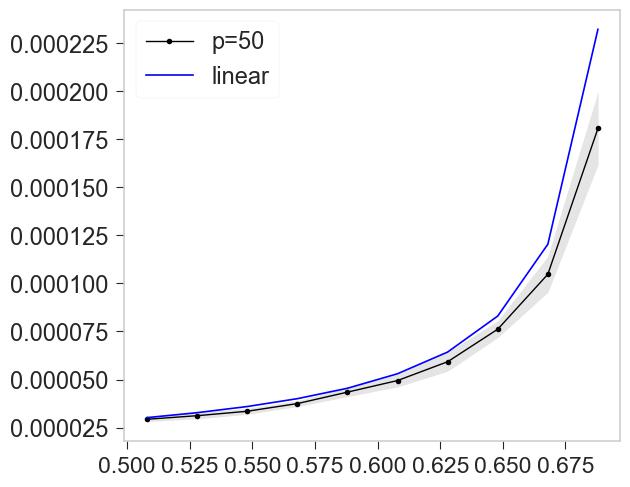}
    \put(550,-50){\footnotesize{$\alpha$}}
    \put(-100,500){\footnotesize{\rotatebox{270}{$\tilde{V}(p)$}}}
    \end{overpic}}
    \hspace{5mm}
    %\subfloat[$p=70$]{\begin{overpic}[scale=0.33]{}
    \subfloat[$p=70$]{\begin{overpic}[scale=0.33]{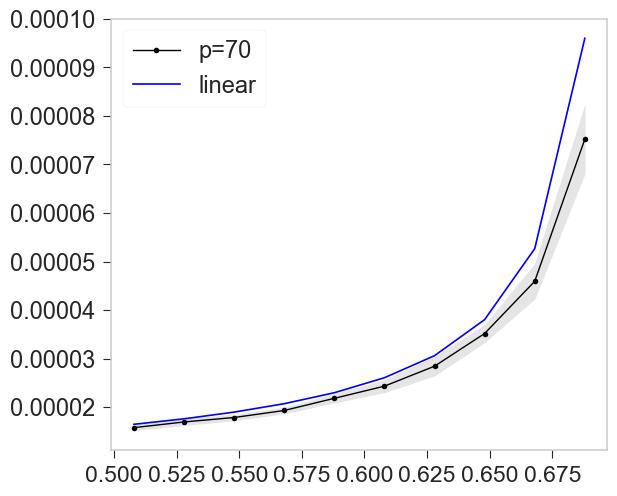}
    \put(550,-50){\footnotesize{$\alpha$}}
    \put(-100,500){\footnotesize{\rotatebox{270}{$\tilde{V}(p)$}}}
    \end{overpic}}\\
    \caption{Simulations of $\tilde{V}(p)$ obtained from $10$ sample solutions of \eqref{mainsyst} with $g(x)=\dfrac{x}{L}$ and $\lambda_1\approx 0.708$. The results are similar to the previous figure but the choice of $g$ appears to have influence over the difference of the expected early warning sign applied on the solution of the linear system \eqref{linsyst} and its simulation applied on \eqref{mainsyst}.}
    \label{fig:m=10_p_lin}
\end{figure}

As in Figure $\ref{fig:m=10_gen}$, the early warning sign \eqref{sim3} on the linear system assumes higher values than the average numerical variance obtained on projections on chosen spaces \eqref{blue2} of solutions of the nonlinear system. The difference in the two results is more evident when the dissipative nonlinear term is more relevant, which is close to the bifurcation. For such values of $\alpha$ it seems also evident that the standard deviation on the simulations is wider. The cause of it is clear from Theorem \ref{tm1} and Theorem \ref{tm2}.

\section*{Conclusion and outlook}

In this paper we have studied properties of a generalization of a Chafee-Infante type PDE with Dirichlet boundary conditions on an interval, by introducing a component heterogeneous in space. We have proven that it has a local supercritical pitchfork bifurcations from the trivial branch of zero solutions, equivalently to the original homogeneous PDE. We have then considered the effects of the crossing of the first bifurcation when the system is perturbed by additive noise. We have shown the existence of a global attractor regardless the crossing of the bifurcation threshold and found the rate at which the highest possible value of the FTLE approaches $0$. It is then a crucial observation that this rate of the FTLE approaching zero is precisely the inverse to the order of divergence found in the early warning sign given by the variance over infinite time of the linearized system along almost any direction on $\mathcal{H}$ when approaching the bifurcation threshold from below. An equivalent rate was then proven when studying the variance for any specific internal point of the interval. Note carefully that the consistent rates for FTLE scaling and the corresponding inverse covariance operator and pointwise-variance scaling are obtained by completely different proof techniques. Yet, one can clearly extract from the proofs that the linearized variational and linearized covariance equations provide the explicit rates, so a natural conjecture is that the same principle of common scaling laws will also apply to a wide variety of other bifurcations. In summary, here we have already given a complete picture of a very general class of SPDEs with a additive noise when the deterministic PDE exhibits a classical single-eigenvalue crossing bifurcation point from a trivial branch.  \\

In order to study the reliability of the early warning sign on the nonlinear system, the first exit-time from a small neighbourhood of the deterministic solution was studied by obtaining an upper bound of its distribution function. Hence, lower bounds of the moments have been derived. Lastly, numerical simulations have shown the consistency of the behaviour of the early warning sign applied on the solutions of the linear and nonlinear system for sufficient distance from the threshold.\\

We highlight we have obtained much freedom on the choice of the covariance operator $Q$, that defines the noise. In particular, almost any assumed set of eigenfunctions of $Q$ works. However, the eigenvalues are all assumed to be positive, in order to ensure a definition for the invariant measure. Our results hold also for weaker assumptions and a broader choice of operators. For instance, the requirements for the existence of the early warning signs of the linear problem hold for diagonalizable operators whose eigenfunctions synchronize to those of $Q$, as described in \eqref{sync}, and with eigenvalues diverging with a sufficiently fast polynomial rate. Whereas a lower bound for the moments of the first exit-time of the nonlinear solution has been found, an estimate for the moments of the distance between the solutions of the linear and nonlinear problem remains an open problem but our numerical results only show a slow divergence very close to the bifurcation point, which is a well-known phenomenon for finite-dimensional bifurcation problems with noise.

\section*{Acknowledgments}
This project has received funding from the European Union’s Horizon 2020 research and innovation programme under Grant Agreement 956170. The authors want to thank Maximilian Engel and Alexandra Neamtu for helpful discussions.

\appendix

\section{Appendix: Properties of the Schr\"odinger operator}

The Schr\"odinger operator $A:=\Delta-g$ inherits important properties from the Laplacian operator, assumed the Dirichlet conditions to be satisfied, and $g\in L^\infty(\mathcal{O})$ almost everywhere positive. Firstly, the system

\begin{equation}\label{eq:1}\begin{cases}
A u =- w\\
\left.u(\cdot,t)\right|_{\partial\mathcal{O}}=0\;\;\;\;,\;\;\;\;\forall t\geq0\;
\end{cases}\end{equation}
admits unique weak solutions in $\mathcal{V}$ for any $w\in \mathcal{H}$. In fact they satisfy
\begin{equation*} \langle (-A)u,v\rangle =\langle \nabla u, \nabla v\rangle +\langle u, g v\rangle =\langle w, v\rangle \;\;\;,\;\;\forall v \in \mathcal{V}\;\;.\end{equation*}
By Poincarè Inequality there exist $c,c'>0$ so that
\begin{equation*}\begin{split}
&\lvert\lvert v\rvert\rvert_\mathcal{V}^2=-\langle v,\Delta v\rangle \leq -\langle v,\Delta v\rangle +\langle v,gv\rangle =-\langle v,Av\rangle =: \lvert\lvert v\rvert\rvert_A\\
&\leq \lvert\lvert v\rvert\rvert_\mathcal{V}^2+\lvert\lvert g\rvert\rvert_\infty \lvert\lvert v\rvert\rvert_\mathcal{H}^2 \leq c \Big( \lvert\lvert v\rvert\rvert_\mathcal{V}^2+ \lvert\lvert v\rvert\rvert_\mathcal{H}^2 \Big) \leq c' \lvert\lvert v\rvert\rvert_\mathcal{V} \end{split}\end{equation*}
for any $v \in \mathcal{V}$. Therefore the spaces $\mathcal{V}=\mathcal{D}((-\Delta)^\frac{1}{2})$ and $\mathcal{D}((-A)^\frac{1}{2})$ are the same in the sense that the norms on which they are defined are equivalent.\\
The existence and uniqueness of the weak solutions of \eqref{eq:1} is implied by the fact that the map $ v\mapsto \langle w,v\rangle =\langle (-A)u,v\rangle$ is continuous in $\mathcal{V}$ with norm $\lvert\lvert\cdot\rvert\rvert_A$:

\begin{equation}\label{eq:2} \lvert\langle w,v\rangle \rvert \leq \lvert\lvert w \rvert\rvert_\mathcal{H} \; \lvert\lvert v \rvert\rvert_\mathcal{H} \leq c \lvert\lvert w \rvert\rvert_\mathcal{H} \; \lvert\lvert v \rvert\rvert_\mathcal{V}  \leq c \lvert\lvert w \rvert\rvert_\mathcal{H} \; \lvert\lvert v \rvert\rvert_A, \end{equation}
for a constant $c>0$. We can then use Riesz Theorem to assert that $\exists! u \in \mathcal{V}$ so that $\langle \nabla u, \nabla v\rangle +\langle u,g v\rangle =\langle w,v\rangle $. Defining the inverse of $-A$, the operator $(-A)^{-1}:\mathcal{H}\longrightarrow \mathcal{V}$, we have shown that the system \eqref{eq:1} admits an unique solution $(-A)^{-1} w=u\in\mathcal{V}$ for any $w\in \mathcal{H}$. From Rellich-Kondrachov Theorem and the fact that $(-A)^{-1}:\mathcal{H}\longrightarrow \mathcal{H}$ is a self-adjoint compact operator, its eigenfunctions form a basis in $\mathcal{H}$. Since $(-A)^{-1}$ is the inverse of $A$, they share the same eigenfunctions $\{e_k\}_{k\in\mathbb{N}\setminus\{0\}}$. We label as $\eta_k$ the eigenvalue of $(-A)^{-1}$ corresponding to the eigenfunction $e_k$ for any $k\in\mathbb{N}\setminus\{0\}$ and we set $\{\eta_k\}_{k\in\mathbb{N}\setminus\{0\}}$ to be a decreasing sequence.\\
From
\begin{equation*}\langle \nabla (-A)^{-1} w, \nabla v\rangle +\langle (-A)^{-1}w,gv\rangle =\langle w,v\rangle \;\;, \end{equation*}
by taking $v,w=e_k \in \mathcal{V}$ for any $k\in\mathbb{N}\setminus\{0\}$ we can show that

\begin{equation*}\eta_k \Big(\langle \nabla e_k,\nabla e_k\rangle +\langle e_k,g\; e_k\rangle \Big) =\langle e_k,e_k\rangle \;\; \end{equation*}
and therefore $\eta_k$ is strictly positive. By construction we have proven that the eigenvalues of $-A$, the family $\{\lambda_k\}_{k\in\mathbb{N}\setminus\{0\}}$, are also strictly positive.
\medskip

A deep description on the asymptotic behaviour of the eigenfunctions and eigenvalues of $-A$ can be found in \cite[Theorem 2.4]{poschel1987inverse}. In particular
\begin{equation}\label{sync}
\Big\lvert \lambda_k-\lambda_k'-\int_0^L g(x) \txtd x\Big\rvert^2+\Big\lvert\Big\lvert e_k-e_k' \Big\rvert\Big\rvert_\infty+\dfrac{1}{k}\Big\lvert\Big\lvert \dfrac{d}{dx}e_k-\dfrac{d}{dx}e_k' \Big\rvert\Big\rvert_\infty=O\Big(\dfrac{1}{k}\Big)
\end{equation}
indicates that for high index $k$ the influence of the heterogeneity on space induced by $g$ on the respective mode becomes less relevant and $A$ behaves similarly to $\Delta-\int_0^L g(x) \txtd x$ along such direction.\\
Another important property of $A$ is $\lambda_k<\lambda_{k+1}$ for $k\in\mathbb{N}\setminus\{0\}$, i.e. the eigenvalues are simple (\cite[Theorem 2.2]{poschel1987inverse}).

\section{Appendix: Operator Theory}
In the current appendix we make use of methods and results by \cite{DaPrato} and definitions from \cite{edmunds2018elliptic}. Its goal is to show the continuity in $\mathcal{V}$ of the solution of \eqref{mainsyst}.
\begin{deff}
Let $X$ be a Banach space. A \textbf{$\mathcal{C}_0$-semigroup} on $X$ is defined as a family of operators $S=\{S(t):t\geq0\}$, which are assumed continuous from $X$ on itself, such that
\begin{itemize}
\item $S(t_1)S(t_2)=S(t_1+t_2)$ for all $t_1,t_2\geq 0$,
\item $S(0)=I$,
\item for each $\phi\in X$, $S(\cdot)\phi:[0,+\infty)\rightarrow X$ is continuous.
\end{itemize}
Additionally, if $\lvert\lvert S(t)\rvert\rvert_{\mathcal{L}(X)}\leq 1$ for all $t\geq 0$ it is called a \textbf{$\mathcal{C}_0$-contraction semigroup} on $X$.\\
The \textbf{infinitesimal generator} of a $\mathcal{C}_0$-semigroup on $X$ is an operator $A:\mathcal{D}(A)\rightarrow X$ such that
\begin{equation*} A=\underset{t\rightarrow0}{\lim} \dfrac{ S(t) \phi -\phi}{t}\end{equation*}
for all $\phi\in \mathcal{D}(A)$.
\end{deff}
The Schrödinger operator $A=\Delta-g$ generates a $\mathcal{C}_0$-semigroup on $\mathcal{H}$ due to the following theorem.
\begin{thm}[Hille-Yosida]\label{hille-yosida}
A map $A:\mathcal{D}(A)\rightarrow X$, for which $\mathcal{D}(A)\subset X$, is the generator of a $\mathcal{C}_0$-contraction semigroup on the Banach space $X$ if and only if $A$ is closed, densely defined and
\begin{equation*} \lvert\lvert(\lambda I-A)^{-1}\rvert\rvert_{\mathcal{L}(X)}\leq \dfrac{1}{\lambda}\end{equation*}
for all $\lambda>0$.
\end{thm}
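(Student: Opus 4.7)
The plan is to prove the two directions of the Hille--Yosida theorem separately, following the standard strategy but flagging the technical heart of the argument. Throughout, let $S=\{S(t):t\geq 0\}$ denote the candidate $\mathcal{C}_0$-contraction semigroup and let $R(\lambda):=(\lambda I-A)^{-1}$ whenever this inverse makes sense.

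For the necessity direction (assuming $A$ generates a $\mathcal{C}_0$-contraction semigroup $S$), I would proceed in three steps. First, to recover the resolvent, I define the candidate $R(\lambda)x:=\int_0^\infty e^{-\lambda t}S(t)x\,\textnormal{d}t$ for $\lambda>0$, which converges absolutely as a Bochner integral since $\|S(t)\|_{\mathcal{L}(X)}\leq 1$, and immediately yields $\|R(\lambda)\|_{\mathcal{L}(X)}\leq 1/\lambda$. A standard manipulation using the semigroup property shows that $R(\lambda)$ maps $X$ into $\mathcal{D}(A)$ and that $(\lambda I-A)R(\lambda)=I=R(\lambda)(\lambda I-A)$, so $R(\lambda)$ is the resolvent and the norm estimate is the required one. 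Second, density of $\mathcal{D}(A)$ follows from the observation that $x_h:=\frac{1}{h}\int_0^h S(s)x\,\textnormal{d}s\in\mathcal{D}(A)$ and $x_h\to x$ as $h\downarrow 0$ by continuity of $s\mapsto S(s)x$. Third, closedness of $A$ follows from a routine passage to the limit in the defining difference quotient $(S(t)x_n-x_n)/t$, using strong continuity of $S$.

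For the sufficiency direction (which is the main obstacle), I would build the semigroup via the Yosida approximation. Define
\begin{equation*}
A_\lambda:=\lambda A R(\lambda)=\lambda^2 R(\lambda)-\lambda I,\qquad \lambda>0.
\end{equation*}
These are bounded operators, so $T_\lambda(t):=\exp(tA_\lambda)$ is well defined by power series. Using $A_\lambda=\lambda^2 R(\lambda)-\lambda I$ and the resolvent estimate $\|\lambda R(\lambda)\|\leq 1$, one gets $\|T_\lambda(t)\|_{\mathcal{L}(X)}\leq e^{-\lambda t}\,\|\exp(t\lambda^2 R(\lambda))\|\leq e^{-\lambda t}e^{t\lambda}=1$, so each $T_\lambda$ is a contraction semigroup. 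The crucial lemma to prove is that $\lambda R(\lambda)x\to x$ as $\lambda\to\infty$ for every $x\in X$: on $\mathcal{D}(A)$ this follows from $\lambda R(\lambda)x-x=R(\lambda)Ax$ and the norm bound, and then extends to all of $X$ by density combined with uniform boundedness. Consequently $A_\lambda x\to Ax$ for $x\in\mathcal{D}(A)$.

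The heart of the argument is then to show that $\{T_\lambda(t)x\}_\lambda$ is Cauchy in $\lambda$ uniformly on bounded time intervals for each $x\in\mathcal{D}(A)$. The standard trick is to write, using that $A_\lambda$ and $A_\mu$ commute with each other and with their semigroups,
\begin{equation*}
T_\lambda(t)x-T_\mu(t)x=\int_0^t \frac{\textnormal{d}}{\textnormal{d}s}\bigl(T_\mu(t-s)T_\lambda(s)x\bigr)\,\textnormal{d}s=\int_0^t T_\mu(t-s)T_\lambda(s)\bigl(A_\lambda x-A_\mu x\bigr)\,\textnormal{d}s,
\end{equation*}
so that $\|T_\lambda(t)x-T_\mu(t)x\|\leq t\,\|A_\lambda x-A_\mu x\|\to 0$. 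This yields a strong limit $S(t)x:=\lim_{\lambda\to\infty}T_\lambda(t)x$ on $\mathcal{D}(A)$, which extends by density and uniform contractivity to all of $X$, defining a $\mathcal{C}_0$-contraction semigroup. Passing to the limit in the identity $T_\lambda(t)x-x=\int_0^t T_\lambda(s)A_\lambda x\,\textnormal{d}s$ for $x\in\mathcal{D}(A)$ identifies the generator of $S$ as an extension of $A$. Since both $A$ and the generator of $S$ have $(0,\infty)$ in their resolvent set (by hypothesis and by the necessity direction respectively), a standard argument shows these two closed operators coincide, completing the proof. I expect the commutation argument and the identification of the generator at the end to be the technically delicate steps, while everything else reduces to careful bookkeeping with Bochner integrals and resolvent identities.
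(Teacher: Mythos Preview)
Your outline is the standard textbook proof of the Hille--Yosida theorem via Yosida approximation, and it is correct as sketched. However, the paper does not actually prove this statement: it is quoted in Appendix~B as a classical result without proof and is used only to assert that the Schr\"odinger operator $A=\Delta-g$ generates a $\mathcal{C}_0$-contraction semigroup. There is therefore nothing in the paper to compare your argument against; your proposal simply supplies the well-known proof that the authors cite from the literature.
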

The following lemma is a generalization of Lemma $5.19$  by \cite{DaPrato} and requires weaker assumptions than the ones associated to the covariance operator $Q$ used in this paper, i.e. properties \ref{firstproperty}, \ref{secondproperty} and \ref{thirdproperty}.
\begin{lm}\label{contlm}
Set $D\in\mathbb{N}\setminus\{0\}$ and $D'\in\mathbb{N}$. Assume that there exists a dual indexed sequence $\{\rho_{j_1}^{j_2}\}_{j_1,j_2\in\mathbb{N}\setminus\{0\}}$ in $[0,1]$, so that the eigenfunctions, $\{b_j\}_{j\in\mathbb{N}\setminus\{0\}}$, and the eigenvalues, $\{q_j\}_{j\in\mathbb{N}\setminus\{0\}}$, of $Q$ satisfy the following properties:
\begin{enumerate}[label=\textnormal{(B\arabic*)}]
    \item \label{propertyB1} for all $0<j_2\leq D$, $b_{j_2}=\sum_{j_1\leq D} \rho_{j_1}^{j_2} e_{j_1}'$,
    \item \label{propertyB2} for all $D<j_2$, $b_{j_2}=\sum_{\{\lvert j_1-j_2\rvert\leq D', j_1>D\}} \rho_{j_1}^{j_2} e_{j_1}'$,
    \item \label{propertyB3} there exists $\gamma>0$ for which $\sum_{j=1}^\infty q_j \lambda_{j+D'}'^\gamma <+\infty$.
\end{enumerate}
Then, for
\begin{equation}\label{wdelta} w_\Delta(t,x):=\sum_{k=1}^\infty \sqrt{q_k} \sum_{n=1}^{\infty} \rho_n^k e_n'(x) \int_0^t \textnormal{e}^{-\lambda_n' (t-t_1)} \txtd \beta_k(t_1), \end{equation}
for $t\geq 0$, $x\in[0,L]$ and $\{\beta_k\}_{k\in\mathbb{N}\setminus\{0\}}$ a family of independent Wiener processes, the following estimations hold: there exists $C_1>0$ such that
\begin{equation}\label{ine1} \mathbb{E}\lvert \nabla( w_\Delta (t,x)-w_\Delta(t,x')) \rvert^2\leq C_1 \lvert x -x' \rvert^2 \end{equation}
\begin{equation}\label{ine2} \mathbb{E}\lvert \nabla( w_\Delta (t,x)-w_\Delta(t',x)) \rvert^2\leq C_1 \lvert t -t' \rvert^2 \end{equation}
for all $t,t'\geq 0$ and for all $x,x'\in [0,L]$.
\end{lm}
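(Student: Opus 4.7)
The plan is to exploit the independence of the Wiener processes $\{\beta_k\}_{k\in\mathbb{N}\setminus\{0\}}$ together with the near-diagonal support of the coefficients $\rho_n^k$ supplied by \textnormal{(B1)}--\textnormal{(B2)}, and reduce both estimates to a direct application of It\^o's isometry which is then closed by the summability hypothesis \textnormal{(B3)}. Set $I_n^k(t):=\int_0^t \textnormal{e}^{-\lambda_n'(t-t_1)}\,\txtd\beta_k(t_1)$ and $\eta_k(t,x):=\sum_n \rho_n^k e_n'(x)\,I_n^k(t)$, so that $w_\Delta(t,x)=\sum_k \sqrt{q_k}\,\eta_k(t,x)$. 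Because the $\{\beta_k\}$ are independent, so are the processes $\{\eta_k\}_k$, and hence
\begin{equation*}
\mathbb{E}\lvert \nabla w_\Delta(t,x)-\nabla w_\Delta(t,x')\rvert^2
\;=\;\sum_{k=1}^\infty q_k\,\mathbb{E}\lvert \nabla\eta_k(t,x)-\nabla\eta_k(t,x')\rvert^2.
\end{equation*}

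For each fixed $k$, rewriting the increment as a single $\beta_k$-stochastic integral and applying It\^o's isometry gives
\begin{equation*}
\mathbb{E}\lvert\nabla\eta_k(t,x)-\nabla\eta_k(t,x')\rvert^2
=\int_0^t\!\Bigl(\sum_n \rho_n^k(\nabla e_n'(x)-\nabla e_n'(x'))\,\textnormal{e}^{-\lambda_n'(t-s)}\Bigr)^{\!2}\txtd s.
\end{equation*}
By \textnormal{(B1)}--\textnormal{(B2)} the inner sum contains at most $N_\star:=\max\{D,\,2D'+1\}$ non-zero terms, so Cauchy--Schwarz converts the square of the sum into $N_\star$ times a sum of squares. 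Combined with the explicit interpolation
\begin{equation*}
\lvert\nabla e_n'(x)-\nabla e_n'(x')\rvert^{2}\leq C\,\lambda_n'^{\,1+\gamma}\lvert x-x'\rvert^{2\gamma},\qquad 0\leq\gamma\leq 1,
\end{equation*}
(which follows from $\|\nabla e_n'\|_\infty\leq C\sqrt{\lambda_n'}$ and $\|\nabla^2 e_n'\|_\infty\leq C\lambda_n'$) and $\int_0^t \textnormal{e}^{-2\lambda_n'(t-s)}\txtd s\leq 1/(2\lambda_n')$, we obtain
\begin{equation*}
\mathbb{E}\lvert\nabla\eta_k(t,x)-\nabla\eta_k(t,x')\rvert^2
\leq C\,N_\star\,\lvert x-x'\rvert^{2\gamma}\sum_n(\rho_n^k)^2\lambda_n'^{\,\gamma}.
\end{equation*}
Property \textnormal{(B2)} forces the support of $n\mapsto\rho_n^k$ to lie in $\{n:\lvert n-k\rvert\leq D'\}$ for $k>D$, so $\lambda_n'\leq\lambda_{k+D'}'$ on that support; the orthonormality of $\{b_k\}$ gives $\sum_n(\rho_n^k)^2=1$, hence $\sum_n(\rho_n^k)^2\lambda_n'^{\,\gamma}\leq\lambda_{k+D'}'^{\,\gamma}$. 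The remaining indices $k\leq D$ contribute a bounded constant, while $\sum_{k>D}q_k\lambda_{k+D'}'^{\,\gamma}<\infty$ by \textnormal{(B3)}. Assembling the bounds yields the desired spatial estimate (with the largest admissible $\gamma$, sufficient to invoke Kolmogorov's continuity criterion).

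For the temporal estimate I would assume $t'<t$ and split
\begin{equation*}
I_n^k(t)-I_n^k(t')=\bigl(\textnormal{e}^{-\lambda_n'(t-t')}-1\bigr)I_n^k(t')+\int_{t'}^{t}\textnormal{e}^{-\lambda_n'(t-s)}\txtd\beta_k(s),
\end{equation*}
writing each piece as a single It\^o integral with respect to $\beta_k$ and running the same isometry/Cauchy--Schwarz/\textnormal{(B3)} pipeline. The first term contributes a factor $\bigl(\lambda_n'\lvert t-t'\rvert\bigr)^{2\gamma}/\lambda_n'$ after interpolating $\lvert\textnormal{e}^{-\lambda_n'(t-t')}-1\rvert\leq(\lambda_n'\lvert t-t'\rvert)^{\gamma}$; the second has variance $(1-\textnormal{e}^{-2\lambda_n'(t-t')})/(2\lambda_n')$, which interpolates between $1/\lambda_n'$ and $t-t'$ to yield a bound of order $\lvert t-t'\rvert^{\gamma}\lambda_n'^{\,\gamma-1}$. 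Summation in $k$ closes exactly as above through \textnormal{(B3)}.

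The main obstacle is calibrating the interpolation exponent $\gamma$ against \textnormal{(B3)}: one trades Hölder regularity in $x$ (or $t$) against extra powers of $\lambda_n'$, and \textnormal{(B3)} controls precisely how many such powers can be absorbed into the $k$-sum. Once this balance is fixed, everything else is routine: It\^o isometry in each mode, Cauchy--Schwarz over a uniformly bounded index set furnished by \textnormal{(B1)}--\textnormal{(B2)}, and the orthonormality of $\{b_k\}_k$. The resulting estimates then feed into Kolmogorov's continuity theorem (as in \cite[Theorem~3.3]{DaPrato}) to deduce the continuity in $\mathcal{V}$ claimed at the beginning of the appendix.
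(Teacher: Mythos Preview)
Your proposal is correct and follows essentially the same route as the paper: both arguments use independence of the $\{\beta_k\}$ to reduce to a sum over $k$, apply It\^o's isometry, exploit the banded structure \textnormal{(B1)}--\textnormal{(B2)} to control the inner sum, interpolate $|\nabla e_n'(x)-\nabla e_n'(x')|$ and $|\textnormal{e}^{-\lambda_n'(t-t')}-1|$ against powers of $\lambda_n'$, and close with \textnormal{(B3)}. The only organisational difference is that you use Cauchy--Schwarz over the finite support (at most $N_\star$ terms) together with $\sum_n(\rho_n^k)^2=1$ to avoid cross terms, whereas the paper carries the double sum $\sum_{n,m}|\rho_n^k\rho_m^k|$ explicitly and bounds it via $|\rho_n^k|\leq 1$ and the ratio $\lambda_{k+D'}'/\lambda_{k-D'}'$; your packaging is slightly cleaner but the substance is identical.
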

\begin{proof}
For simplicity we define $\{\lambda_{-j}'\}_{j\in\mathbb{N}}$ as $\lambda_{-j}:=\lambda_1$ for any $j\in\mathbb{N}$.\\
Before proving the well-posedness of $\nabla w_\Delta(t,x)$ in $L^2(\Omega,\mathcal{F},\mathbb{P})$ we introduce tools that are found in the proof. First is the existence of $C>0$ for which  $\lvert e_k'(x)\rvert\leq C$ and $\lvert \nabla e_k'(x)\rvert\leq C\sqrt{\lambda_k'}$ for all $x\in[0,L]$ and $k\in\mathbb{N}\setminus\{0\}$. Another useful tool is the fact that $\sum_{j=1}^\infty q_j \lambda_{j+D'}'^\gamma <+\infty$ implies, for monoticity and order of divergence of $\lambda_k'$ in respect to $k$, that $\sum_{j=1}^\infty q_j \lambda_{j+D'}'^\epsilon <+\infty$ for any $\epsilon\leq \gamma$.\\
We now prove that the outer series that defines $\nabla w_\Delta(t,x)$ converges in $L^2(\Omega,\mathcal{F},\mathbb{P})$. In order to achieve that we use $0\leq\rho_{j_1}^{j_2}\leq1$ for all $j_1,j_2$ and Ito's isometry:
\begin{equation*}\begin{split} &\mathbb{E} \Bigg\lvert \sum_{k=k_1}^{k_2} \sqrt{q_k} \sum_{n=1}^{\infty} \rho_n^k \nabla e_n'(x) \int_0^t \textnormal{e}^{-\lambda_n' (t-t_1)} \txtd \beta_k(t_1) \Bigg\rvert^2\\
&\leq C^2 \sum_{k=k_1}^{k_2} q_k \sum_{n,m} \lvert\rho_n^k \rho_m^k\rvert \sqrt{\lambda_n'} \sqrt{\lambda_m'} \int_0^t \textnormal{e}^{-(\lambda_n'+\lambda_m') (t-t_1)} \txtd t_1 =C^2 \sum_{k=k_1}^{k_2} q_k \sum_{n,m} \lvert\rho_n^k \rho_m^k\rvert \dfrac{\sqrt{\lambda_n'} \sqrt{\lambda_m'}}{\lambda_n'+\lambda_m'} \\
& \leq C^2 \sum_{k=k_1}^{D} q_k \sum_{n,m\leq D} \dfrac{\sqrt{\lambda_n'} \sqrt{\lambda_m'}}{\lambda_n'+\lambda_m'}+ C^2 \sum_{k=D+1}^{k_2} q_k \sum_{\lvert n-k\rvert\leq D',\lvert m-k\rvert\leq D'} \dfrac{\sqrt{\lambda_n'} \sqrt{\lambda_m'}}{\lambda_n'+\lambda_m'}\\
&\leq C^2 \sum_{k=k_1}^{D} q_k D^2 \dfrac{\lambda_{D}'}{2 \lambda_{1}'} + C^2 \sum_{k=D+1}^{k_2} q_k (2D'+1)^2 \dfrac{\lambda_{k+D'}'}{2 \lambda_{k-D'}'}
\leq \dfrac{C^2}{2 \lambda_1'} \max_{j\in \{D,1+2D'\}}\{\lambda_{j}'\;j^2\} \sum_{k=1}^{\infty} q_k <\infty,\end{split}\end{equation*}
because of the definition of $\{\lambda_j'\}_j$.\\
The proof for the estimate $\eqref{ine1}$ is the following:
\begin{equation*} \mathbb{E} \lvert \nabla( w_\Delta (t,x)-w_\Delta(t,x')) \rvert^2\leq \sum_{k=1}^\infty q_k \sum_{n,m} \lvert\rho_n^k \rho_m^k\rvert \sqrt{\lambda_n'} \sqrt{\lambda_m'} \lvert e_n''(x) - e_n''(x') \rvert \lvert e_m''(x) - e_m''(x') \rvert  \int_0^t \textnormal{e}^{-(\lambda_n'+\lambda_m') (t-t_1)} \txtd t_1 \end{equation*}
by defining $e_k''(x)=\sqrt{\dfrac{2}{L}}\text{cos}\Big(\dfrac{\pi k}{L} x\Big)$, for all $k\in\mathbb{N}\setminus\{0\}$ and $x\in\mathcal{O}$. Since $\lvert \nabla e_k''(x)\rvert\leq C\sqrt{\lambda_k'}$, from Lagrange's theorem we can obtain
\begin{equation}\label{qqq}\lvert e_k''(x)-e_k''(x')\rvert \leq 2^{1-2\epsilon} C \lambda_k'^\epsilon \lvert x-x'\rvert^{2\epsilon}\end{equation}
for all $x,x'\in[0,L]$, $\epsilon\leq \dfrac{1}{2}$ and $k\in\mathbb{N}\setminus\{0\}$. In particular \eqref{qqq} holds for $\epsilon=\dfrac{\gamma}{4}$.\\
Then
\begin{equation*}\begin{split} &\mathbb{E} \lvert \nabla( w_\Delta (t,x)-w_\Delta(t,x')) \rvert^2\leq 2^{2-\gamma} C^2 \sum_{k=1}^\infty q_k \sum_{n,m} \lvert\rho_n^k \rho_m^k\rvert \dfrac{\lambda_n'^{\frac{2+\gamma}{4}} \lambda_m'^{\frac{2+\gamma}{4}}}{\lambda_n'+\lambda_m'} \lvert x - x' \rvert^{\gamma}\\
& \leq 2^{2-\gamma} C^2 \bigg(D^2\sum_{k=1}^D q_k \dfrac{\lambda_{D}'^{1+\frac{\gamma}{2}}}{2 \lambda_1'} + (1+2D')^2\sum_{k=D+1}^\infty q_k \dfrac{\lambda_{k+D'}'^{1+\frac{\gamma}{2}}}{2 \lambda_{k-D'}'}\bigg) \lvert x - x' \rvert^{\gamma}\\
& \leq 2^{1-\gamma} \dfrac{C^2}{\lambda_1'} \max_{j\in \{D,1+2D'\}}\{\lambda_{j}'\;j^2\} \bigg(\sum_{k=1}^D q_k \lambda_{D}'^{\frac{\gamma}{2}} + \sum_{k=D+1}^\infty q_k \lambda_{k+D'}'^{\frac{\gamma}{2}}\bigg) \lvert x - x' \rvert^{\gamma}.
\end{split} \end{equation*}
For $t>t'>0$, the inequality $\eqref{ine2}$ is the result of 
\begin{equation*}\begin{split} &\mathbb{E} \lvert \nabla( w_\Delta (t,x)-w_\Delta(t',x)) \rvert^2\\
&\leq C^2 \sum_{k=1}^\infty q_k \sum_{n,m} \lvert\rho_n^k \rho_m^k\rvert \sqrt{\lambda_n'} \sqrt{\lambda_m'} \Bigg(\int_{t'}^t \textnormal{e}^{-(\lambda_n'+\lambda_m')(t-t_1)} \txtd t_1+ \int_0^{t'} \lvert \textnormal{e}^{-\lambda_n'(t-t_1)}- \textnormal{e}^{-\lambda_n'(t'-t_1)}\rvert \lvert \textnormal{e}^{-\lambda_m'(t-t_1)}- \textnormal{e}^{-\lambda_m'(t'-t_1)}\rvert \txtd t_1\Bigg).
\end{split}\end{equation*}
The first term is controlled as follows:
\begin{equation*} C^2 \sum_{k=1}^\infty q_k \sum_{n,m} \lvert\rho_n^k \rho_m^k\rvert \sqrt{\lambda_n'} \sqrt{\lambda_m'} \int_{t'}^t \textnormal{e}^{-(\lambda_n'+\lambda_m')(t-t_1)} \txtd t_1= 
C^2 \sum_{k=1}^\infty q_k \sum_{n,m} \lvert\rho_n^k \rho_m^k\rvert \dfrac{\sqrt{\lambda_n'} \sqrt{\lambda_m'}}{\lambda_n'+\lambda_m'} (1-\textnormal{e}^{-(\lambda_n'+\lambda_m')(t-t')})
\end{equation*}
and, by the fact that for all $\epsilon\in [0,1]$ there exists a $c_\epsilon>0$ that satisfies $\lvert \textnormal{e}^{-t}-\textnormal{e}^{-t'} \rvert\leq c_\epsilon \lvert t-t' \rvert^\epsilon$ for all $t,t'\geq 0$,
\begin{equation*}\begin{split}& C^2 \sum_{k=1}^\infty q_k \sum_{n,m} \lvert\rho_n^k \rho_m^k\rvert \sqrt{\lambda_n'} \sqrt{\lambda_m'} \int_{t'}^t \textnormal{e}^{-(\lambda_n'+\lambda_m')(t-t_1)} \txtd t_1
\leq c_\gamma C^2 \sum_{k=1}^\infty q_k \sum_{n,m} \lvert\rho_n^k \rho_m^k\rvert \dfrac{\sqrt{\lambda_n'} \sqrt{\lambda_m'}}{(\lambda_n'+\lambda_m')^{1-\gamma}} \lvert t-t' \rvert^\gamma\\
& \leq c_\gamma \dfrac{C^2}{2 \lambda_1'} \max_{j\in \{D,1+2D'\}}\{\lambda_{j}'\} \Bigg( \sum_{k=1}^D q_k \sum_{n,m\leq D} (\lambda_n'+\lambda_m')^\gamma+ \sum_{k=D+1}^\infty q_k \sum_{\lvert n-k\rvert\leq D',\lvert m-k\rvert\leq D'} (\lambda_n'+\lambda_m')^\gamma \Bigg) \lvert t-t' \rvert^\gamma\\
&\leq c_\gamma \dfrac{2^{\gamma-1} C^2}{\lambda_1'} \max_{j\in \{D,1+2D'\}}\{\lambda_{j}'\;j^2\} \Bigg( \sum_{k=1}^D q_k (\lambda_{D}')^\gamma+ \sum_{k=D+1}^\infty q_k (\lambda_{k+D'}')^\gamma \Bigg) \lvert t-t' \rvert^\gamma.
\end{split}\end{equation*}
The second term is studied similarly:
\begin{equation*}\begin{split} 
&C^2 \sum_{k=1}^\infty q_k \sum_{n,m} \lvert\rho_n^k \rho_m^k\rvert \sqrt{\lambda_n'} \sqrt{\lambda_m'}  \int_0^{t'} \big( \textnormal{e}^{-\lambda_n'(t-t_1)}- \textnormal{e}^{-\lambda_n'(t'-t_1)}\big) \big( \textnormal{e}^{-\lambda_m'(t-t_1)}- \textnormal{e}^{-\lambda_m'(t'-t_1)}\big) \txtd t_1\\
&\leq C^2 \sum_{k=1}^\infty q_k \sum_{n,m} \lvert\rho_n^k \rho_m^k\rvert \sqrt{\lambda_n'} \sqrt{\lambda_m'}  \\
&\times\int_0^{t'} \big( \textnormal{e}^{-\lambda_n'(t-t_1)-\lambda_m'(t-t_1)}- \textnormal{e}^{-\lambda_n'(t'-t_1)-\lambda_m'(t-t_1)}- \textnormal{e}^{-\lambda_n'(t-t_1)-\lambda_m'(t'-t_1)}+ \textnormal{e}^{-\lambda_n'(t'-t_1)-\lambda_m'(t'-t_1)}\big) \txtd t_1\\
&= C^2 \sum_{k=1}^\infty q_k \sum_{n,m} \lvert\rho_n^k \rho_m^k\rvert \dfrac{\sqrt{\lambda_n'} \sqrt{\lambda_m'}}{\lambda_n'+\lambda_m'} \\
&\times\Big( \textnormal{e}^{-\lambda_n' t+\lambda_n' t' -\lambda_m' t+ \lambda_m' t'} 
- \textnormal{e}^{-\lambda_n' t-\lambda_m' t}
- \textnormal{e}^{-\lambda_m' t+\lambda_m' t'}
+ \textnormal{e}^{-\lambda_n' t'-\lambda_m' t}
- \textnormal{e}^{-\lambda_n' t+\lambda_n' t'}
+ \textnormal{e}^{-\lambda_n' t-\lambda_m' t'}
+ 1
- \textnormal{e}^{-\lambda_n' t'-\lambda_m' t'} \Big)
\end{split}\end{equation*}
\begin{equation*}\begin{split}
&\leq C^2 \sum_{k=1}^\infty q_k \sum_{n,m} \lvert\rho_n^k \rho_m^k\rvert \dfrac{\sqrt{\lambda_n'} \sqrt{\lambda_m'}}{\lambda_n'+\lambda_m'}\\
&\times\Big( \lvert \textnormal{e}^{-\lambda_n' t+\lambda_n' t' -\lambda_m' t+ \lambda_m' t'} 
- \textnormal{e}^{-\lambda_m' t+\lambda_m' t'}\rvert
+\lvert 1 - \textnormal{e}^{-\lambda_n' t+\lambda_n' t'} \rvert
+\lvert \textnormal{e}^{-\lambda_n' t-\lambda_m' t'} - \textnormal{e}^{-\lambda_n' t-\lambda_m' t} \rvert
+ \lvert \textnormal{e}^{-\lambda_n' t'-\lambda_m' t} - \textnormal{e}^{-\lambda_n' t'-\lambda_m' t'} \rvert \Big)\\
&\leq 2 C^2 c_\gamma \sum_{k=1}^\infty q_k \sum_{n,m} \lvert\rho_n^k \rho_m^k\rvert \dfrac{\sqrt{\lambda_n'} \sqrt{\lambda_m'}}{\lambda_n'+\lambda_m'} 
(\lambda_n'^{\frac{\gamma}{2}}+\lambda_m'^{\frac{\gamma}{2}})
\lvert t-t' \rvert^\gamma\\
& \leq 2 C^2 c_\gamma \Bigg( \sum_{k=1}^D q_k \sum_{n,m\leq D} \dfrac{\sqrt{\lambda_n'} \sqrt{\lambda_m'}}{\lambda_n'+\lambda_m'} 
(\lambda_n'^{\frac{\gamma}{2}}+\lambda_m'^{\frac{\gamma}{2}}) + \sum_{k=D+1}^\infty q_k \sum_{\lvert n-k\rvert\leq D',\lvert m-k\rvert\leq D'} \dfrac{\sqrt{\lambda_n'} \sqrt{\lambda_m'}}{\lambda_n'+\lambda_m'} 
(\lambda_n'^{\frac{\gamma}{2}}+\lambda_m'^{\frac{\gamma}{2}}) \Bigg) \lvert t-t' \rvert^\gamma \\
& \leq c_\gamma \dfrac{C^2}{\lambda_1'} \max_{j\in \{D,1+2D'\}}\{\lambda_{j}'\;j^2\}  \Bigg( \sum_{k=1}^D q_k
\lambda_{D}'^{\frac{\gamma}{2}} + \sum_{k=D+1}^\infty q_k 
\lambda_{k+D'}'^{\frac{\gamma}{2}} \Bigg) \lvert t-t' \rvert^\gamma
\end{split}\end{equation*}
for which the symbol $\times$ is used as the product operation.
\end{proof}
Having proven the previous lemma, we can use \cite[Theorem $5.20$]{DaPrato} to state that $w_\Delta$, defined in \eqref{wdelta}, admits a version with continuous paths in $\mathcal{V}$. Through the fact that the Laplacian operator satisfies Theorem \ref{hille-yosida} we can imply with \cite[Theorem $5.27$]{DaPrato} that $w_A:[0,T]\times\mathcal{O}\longrightarrow L^2(\Omega,\mathcal{F},\mathbb{P})$, defined as 
\begin{equation} \label{wA} w_A(t,x):=\sum_{k=1}^\infty \sqrt{q_k} \sum_{n=1}^{\infty} \rho_n^k e_n(x) \int_0^t \textnormal{e}^{-\lambda_n' (t-t_1)} \txtd \beta_k(t_1),\quad \forall t>0\;\;\;\text{and}\;\;\;x\in\mathcal{O},\end{equation}
admits a version continuous in $\mathcal{V}$ as well. On such results, \cite[Theorem $7.13$]{DaPrato} states the existence and uniqueness of the mild solution of \eqref{mainsyst} in $\mathcal{C}([0,+\infty);\mathcal{V})$ if $u_0\in \mathcal{V}$. Finally, \cite[Theorem $7.16$]{DaPrato} leads to the $\mathbb{P}-a.s.$ existence of a unique mild solution of $\eqref{mainsyst}$
\begin{equation*} u\in L^2(\Omega\times(0,T);\mathcal{V})\cap L^2(\Omega;\mathcal{C}([0,T];\mathcal{H}))\end{equation*}
for all $T>0$.
\paragraph{Remark.}
In this paper we use Lemma \ref{contlm} for $D'=0$, sufficient to have great freedom on a finite number of eigenfunctions of $Q$.\\ \medskip
The lemma can be generalized in different aspects. The choice of the space on which the continuity of the solution of $\eqref{mainsyst}$ is wanted depends on $\gamma$. For instance, assuming $-1<\gamma<0$ the continuity of the paths of $w_\Delta$ in $\mathcal{H}$ can be proven in an equivalent manner. We note also that, tracking the same steps of the proof of Lemma \ref{contlm}, the continuity in $\mathcal{V}$ of $w_A:[0,T]\times\mathcal{O}\longrightarrow L^2(\Omega,\mathcal{F},\mathbb{P})$, defined in \eqref{wA},
can be proven in the case in which the properties \ref{propertyB1}, \ref{propertyB2} and \ref{propertyB3} were assumed in relation with the eigenfunctions and eigenvalues of $A$, instead of those of $\Delta$, i.e.
\begin{enumerate}[label=\textnormal{(B\arabic*')}]
    \item \label{propertyB1'} for all $0<j_2\leq D$, $b_{j_2}=\sum_{j_1\leq D} \rho_{j_1}^{j_2} e_{j_1}$,
    \item \label{propertyB2'} for all $D<j_2$, $b_{j_2}=\sum_{\{\lvert j_1-j_2\rvert\leq D', j_1>D\}} \rho_{j_1}^{j_2} e_{j_1}$,
    \item \label{propertyB3'} there exists $\gamma>0$ for which $\sum_{j=1}^\infty q_j \lambda_{j+D'}^\gamma <+\infty$.
\end{enumerate}
This is possible due to $\eqref{sync}$. \\
Lastly, assuming instead of the properties \ref{propertyB1} and \ref{propertyB2}
\begin{equation*}
    \lvert\lvert e_k-b_k\rvert\rvert_\mathcal{H}=O\Big(\dfrac{1}{k}\Big)
\end{equation*}
and $Q$ to be only trace classe, the continuity of $w_A$ in $\mathcal{V}^s$ can be shown for $0<s<1$.

\section{Appendix}
Set the Hilbert space $X$ with basis $\{\phi_j\}_{j\in\mathbb{N}\setminus\{0\}}$, scalar product $\langle \cdot,\cdot\rangle_X$ and $f_1,f_2\in X$. By definition of basis, we know that the sequences $\{f_n^k:=\sum_{j=1}^n \langle f_k, \phi_j\rangle_X \phi_j\}_{n\in\mathbb{N}\setminus\{0\}}$ converge strongly in the norm defined by the scalar product, $\lvert\lvert\cdot\rvert\rvert_X$, to $f_k=\sum_{j=1}^\infty \langle f_k, \phi_j\rangle_X \phi_j$, for $k\in\{1,2\}$. It is then well known and easy to prove the following consequence:
\begin{equation}\label{AC1}\begin{split}
&\lvert\langle f_n^1,f_n^2\rangle_X-\langle f_1,f_2\rangle_X\rvert\leq\lvert\langle f_n^1-f_1,f_n^2\rangle_X\rvert+\lvert\langle f_1,f_n^2-f_2\rangle_X\rvert\\
&\leq\lvert\lvert f_n^1-f_1 \rvert\rvert_X \; \lvert\lvert f_n^2 \rvert\rvert_X +\lvert\lvert f_n^2-f_2 \rvert\rvert_X \; \lvert\lvert f_1 \rvert\rvert_X\leq \max\Big\{\lvert\lvert f_1 \rvert\rvert_X,\lvert\lvert f_2 \rvert\rvert_X\Big\}\Big(\lvert\lvert f_n^1-f_1 \rvert\rvert_X+\lvert\lvert f_n^2-f_2 \rvert\rvert_X\Big)\underset{n\rightarrow\infty}{\longrightarrow}0\;.
\end{split}\end{equation}
By definition
\begin{equation}\label{AC2}\begin{split}
&\langle f_n^1,f_n^2\rangle_X=\Big\langle\sum_{j_1=1}^n \langle f_1,\phi_{j_1}\rangle_X \phi_{j_1}, \sum_{j_2=1}^n \langle f_2,\phi_{j_2}\rangle_X \phi_{j_2}\Big\rangle_X\\
&=\sum_{j_1=1}^n \sum_{j_2=1}^n \langle f_1,\phi_{j_1}\rangle_X \langle f_2,\phi_{j_2}\rangle_X \langle \phi_{j_1},\phi_{j_2}\rangle_X=\sum_{j=1}^n \langle f_1,\phi_j\rangle_X \langle f_2,\phi_j\rangle_X\;.
\end{split}\end{equation}
Combining $\eqref{AC1}$ and $\eqref{AC2}$ we obtain
\begin{equation*}
\langle f_1,f_2\rangle_X=\sum_{j=1}^\infty \langle f_1,\phi_j\rangle_X \langle f_2,\phi_j\rangle_X\;.
\end{equation*}

\newpage
\printbibliography

@article{CI,
  title={A bifurcation problem for a nonlinear partial differential equation of parabolic type},
  author={Chafee, Nathaniel and Infante, Ettore Ferrari},
  journal={Appl. Anal.},
  volume={4},
  number={1},
  pages={17--37},
  year={1974},
  publisher={Taylor \& Francis}
}

@ARTICLE{GnannKuehnPein,
   author = "M. Gnann and C. Kuehn and A. Pein",
   title = "Towards sample path estimates for fast-slow {SPDEs}",
   journal = "Euro. J. Appl. Math.",
	 volume = 30,
	 number = 5,
   pages = {1004--1024},
   year = 2019,
   }

@book{allee1949principles,
  title={Principles of animal ecology.},
  author={Allee, Warder Clyde and Park, Orlando and Emerson, Alfred E and Park, Thomas and Schmidt, Karl P and others},
  number={Edn 1},
  year={1949},
  publisher={WB Saundere Co. Ltd.}
}

@article{allen1972ground,
  title={Ground state structures in ordered binary alloys with second neighbor interactions},
  author={Allen, Samuel Miller and Cahn, John W},
  journal={Acta Mater.},
  volume={20},
  number={3},
  pages={423--433},
  year={1972},
  publisher={Elsevier}
}

@article{fitzhugh1955mathematical,
  title={Mathematical models of threshold phenomena in the nerve membrane},
  author={FitzHugh, Richard},
  journal={Bull. math. biophys.},
  volume={17},
  number={4},
  pages={257--278},
  year={1955},
  publisher={Springer}
}

@article{stommel1961thermohaline,
  title={Thermohaline convection with two stable regimes of flow},
  author={Stommel, Henry},
  journal={Tellus},
  volume={13},
  number={2},
  pages={224--230},
  year={1961},
  publisher={Wiley Online Library}
}

@book{Henry81,
  title={Geometric theory of semilinear parabolic equations},
  author={Henry, Daniel},
  volume={840},
  year={2006},
  publisher={Springer}
}

@article{CF,
  title={Additive noise destroys a pitchfork bifurcation},
  author={Crauel, Hans and Flandoli, Franco},
  journal={J. Dynam. Differential Equations},
  volume={10},
  number={2},
  pages={259--274},
  year={1998},
  publisher={Springer}
}

@article{Caraballo,
  title={The effect of noise on the Chafee-Infante equation: a nonlinear case study},
  author={Caraballo, Tom{\'a}s and Crauel, Hans and Langa, Jos{\'e} and Robinson, James},
  journal={Proc. Amer. Math. Soc.},
  volume={135},
  number={2},
  pages={373--382},
  year={2007}
}

@article{callaway,
  title={The dichotomy spectrum for random dynamical systems and pitchfork bifurcations with additive noise},
  author={Callaway, Mark and Doan, Thai Son and Lamb, Jeroen SW and Rasmussen, Martin},
  journal={Ann. Inst. Henri Poincar{\'e} Probab.},
  volume={53},
  number={4},
  pages={1548--1574},
  year={2017},
  organization={Institut Henri Poincar{\'e}}
}

@misc{https://doi.org/10.48550/arxiv.2108.11073,
  doi = {10.48550/ARXIV.2108.11073},
  
  url = {https://arxiv.org/abs/2108.11073},
  
  author = {Blumenthal, Alex and Engel, Maximilian and Neamtu, Alexandra},
  
  keywords = {Probability (math.PR), Dynamical Systems (math.DS), FOS: Mathematics, FOS: Mathematics, 60H15, 60H50, 37L55, 37H20},
  
  title = {On the pitchfork bifurcation for the Chafee-Infante equation with additive noise},
  
  publisher = {arXiv},
  
  year = {2021},
  
  copyright = {arXiv.org perpetual, non-exclusive license}
}

@book{DaPrato,
  title={Stochastic equations in infinite dimensions},
  author={Da Prato, Giuseppe and Zabczyk, Jerzy},
  year={2014},
  publisher={Cambridge university press}
}

@article{debussche,
  title={Hausdorff dimension of a random invariant set},
  author={Debussche, Arnaud},
  journal={J. Math. Pures Appl.},
  volume={77},
  number={10},
  pages={967--988},
  year={1998},
  publisher={Elsevier}
}

@article{chueshov,
  title={Non-random invariant sets for some systems of parabolic stochastic partial differential equations},
  author={Chueshov, ID and Vuillermot, P-A},
  journal={Stoch. Anal. Appl.},
  volume={22},
  number={6},
  pages={1421--1486},
  year={2004},
  publisher={Taylor \& Francis}
}

@article{twardowska,
  title={An approximation theorem of Wong-Zakai type for nonlinear stochastic partial differential equations},
  author={Twardowska, Krystyna},
  journal={Stoch. Anal. Appl.},
  volume={13},
  number={5},
  pages={601--626},
  year={1995},
  publisher={Taylor \& Francis}
}

@article{early,
  title={Early-warning signs for pattern-formation in stochastic partial differential equations},
  author={Gowda, Karna and Kuehn, Christian},
  journal={Commun. Nonlinear Sci. Numer. Simul.},
  volume={22},
  number={1-3},
  pages={55--69},
  year={2015},
  publisher={Elsevier}
}

@article{chueshov2001inertial,
  title={Inertial manifolds and forms for stochastically perturbed retarded semilinear parabolic equations},
  author={Chueshov, Igor D and Scheutzow, M},
  journal={J. Dynam. Differential Equations},
  volume={13},
  number={2},
  pages={355--380},
  year={2001},
  publisher={Springer}
}

@article{caraballo2000stability,
  title={Stability and random attractors for a reaction-diffusion equation with multiplicative noise},
  author={Caraballo, Tom{\'a}s and Langa, Jos{\'e} A and Robinson, James C},
  journal={Discrete Contin. Dyn. Syst.},
  volume={6},
  number={4},
  pages={875},
  year={2000},
  publisher={American Institute of Mathematical Sciences}
}

@book{cerrai,
  title={Second order PDE’s in finite and infinite dimension: a probabilistic approach},
  author={Cerrai, Sandra},
  year={2001},
  publisher={Springer}
}

@book{da1996ergodicity,
  title={Ergodicity for infinite dimensional systems},
  author={Da Prato, Giuseppe and Zabczyk, Jerzy and Zabczyk, J},
  volume={229},
  year={1996},
  publisher={Cambridge University Press}
}

@article{khas1960ergodic,
  title={Ergodic properties of recurrent diffusion processes and stabilization of the solution to the Cauchy problem for parabolic equations},
  author={{Khas’minskii}, Rafail Z},
  journal={Theory Probab. Appl.},
  volume={5},
  number={2},
  pages={179--196},
  year={1960},
  publisher={SIAM}
}

@article{crauel2000white,
  title={White noise eliminates instability},
  author={Crauel, Hans},
  journal={Arch. Math.},
  volume={75},
  number={6},
  pages={472--480},
  year={2000},
  publisher={Springer}
}

@article{agmon1962eigenfunctions,
  title={On the eigenfunctions and on the eigenvalues of general elliptic boundary value problems},
  author={Agmon, Shmuel},
  journal={Comm. Pure Appl. Math.},
  volume={15},
  number={2},
  pages={119--147},
  year={1962},
  publisher={Wiley Online Library}
}

@book{da2004functional,
  title={Functional analytic methods for evolution equations},
  author={Da Prato, Giuseppe and Kunstmann, Peer Christian and Lasiecka, Irena and Lunardi, Alessandra and Schnaubelt, Roland and Weis, Lutz},
  year={2004},
  publisher={Springer Science \& Business Media}
}

@book{edmunds2018elliptic,
  title={Elliptic differential operators and spectral analysis},
  author={Edmunds, David Eric and Evans, W Desmond},
  year={2018},
  publisher={Springer}
}

@book{debussche2013dynamics,
  title={The dynamics of nonlinear reaction-diffusion equations with small L{\'e}vy noise},
  author={Debussche, Arnaud and H{\"o}gele, Michael and Imkeller, Peter},
  volume={2085},
  year={2013},
  publisher={Springer}
}

@book{da2004kolmogorov,
  title={Kolmogorov equations for stochastic PDEs},
  author={Da Prato, Giuseppe},
  year={2004},
  publisher={Springer Science \& Business Media}
}

@book{lord2014introduction,
  title={An introduction to computational stochastic PDEs},
  author={Lord, Gabriel J and Powell, Catherine E and Shardlow, Tony},
  volume={50},
  year={2014},
  publisher={Cambridge University Press}
}

@article{crandall1971bifurcation,
  title={Bifurcation from simple eigenvalues},
  author={Crandall, Michael G and Rabinowitz, Paul H},
  journal={J. Funct. Anal.},
  volume={8},
  number={2},
  pages={321--340},
  year={1971},
  publisher={Elsevier}
}

@book{kuehn2019pde,
  title={PDE Dynamics: An Introduction},
  author={Kuehn, Christian},
  volume={23},
  year={2019},
  publisher={SIAM}
}

@book{poschel1987inverse,
  title={Inverse spectral theory},
  author={Poschel, Jurgen},
  year={1987},
  publisher={Academic Press}
}

@book{berglund2006noise,
  title={Noise-induced phenomena in slow-fast dynamical systems: a sample-paths approach},
  author={Berglund, Nils and Gentz, Barbara},
  year={2006},
  publisher={Springer Science \& Business Media}
}

@article{berglund2022stochastic,
  title={Stochastic resonance in stochastic PDEs},
  author={Berglund, Nils and Nader, Rita},
  journal={Stoch. Partial Differ. Equ.},
  pages={1--40},
  year={2022},
  publisher={Springer}
}

@article{berglund2002pathwise,
  title={Pathwise description of dynamic pitchfork bifurcations with additive noise},
  author={Berglund, Nils and Gentz, Barbara},
  journal={Probab. Theory Related Fields},
  volume={122},
  number={3},
  pages={341--388},
  year={2002},
  publisher={Springer}
}

@article{berglund2013sharp,
  title={Sharp estimates for metastable lifetimes in parabolic SPDEs: Kramers' law and beyond},
  author={Berglund, Nils and Gentz, Barbara},
  journal={Electron. J. Probab.},
  volume={18},
  pages={1--58},
  year={2013},
  publisher={Institute of Mathematical Statistics and Bernoulli Society}
}

@book{lunardi2018interpolation,
  title={Interpolation theory},
  author={Lunardi, Alessandra},
  volume={16},
  year={2018},
  publisher={Springer}
}

@article{arnold1995random,
  title={Random dynamical systems},
  author={Arnold, Ludwig},
  journal={Dyn. Syst.},
  pages={1--43},
  year={1995},
  publisher={Springer}
}

@article{kuehn2011mathematical,
  title={A mathematical framework for critical transitions: Bifurcations, fast--slow systems and stochastic dynamics},
  author={Kuehn, Christian},
  journal={Phys. D: Nonlinear Phenom.},
  volume={240},
  number={12},
  pages={1020--1035},
  year={2011},
  publisher={Elsevier}
}

@article{kuehn2013mathematical,
  title={A mathematical framework for critical transitions: normal forms, variance and applications},
  author={Kuehn, Christian},
  journal={J. Nonlinear Sci.},
  volume={23},
  number={3},
  pages={457--510},
  year={2013},
  publisher={Springer}
}

@article{kuehn2019scaling,
  title={Scaling laws and warning signs for bifurcations of SPDEs},
  author={Kuehn, Christian and Romano, Francesco},
  journal={European J. Appl. Math.},
  volume={30},
  number={5},
  pages={853--868},
  year={2019},
  publisher={Cambridge University Press}
}

@article{berglund2012hunting,
  title={Hunting French ducks in a noisy environment},
  author={Berglund, Nils and Gentz, Barbara and Kuehn, Christian},
  journal={J. Differ. Equ.},
  volume={252},
  number={9},
  pages={4786--4841},
  year={2012},
  publisher={Elsevier}
}

@article{berglund2015random,
  title={From random Poincar{\'e} maps to stochastic mixed-mode-oscillation patterns},
  author={Berglund, Nils and Gentz, Barbara and Kuehn, Christian},
  journal={J. Dynam. Differential Equations},
  volume={27},
  number={1},
  pages={83--136},
  year={2015},
  publisher={Springer}
}

@book{driscoll2014chebfun,
  title={Chebfun guide},
  author={Driscoll, Tobin A and Hale, Nicholas and Trefethen, Lloyd N},
  year={2014},
  publisher={Pafnuty Publications, Oxford}
}

\end{document}